\titleformat*{\section}{\large\bfseries}
\newcommand{\raisemath}[1]{\mathpalette{\raisem@th{#1}}}
\newcommand{\raisem@th}[3]{\raisebox{#1}{$#2#3$}}
\theoremstyle{plain}
\newtheorem{theorem}{Theorem}[subsection]
\newtheorem{proposition}[theorem]{Proposition}
\newtheorem{lemma}[theorem]{Lemma}
\newtheorem{corollary}[theorem]{Corollary}
\theoremstyle{definition}
\newtheorem{definition}[theorem]{Definition}
\newtheorem{notation}[theorem]{Notation}
\newtheorem*{problem*}{Problem}
\newtheorem{problem}[theorem]{Problem}
\newtheorem{remark}[theorem]{Remark}
\newtheorem{example}[theorem]{Example}
\newtheorem{question}[theorem]{Question}
\newenvironment{customtheorem}[1]
  {\innercustomtheorem}
  {\endinnercustomtheorem}
\DeclarePairedDelimiter\ceil{\lceil}{\rceil}
\DeclarePairedDelimiter\floor{\lfloor}{\rfloor}
\DeclareMathOperator{\supp}{supp}
\DeclareMathOperator{\rank}{R}
\DeclareMathOperator{\subrank}{Q}
\DeclareMathOperator{\bordersubrank}{\underline{Q}}
\DeclareMathAccent{\wtilde}{\mathord}{largesymbols}{"65}
\DeclareMathOperator{\borderrank}{\underline{R}}
\DeclareMathOperator{\tens}{T}
\newcommand{\Oh}{\mathcal{O}}
\newcommand{\CC}{\mathbb{C}}
\newcommand{\QQ}{\mathbb{Q}}
\newcommand{\NN}{\mathbb{N}}
\newcommand{\GL}{\mathrm{GL}}
\newcommand{\ZZ}{\mathbb{Z}}
\newcommand{\RR}{\mathbb{R}}
\newcommand{\E}{\mathbb{E}}
\DeclareMathOperator{\type}{type}
\DeclareMathOperator{\Span}{Span}
\newcommand{\defin}[1]{\emph{#1}}
\newcommand{\CW}{\mathrm{CW}}
\newcommand{\optset}[1]{\mathscr{#1}}
\newcommand{\subomega}{\mathrm{q}}
\newcommand{\combsubomega}{\mathrm{q}_\mathrm{M}}
\newcommand{\combsubrank}{\subrank_\mathrm{M}}
\DeclareMathOperator{\bordercombsubrank}{\underline{\mathrm{Q}}_{\mathrm{M}}}
\newcommand{\combomega}{\omega_\mathrm{M}}
\newcommand{\badpairs}{\mathcal{C}}
\newcommand{\combgeq}{\geq_\mathrm{M}}
\newcommand{\combleq}{\leq_\mathrm{M}}
\newcommand{\combdegengeq}{\unrhd_\mathrm{M}}
\newcommand{\combdegenleq}{\unlhd_\mathrm{M}}
\newcommand{\degengeq}{\unrhd}
\newcommand{\degenleq}{\unlhd}
\newcommand{\gtens}[1]{\tens\Bigl(\hspace{0.1ex}#1\hspace{0.1ex}\Bigr)}
\newcommand{\gtensq}[1]{\tens_q\Bigl(\hspace{0.1ex}#1\hspace{0.1ex}\Bigr)}
\newcommand{\gtensf}[2]{\tens_{\!#2}\Bigl(\hspace{0.1ex}#1\hspace{0.1ex}\Bigr)}
\newcommand{\marg}[1]{\mathbf{#1}}
\begin{document}

\vspace*{1em}
\begin{center}
\Large\textbf{Asymptotic tensor rank of graph tensors:\\ beyond matrix multiplication}\par%
\vspace{1em}
\large Matthias Christandl\footnote{QMATH, Department of Mathematical Sciences, University of Copenhagen, Universitetsparken 5, 2100 Copenhagen, Denmark. Email: christandl@math.ku.dk}\!, Péter Vrana\footnote{Department of Geometry, Budapest University of Technology and Economics, Egry József~u.~1., 1111 Budapest, Hungary. Email: vranap@math.bme.hu} and Jeroen Zuiddam\footnote{QuSoft, CWI Amsterdam and University of Amsterdam, Science Park 123, 1098 XG Amsterdam, Netherlands. Email: j.zuiddam@cwi.nl}\par
\end{center}
\vspace{0.5em}

\begin{abstract}
We present an upper bound on the exponent of the asymptotic behaviour of the tensor rank
of a family of tensors defined by the complete graph on $k$ vertices. For~$k\geq4$, we show that the exponent per edge is at most 0.77, %
outperforming the best known upper bound on the exponent per edge for matrix multiplication ($k=3$), which is approximately~0.79.
We raise the question whether for some $k$ the exponent per edge can be below $2/3$, i.e.\
can outperform matrix multiplication even if the matrix multiplication exponent equals~2.
In order to obtain our results, we generalise to higher order tensors a result by Strassen on the asymptotic subrank of tight tensors and a result by Coppersmith and Winograd on the asymptotic rank of matrix multiplication. Our results have applications in entanglement theory and communication complexity.
\end{abstract}

\section{Introduction}

A famous open problem in algebraic complexity theory is the problem of determining the tensor rank of large tensor powers of the $2\times 2$ matrix multiplication tensor. Let $V_1, \ldots, V_k$ be finite-dimensional complex vector spaces and let $\phi \in V_1 \otimes \cdots \otimes V_k$ be a $k$-tensor. The tensor rank $\rank(\phi)$ of $\phi$ is the smallest number~$r$ such that $\phi$ can be written as a sum of $r$ simple tensors $v_1 \otimes \cdots \otimes v_k \in V_1 \otimes \cdots \otimes V_k$. We define the $N$th tensor power $\phi^{\otimes N}$ as the $k$-tensor obtained by taking the tensor product of $N$ copies of $\phi$ and grouping such that $\phi \in (V_1^{\otimes N}) \otimes \cdots \otimes (V_k^{\otimes N})$.
For any $n$, let $b_1, \ldots, b_n$ denote the standard basis of $\CC^n$.
The $2\times 2$ matrix multiplication tensor is the 3-tensor
\[
\sum_{\mathclap{i\in \{0,1\}^3}} (b_{i_1} \otimes b_{i_2}) \otimes  (b_{i_2} \otimes b_{i_3}) \otimes  (b_{i_3} \otimes b_{i_1}) \in (\CC^2 \otimes \CC^2)^{\otimes 3}.
\]
This paper is motivated by the study of the tensor rank of powers of tensors that are generalizations of the matrix multiplication tensor.  %
Consider a graph with two vertices connected by a single edge. We define the corresponding 2-tensor as
\def\grsizex{1.2em}
\def\grsize{1.2em}
\[
\tens\bigl(\pbox{4em}{
\begin{tikzpicture}[line width=0.2mm, vertex/.style={
    circle,
    fill=white,
    draw,
    outer sep=0pt,
    inner sep=0.1em}, x=\grsizex, y=\grsizex]
    \path[coordinate] (0,0)  coordinate(A) (1,0) coordinate(B);
	\draw [line width=0.2mm]  (A) node [vertex] {} -- (B) node [vertex] {};
\end{tikzpicture}}\bigr) = \sum_{\mathclap{i\in\{0,1\}}} b_i \otimes b_i\, \in \CC^2 \otimes \CC^2.
\]
Let $G$ be any graph. Then we define $\tens(G)$ to be the $|V|$-tensor obtained by taking the tensor product of the tensors corresponding to the edges of~$G$, grouping corresponding vertices together (the full definition is in \cref{tensdef}). For example, for the complete graph on four vertices $K_4$ we have
\newcommand{\smallotimes}{\hspace{-0.3em}\otimes\!}
\begin{align*}
\gtens{\pbox{2em}{
\begin{tikzpicture}[line width=0.2mm, vertex/.style={
    circle,
    fill=white,
    draw,
    outer sep=0pt,
    inner sep=0.1em}, x=\grsize, y=\grsize]
    \path[coordinate] (0,0)  coordinate(A)
                ( 0,1) coordinate(B)
                ( 1,1) coordinate(C)
                ( 1,0) coordinate(D);
	\draw [line width=0.2mm] (A) -- (C);
	\draw [line width=0.2mm] (B) -- (D);
	\draw [line width=0.2mm]  (A) node [vertex] {} -- (B) node [vertex] {} -- (C) node [vertex] {} -- (D) node [vertex] {} -- (A);
\end{tikzpicture}
}} &=  \gtens{\pbox{1.9em}{
\begin{tikzpicture}[line width=0.2mm, vertex/.style={
    circle,
    fill=white,
    draw,
    outer sep=0pt,
    inner sep=0.1em}, x=\grsize, y=\grsize]
    \path[coordinate] (0,0)  coordinate(A)
                ( 0,1) coordinate(B)
                ( 1,1) coordinate(C)
                ( 1,0) coordinate(D);
	\draw (A) node [vertex] {};
	\draw (B) node [vertex] {};
	\draw (C) node [vertex] {};
	\draw (D) node [vertex] {};
	\draw [line width=0.2mm]  (A) node [vertex] {} -- (B) node [vertex] {};
\end{tikzpicture}
}}
\otimes
\gtens{\pbox{1.9em}{
\begin{tikzpicture}[line width=0.2mm, vertex/.style={
    circle,
    fill=white,
    draw,
    outer sep=0pt,
    inner sep=0.1em}, x=\grsize, y=\grsize]
    \path[coordinate] (0,0)  coordinate(A)
                ( 0,1) coordinate(B)
                ( 1,1) coordinate(C)
                ( 1,0) coordinate(D);
	\draw (A) node [vertex] {};
	\draw (B) node [vertex] {};
	\draw (C) node [vertex] {};
	\draw (D) node [vertex] {};
	\draw [line width=0.2mm]  (A) node [vertex] {} -- (C) node [vertex] {};
\end{tikzpicture}
}}
\otimes
\gtens{\pbox{1.9em}{
\begin{tikzpicture}[line width=0.2mm, vertex/.style={
    circle,
    fill=white,
    draw,
    outer sep=0pt,
    inner sep=0.1em}, x=\grsize, y=\grsize]
    \path[coordinate] (0,0)  coordinate(A)
                ( 0,1) coordinate(B)
                ( 1,1) coordinate(C)
                ( 1,0) coordinate(D);
	\draw (A) node [vertex] {};
	\draw (B) node [vertex] {};
	\draw (C) node [vertex] {};
	\draw (D) node [vertex] {};
	\draw [line width=0.2mm]  (A) node [vertex] {} -- (D) node [vertex] {};
\end{tikzpicture}
}} \otimes
\gtens{\pbox{1.9em}{
\begin{tikzpicture}[line width=0.2mm, vertex/.style={
    circle,
    fill=white,
    draw,
    outer sep=0pt,
    inner sep=0.1em}, x=\grsize, y=\grsize]
    \path[coordinate] (0,0)  coordinate(A)
                ( 0,1) coordinate(B)
                ( 1,1) coordinate(C)
                ( 1,0) coordinate(D);
	\draw (A) node [vertex] {};
	\draw (B) node [vertex] {};
	\draw (C) node [vertex] {};
	\draw (D) node [vertex] {};
	\draw [line width=0.2mm]  (B) node [vertex] {} -- (C) node [vertex] {};
\end{tikzpicture}
}}
\otimes
\gtens{\pbox{1.9em}{
\begin{tikzpicture}[line width=0.2mm, vertex/.style={
    circle,
    fill=white,
    draw,
    outer sep=0pt,
    inner sep=0.1em}, x=\grsize, y=\grsize]
    \path[coordinate] (0,0)  coordinate(A)
                ( 0,1) coordinate(B)
                ( 1,1) coordinate(C)
                ( 1,0) coordinate(D);
	\draw (A) node [vertex] {};
	\draw (B) node [vertex] {};
	\draw (C) node [vertex] {};
	\draw (D) node [vertex] {};
	\draw [line width=0.2mm]  (B) node [vertex] {} -- (D) node [vertex] {};
\end{tikzpicture}
}}
\otimes
\gtens{\pbox{1.9em}{
\begin{tikzpicture}[line width=0.2mm, vertex/.style={
    circle,
    fill=white,
    draw,
    outer sep=0pt,
    inner sep=0.1em}, x=\grsize, y=\grsize]
    \path[coordinate] (0,0)  coordinate(A)
                ( 0,1) coordinate(B)
                ( 1,1) coordinate(C)
                ( 1,0) coordinate(D);
	\draw (A) node [vertex] {};
	\draw (B) node [vertex] {};
	\draw (C) node [vertex] {};
	\draw (D) node [vertex] {};
	\draw [line width=0.2mm]  (C) node [vertex] {} -- (D) node [vertex] {};
\end{tikzpicture}
}}\\
&= \sum_{\mathclap{i\in\{0,1\}^6}} (b_{i_1}\smallotimes b_{i_4}\smallotimes b_{i_5}) \otimes (b_{i_2}\smallotimes b_{i_4}\smallotimes b_{i_6}) \otimes (b_{i_3}\smallotimes b_{i_5}\smallotimes b_{i_6}) \otimes (b_{i_1}\smallotimes b_{i_2}\smallotimes b_{i_3})
\end{align*}
living in $(\CC^8)^{\otimes 4}$. We can ignore the dependence of this tensor on the order of the edges, since tensor rank is invariant under this choice. Let $K_k$ be the complete graph on $k$ vertices. The $2\times 2$ matrix multiplication tensor is the tensor $\tens(K_3)$.

The main result of this paper is an upper bound on the tensor rank of large tensor powers of $\tens(K_k)$: for any $k\geq 4$ and large $N$,
\begin{equation}\label{maineq}
\rank(\tens(K_k)^{\otimes N}) \leq 2^{0.772943 \binom{k}{2} N + o(N)}.
\end{equation}
We say that the \emph{exponent} $\omega(\tens(K_k))$ is at most $0.772943 \binom{k}{2}$ and we say that the \emph{exponent per edge} $\tau(\tens(K_k))$ is at most $0.772943$.
This improves, for $k\geq 4$, the bound $\tau(\tens(K_k)) \leq 0.790955$ that can be derived from the well-known upper bound of Le~Gall \cite{le2014powers} on the exponent of matrix multiplication $\omega\coloneqq\omega(\tens(K_3))$. Note that $\tau(\tens(K_3)) = \omega/3$.

By a ``covering argument'' we can show that $\tau(\tens(K_k))$ is nonincreasing when~$k$ increases (\cref{peredge}).
On the other hand, a standard ``flattening argument'' (see \cref{flattening}) yields the lower bound $\tau(\tens(K_k)) \geq \tfrac12k/{(k-1)}$ if $k$ is even and $\tau(\tens(K_k)) \geq \tfrac12(k+1)/k$ if $k$ is odd. As a consequence, if the exponent of matrix multiplication $\omega$ equals~$2$, then $\tau(\tens(K_4)) = \tau(\tens(K_3)) = \tfrac23$. We raise the following question: is there a $k\geq 5$ such that $\tau(\tens(K_k)) < \tfrac23$? More open questions are discussed in \cref{subsecquestions}.

Our method to prove \eqref{maineq} is a generalization of a method of Strassen, and Coppersmith and Winograd for obtaining upper bounds on the exponent of the matrix multiplication tensor.
To use the generalized Coppersmith--Winograd method we have to get a handle on the monomial subrank of tensor powers of yet another type of combinatorially defined tensors. %
The definition of rank given above is equivalent to saying that the rank $\rank(\phi)$ of a tensor $\phi \in V_1\otimes \cdots \otimes V_k$ is the smallest number $r$ for which there exist linear maps $A_1: \CC^r \to V_1$, \ldots, $A_k : \CC^r \to V_k$ such that $\phi = (A_1\otimes \cdots \otimes A_k) \tens_r(k)$, where $\tens_r(k) = \sum_{i=1}^r (b_i)^{\otimes k}$ is the rank-$r$ unit $k$-tensor.
The subrank $Q(\phi)$ of~$\phi$ is the largest number $s$ for which there exist linear maps $A_1: V_1 \to \CC^s$, \ldots, $A_k: V_k \to \CC^s$ such that $\tens_s(k) = (A_1\otimes \cdots \otimes A_k)\phi$. In fact, we need a slightly restricted notion. A monomial matrix is a matrix such that any row or column has at most one nonzero entry. %
Let $\phi\in V_1 \otimes \cdots \otimes V_k$ be a tensor in a fixed basis. The \emph{monomial} subrank $\combsubrank(\phi)$ is the largest number $s$ for which there exist monomial matrices $A_1, \ldots, A_k$ such that $\tens_s(k) = (A_1\otimes \cdots \otimes A_k) \phi$. We are interested in lower bounding $\combsubrank(\phi^{\otimes N})$ for large $N$.
In particular, in the course of proving \eqref{maineq} we face the problem of computing $\combsubrank(D_{(2,2)}^{\otimes N})$, where %
\begin{alignat*}{4}
D_{(2,2)} ={}&\,  b_1\otimes b_1\otimes b_2\otimes b_2
 {}+{}&\, b_1\otimes b_2\otimes b_1\otimes b_2
 {}+{}&\, b_1\otimes b_2\otimes b_2\otimes b_1\\
 {}+{}&\, b_2\otimes b_1\otimes b_1\otimes b_2
 {}+{}&\, b_2\otimes b_1\otimes b_2\otimes b_1
 {}+{}&\, b_2\otimes b_2\otimes b_1\otimes b_1
\end{alignat*}
is the weight-$(2,2)$ Dicke tensor.

Our second result solves this problem. Namely, we prove 
a general asymptotic lower bound on the monomial subrank of tensor powers of so-called tight tensors (\cref{main}). These are tensors $\phi \in V_1 \otimes \cdots \otimes V_k$ for which there is a choice of bases $B_1, \ldots, B_k$ for $V_1, \ldots, V_k$ respectively and injective maps $\alpha_1:B_1\to \ZZ$, \ldots, $\alpha_k : B_k \to \ZZ$ such that
\[
\forall\, (b_1, \ldots, b_k)\in \supp_B(\phi)\quad \alpha_1(b_1) + \cdots + \alpha_k(b_k) = 0.
\]
For example, applied to the tensor $D_{(2,2)}$, which is a tight tensor, our result yields the  monomial subrank $\combsubrank(D_{(2,2)}^{\otimes N}) = 2^{N - o(N)}$, which is asymptotically optimal. We say that the \emph{monomial subexponent} $\combsubomega(D_{(2,2)})$ equals 1. This solves a conjecture posed in \cite{vrana2015asymptotic} for the special case $D_{(2,2)}$.

In quantum information theoretical terms, the tensor $\tens(K_k)$ encodes a $k$-partite quantum state in which each two systems share an Einstein--Podolski--Rosen (EPR) pair. The tensor $\tens_2(k)$ encodes the $k$-partite Greenberger--Horne--Zeilinger (GHZ) state. If the rank of $\tens(K_k)$ is $r$, then $\ceil{\log_2 r}$ copies of the state $\tens_2(k)$ are needed to generate $\tens(K_k)$ by stochastic local operations and classical communication (SLOCC). See \cite{dur2000three} for a discussion of SLOCC. Asymptotically, %
we can generate $N$ copies of $\tens(K_k)$ using $\omega(\tens(K_k))N + o(N)$ copies of $\tens(k)$.

In the next subsection we discuss preliminary definitions and known results. After that we state our  results and discuss open questions.

\subsection{Preliminaries}

Let $U_1,\ldots, U_k$ and $V_1, \ldots, V_k$ be complex finite-dimensional vector spaces. For any number $n\in \NN$, define the set $[n] \coloneqq \{1,2,\ldots, n\}$. %
All our graphs will be simple graphs, that is, they are unweighted, undirected, containing no self-loops or multiple edges.

We first define the two families of tensors that play an important role in this paper, namely the tensors $\tens(G)$ with $G$ a graph and the tensors $D_\lambda$ with $\lambda$ a partition. We start with $\tens(G)$.

\begin{definition}\label{tensdef}
Let $G = (V,E)$ be a graph and let $n$ be a natural number. Let $b_1, \ldots, b_n$ be the standard basis of $\CC^n$. We define the $|V|$-tensor $\tens_n(G)$ as
\[
\tens_n(G) \coloneqq \!\!\sum_{i\in [n]^E} \bigotimes_{v\in V} \Bigl( \bigotimes_{\substack{e\in E:\\ v\in e}} b_{i_e} \Bigr),
\]
where the sum is over all tuples $i$ indexed by $E$ with entries in $[n]$. %
Equivalently, one can define $\tens_n(G)$ as a tensor product over edges of $G$, as follows, letting subscripts denote the position of tensor legs,
\[
\tens_n(G) = \bigotimes_{e \in E} \sum_{i\in [n]}  (b_i \otimes b_i)_e \otimes (1\otimes \cdots \otimes 1)_{V\setminus e}.
\]
Here, the element $1$ should be regarded as an element of $\CC$ and the large tensor product inputs and outputs $|V|$-tensors by the natural regrouping. We will denote $\tens_2(G)$ by $\tens(G)$. The definition naturally generalizes to hypergraphs, see \cite{vrana}.
\end{definition}

Note that the tensor $\tens_n(G)$ behaves as follows under the tensor Kronecker product: $\tens_n(G) \otimes T_m(G) \cong T_{nm}(G)$.

We can ignore the fact that the tensor in the above definition depends on the choice of order of the edges and vertices of the graph $G$, since tensor rank and subrank do not depend on this order.

Let $K_k$ be the complete graph on~$k$ vertices and let $C_k$ be the cycle graph on $k$ vertices.

\begin{example} We give some examples of tensors of type $\tens_n(G)$. Let~$n\in\NN$. For any $j_1,j_2,j_3\in [n]$, we define $b_{j_1j_2} \coloneqq b_{j_1} \otimes b_{j_2}$ and $b_{j_1j_2j_3} \coloneqq b_{j_1} \otimes b_{j_2} \otimes b_{j_3}$. Then
\begin{align*}
\tens_n(K_2) &= \sum_{\mathclap{i\in [n]^2}} b_{i_1} \otimes b_{i_1} \in \CC^n \otimes \CC^n \\
\tens_n(K_3) &= \sum_{\mathclap{i\in [n]^3}} b_{i_1 i_2} \otimes b_{i_2 i_3} \otimes b_{i_3 i_1} \in \CC^{n^2}\! \otimes \CC^{n^2}\! \otimes \CC^{n^2}\\
\tens_n(K_4) &= \sum_{\mathclap{i\in[n]^6}} b_{i_1 i_2 i_5} \otimes b_{i_2 i_3 i_6} \otimes b_{i_3 i_4 i_5} \otimes b_{i_1 i_4 i_6}  \in (\CC^{n^3})^{\otimes 4}\\
\tens_n(C_5) &= \sum_{\mathclap{i\in[n]^5}} b_{i_1 i_2} \otimes b_{i_2 i_3} \otimes b_{i_3 i_4} \otimes b_{i_4 i_5} \otimes b_{i_5 i_1}  \in (\CC^{n^2})^{\otimes 5}
\end{align*}
In algebraic complexity theory, the tensor $\tens_n(C_3)$ is called the $n\times n$ matrix multiplication tensor, since it encodes the bilinear map that multiplies two $n\times n$ matrices. It is usually denoted by $\langle n,n,n \rangle$. For $k\geq 3$, $\tens_n(C_k)$ is the iterated matrix multiplication tensor; it encodes the multilinear map that multiplies $k$ matrices of size $n\times n$.
Note that $C_3 = K_3$, so any results on $\tens_n(K_k)$ or $\tens_n(C_k)$ are generalizations of results on the matrix multiplication tensor.
\end{example}

We now introduce the second family of tensors, $D_\lambda$.

\begin{definition}[Dicke tensor \cite{dicke1954coherence, stockton2003characterizing, vrana}]
Let $k$ be a positive integer and let $\lambda = (\lambda_1,\ldots,\lambda_n)  \vdash_n k$ be a partition of $k$ of length at most~$n$. Let $b_1, \ldots, b_n$ be the standard basis of $\CC^n$. Define the \defin{weight-$\lambda$ Dicke tensor}~$D_\lambda$ as
\[
D_\lambda \coloneqq \sum_{\mathclap{\substack{i \in [n]^k:\\ \type(i)=\lambda}}} b_{i_1} \otimes \cdots \otimes b_{i_k} \in (\CC^n)^{\otimes k}
\]
where $\type(i)=\lambda$ means that $i$ is a permutation of the tuple
\[
(\underbrace{1,\ldots, 1}_{\lambda_1}, \underbrace{2, \ldots, 2}_{\lambda_2}, \ldots, \underbrace{n, \ldots, n}_{\lambda_n}).
\]
\end{definition}

\begin{example} We give some examples of tensors of type $D_\lambda$.
\begin{alignat*}{2}
D_{(2,1)} {}={}&\, b_1 \otimes b_1 \otimes b_2 + b_1 \otimes b_2 \otimes b_1 + b_2 \otimes b_1 \otimes b_1,\\[0.5em]
D_{(1,1,1)} {}={}&\, b_1 \otimes b_2 \otimes b_3 + b_1 \otimes b_3 \otimes b_2 + b_2 \otimes b_1 \otimes b_3\\ {}+{}&\, b_2 \otimes b_3 \otimes b_1 + b_3 \otimes b_1 \otimes b_2 + b_3 \otimes b_2 \otimes b_1.
\end{alignat*}
In quantum information theory, the tensor $D_{(2,1)}$ encodes a quantum state known as the W-state, and $D_{(k-1,1)}$ encodes a generalized W-state on $k$ systems.
\end{example}

We now wish to define the exponent and (monomial) subexponent of a tensor, which should be thought of as our complexity measures for tensors. First we define (monomial) restriction, degeneration and asymptotic conversion rate.

Let $\phi \in U_1\otimes \cdots \otimes U_k$ and $\psi\in V_1 \otimes \cdots \otimes V_k$ be $k$-tensors.

\begin{definition}\label{restrict}
We say $\phi$ \defin{restricts to} $\psi$, written $\phi \geq \psi$,  if there exist linear maps $A_i: U_i\to V_i$ such that $\psi = (A_1\otimes \cdots \otimes A_k) \phi$. We say $\phi$ is \defin{isomorphic} to $\psi$ if both $\phi\geq \psi$ and $\phi \leq \psi$. We will often tacitly treat isomorphic tensors as being equal. We say $\phi$ \defin{degenerates to} $\psi$, written $\phi \degengeq \psi$, if $\psi$ is in the orbit closure $\overline{G \phi}$ in the Zariski topology, where $G = \GL(U_1) \times \cdots \times \GL(U_k)$.

A matrix is a monomial matrix if on any of its rows or columns there is at most one nonzero entry.
Fix bases for $U_1, \ldots, U_k$ and $V_1, \ldots, V_k$. We say that there is a \defin{monomial restriction} from $\phi$ to $\psi$, written $\phi \combgeq \psi$, if there exist monomial matrices $A_1, \ldots, A_k$ such that $\psi = (A_1 \otimes \cdots \otimes A_k) \phi$ in the chosen basis. We say that there is a \defin{monomial degeneration} from $\phi$ to~$\psi$, written $\phi \combdegengeq \psi$, if $\psi$ is in the orbit closure $\overline{M \phi}$ in the Zariski topology, where~$M$ is the subgroup of $G$ consisting of $k$-tuples of monomial matrices. %
\end{definition}

Monomial degeneration has a nice combinatorial description for which we refer to Theorem~6.1 in  \cite{strassen1987relative}. It is clear from the definition that $\phi \combgeq \psi$ implies $\phi \combdegengeq \psi$. We refer to \cite{strassen1987relative} for other basic properties of $\combgeq$ and $\combdegengeq$.

\begin{definition}
Define the \defin{asymptotic conversion rate} from $\phi$ to $\psi$ as
\begin{equation}\label{omegadef}
\omega(\phi, \psi) \coloneqq \lim_{n\to\infty} \frac{1}{n} \min\{m \in \NN \mid \phi^{\otimes m} \geq \psi^{\otimes n} \},
\end{equation}
and the \defin{asymptotic monomial conversion rate} from $\phi$ to $\psi$ as
\begin{equation}\label{combomegadef}
\combomega(\phi, \psi) \coloneqq \lim_{n\to\infty} \frac{1}{n} \min\{m \in \NN \mid \phi^{\otimes m} \combdegengeq \psi^{\otimes n} \}.
\end{equation}
The minimum of the empty set is considered to be $\infty$.
\end{definition}

\begin{proposition}
The limit in Equation~\eqref{omegadef} exists and equals the supremum 
$\sup_n  \frac{1}{n} \min\{m \in \NN \mid \phi^{\otimes m} \geq \psi^{\otimes n} \}$.
The limit in Equation~\eqref{combomegadef} exists and equals the supremum $\sup_n  \frac{1}{n} \min\{m \in \NN \mid \phi^{\otimes m} \combdegengeq \psi^{\otimes n} \}$.
\end{proposition}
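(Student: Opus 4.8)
The plan is to reduce both assertions to Fekete's lemma. Fix nonzero tensors $\phi,\psi$ (the cases $\phi=0$ or $\psi=0$ are routine), and set $a_n := \min\{m\in\NN : \phi^{\otimes m}\geq\psi^{\otimes n}\}$ and $\tilde a_n := \min\{m\in\NN : \phi^{\otimes m}\combdegengeq\psi^{\otimes n}\}$, with $\min\emptyset=\infty$. I would treat the two in parallel, using only four formal properties shared by $\geq$ and $\combdegengeq$: (i) reflexivity; (ii) transitivity; (iii) compatibility with tensor products, i.e.\ $\phi_1\geq\psi_1$ and $\phi_2\geq\psi_2$ imply $\phi_1\otimes\phi_2\geq\psi_1\otimes\psi_2$ (apply the two restrictions leg-by-leg; for $\combdegengeq$ one also uses that a Kronecker product of monomial matrices is monomial and passes to orbit closures); and (iv) $\chi\geq 1$ for every nonzero $\chi$ (restrict each leg by a coordinate functional adapted to a basis vector in the support, then rescale).

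First I would clear the infinite case: (iii) and (iv) give $\psi^{\otimes n}\geq\psi^{\otimes n'}$ for $n\geq n'$, so a single relation $\phi^{\otimes m_0}\geq\psi^{\otimes n_0}$ forces $\phi^{\otimes km_0}\geq\psi^{\otimes kn_0}\geq\psi^{\otimes n}$ whenever $kn_0\geq n$; hence $(a_n)$ is either finite for all $n$ or identically $\infty$, the latter case being trivial. Assuming finiteness, tensoring $\phi^{\otimes a_{n_1}}\geq\psi^{\otimes n_1}$ with $\phi^{\otimes a_{n_2}}\geq\psi^{\otimes n_2}$ via (iii) gives $\phi^{\otimes(a_{n_1}+a_{n_2})}\geq\psi^{\otimes(n_1+n_2)}$, so $a_{n_1+n_2}\leq a_{n_1}+a_{n_2}$; by Fekete's lemma $\lim_n a_n/n$ exists and equals $\inf_n a_n/n$, which is the displayed identity with the right-hand extremum understood as an infimum. (That is the correct reading, $a_n$ being subadditive rather than superadditive: e.g.\ $\phi=\tens_4(2)$, $\psi=\tens_2(2)$ gives $a_n=\lceil n/2\rceil$, so $\lim_n a_n/n=\tfrac12=\inf_n a_n/n<1=\sup_n a_n/n$. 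A genuine supremum does arise, but for the dual "production-rate" sequence $g_n:=\max\{m:\phi^{\otimes n}\geq\psi^{\otimes m}\}$: the same property (iii) makes $g_n$ \emph{super}additive, $g_{n_1+n_2}\geq g_{n_1}+g_{n_2}$, so Fekete gives $\lim_n g_n/n=\sup_n g_n/n$; the two sequences are linked by $a_n\leq m\iff g_m\geq n$.) The identical reasoning, with $\combdegengeq$ in place of $\geq$, handles $\tilde a_n$ and hence the monomial conversion rate \eqref{combomegadef}.

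The only step carrying content is verifying properties (ii) and (iii) for the monomial relation $\combdegengeq$. Tensor-compatibility: a Kronecker product of monomial matrices is monomial, so a limit of monomial-tuple actions tensored with another such limit is again one; alternatively one invokes the combinatorial description of $\combdegengeq$ (Theorem~6.1 of \cite{strassen1987relative}). Transitivity: the orbit closure $\overline{M\phi}$ under the group $M$ of tuples of monomial matrices is $M$-stable, so $\overline{M\,\overline{M\phi}}=\overline{M\phi}$. Everything else is formal, so this bookkeeping for $\combdegengeq$ is the only place I would expect to spend any effort, and it presents no real obstacle.
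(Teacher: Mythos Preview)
Your approach via Fekete's lemma is exactly the standard one, and it is what the cited Lemma~1.1 in \cite{strassen1988asymptotic} does; the paper itself gives no argument beyond that reference, so your write-up is strictly more informative.

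More importantly, you are right that the proposition as printed is mis-stated: the sequence $a_n=\min\{m:\phi^{\otimes m}\geq\psi^{\otimes n}\}$ is subadditive, so Fekete yields $\lim_n a_n/n=\inf_n a_n/n$, not the supremum. Your counterexample $\phi=\tens_4(2)$, $\psi=\tens_2(2)$, giving $a_n=\lceil n/2\rceil$ with $\inf_n a_n/n=\tfrac12<1=\sup_n a_n/n$, is clean and settles the point. The ``supremum'' in the proposition should read ``infimum''; the same applies to the monomial version. Your outline of the formal properties (i)--(iv) for $\combdegengeq$ is correct, and the only nontrivial verifications---tensor-compatibility and transitivity for monomial degeneration---are handled adequately by the orbit-closure stability argument you sketch.
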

\begin{proof}
See Lemma 1.1 in \cite{strassen1988asymptotic}.
\end{proof}

\begin{theorem}\label{restrdegen}
Restriction and degeneration are asymptotically equivalent, in the sense that
\[
\omega(\phi, \psi) = \lim_{n\to\infty} \frac{1}{n} \min \{m\in \NN\mid \phi^{\otimes m} \degengeq \psi^{\otimes n} \}.
\]
\end{theorem}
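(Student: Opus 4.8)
The plan is to show the two inequalities $\omega(\phi,\psi) \geq \lim_n \tfrac1n \min\{m \mid \phi^{\otimes m}\degengeq\psi^{\otimes n}\}$ and $\omega(\phi,\psi) \leq \lim_n \tfrac1n \min\{m \mid \phi^{\otimes m}\degengeq\psi^{\otimes n}\}$ separately. The first inequality is immediate: restriction $\phi^{\otimes m}\geq\psi^{\otimes n}$ implies degeneration $\phi^{\otimes m}\degengeq\psi^{\otimes n}$ (the orbit is contained in its closure), so the defining sets of $m$ for the degeneration rate contain those for the restriction rate, hence the degeneration rate is at most $\omega(\phi,\psi)$. The content is entirely in the reverse inequality, i.e.\ showing that asymptotically degeneration is no more powerful than restriction.

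For the reverse inequality, I would use the classical fact (due to Strassen, via the ``substitution'' or ``interpolation'' trick) that a single degeneration $\phi\degengeq\psi$ can be upgraded to a restriction at the cost of a polynomial blow-up. Concretely: if $\psi\in\overline{G\phi}$, then by the theory of one-parameter subgroups / Zariski closure, there exist matrices $A_i(t)$ with entries that are Laurent polynomials in a parameter $t$ such that $(A_1(t)\otimes\cdots\otimes A_k(t))\phi = t^d\psi + t^{d+1}\psi_1 + \cdots + t^{d+e}\psi_e$ for some error terms $\psi_j$ and some integer $e$ (the \emph{degree} of the degeneration). One then interpolates: evaluating at $e+1$ distinct nonzero values $t_0,\ldots,t_e$ of $t$ and taking an appropriate linear combination, one obtains a restriction $\phi\otimes\tens_{e+1}(k) \geq \psi$, or equivalently, absorbing the unit tensor, $\phi^{\oplus(e+1)} \geq \psi$ and hence (since $\phi^{\otimes \text{something}}$ dominates a direct sum of copies of $\phi$, or more simply using that $\tens_{e+1}(k) \leq \phi^{\otimes c}$ for a constant $c$ once $\phi$ has rank $\geq 2$, which we may assume in the nontrivial case) $\phi^{\otimes(1+c)} \geq \psi$ up to the error. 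The key quantitative point is that when we apply this to the $n$th powers, $\phi^{\otimes m}\degengeq\psi^{\otimes n}$, the degree $e$ of the resulting degeneration grows only \emph{linearly} in $n$ (more precisely $e \leq \text{poly}(n)$, and in fact $e = O(n)$ with a constant depending on the fixed degeneration achieving the rate), so that $\phi^{\otimes m}\otimes\tens_{e+1}(k) \geq \psi^{\otimes n}$ with $\log(e+1) = o(n)$.

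Then I would convert $\tens_{e+1}(k)$ back into a power of $\phi$: assuming without loss of generality that $\phi$ is not a restriction of a simple tensor (the degenerate cases being trivial to check directly), $\phi$ restricts $\tens_2(k)$, hence $\phi^{\otimes \lceil\log_2(e+1)\rceil} \geq \tens_{e+1}(k)$. Combining, $\phi^{\otimes(m + \lceil\log_2(e+1)\rceil)} \geq \psi^{\otimes n}$, so that for each $n$ the minimal restriction exponent is at most $\min\{m \mid \phi^{\otimes m}\degengeq\psi^{\otimes n}\} + O(\log n)$. Dividing by $n$ and letting $n\to\infty$, the $O(\log n)/n$ term vanishes, giving $\omega(\phi,\psi) \leq \lim_n \tfrac1n\min\{m\mid\phi^{\otimes m}\degengeq\psi^{\otimes n}\}$, which together with the first inequality and the existence of both limits (the preceding proposition) finishes the proof.

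The main obstacle — and the step deserving the most care — is controlling the degree $e$ of the degeneration obtained for the $n$th powers. One must argue that if $\phi^{\otimes m_0}\degengeq\psi^{\otimes n_0}$ is a fixed degeneration of some degree $e_0$, then the $j$-fold tensor power gives $\phi^{\otimes jm_0}\degengeq\psi^{\otimes jn_0}$ of degree at most $j e_0$ (tensor powers of a degree-$e_0$ curve have degree $\leq j e_0$), so the degree stays linear in the exponent, making $\log(e+1)$ genuinely $o(n)$. For exponents not of the form $jm_0$ one pads with extra factors of $\phi$, changing $m$ by a bounded amount. An alternative, perhaps cleaner, route is to cite the standard ``border rank'' machinery: $\borderrank(\psi^{\otimes n}) \leq \rank(\phi^{\otimes m})$ whenever $\phi^{\otimes m}\degengeq\psi^{\otimes n}$, combined with Bini's theorem that $\borderrank$ and $\rank$ agree asymptotically; but since the statement is phrased directly in terms of the conversion rate $\omega(\phi,\psi)$ rather than rank, the interpolation argument above is the most self-contained. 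I expect the write-up to invoke Strassen's relative bilinear complexity paper or Bürgisser--Clausen--Shokrollahi for the interpolation lemma and simply track the degree bound.
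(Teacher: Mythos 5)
The paper does not actually prove this theorem --- it only cites \cite{strassen1987relative} and \cite{vrana2015asymptotic} --- so the comparison is against the standard argument of those references, which is exactly the route you take: the trivial inequality from ``restriction implies degeneration'', then interpolation to turn a degree-$e$ degeneration into a restriction from $\phi^{\otimes m}\otimes \tens_{e+1}$, linear growth of the degree under tensor powers, and absorption of the unit-tensor factor at logarithmic (hence $o(n)$) cost. The architecture and the identification of the degree control as the crux are both right.

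There is, however, one genuinely false step: the claim that if $\phi$ is not a restriction of a simple tensor then $\phi \geq \tens_2(k)$ (and its variant ``$\tens_{e+1}(k)\leq\phi^{\otimes c}$ once $\rank(\phi)\geq 2$''). The W-state $D_{(2,1)}$ has rank $3$ but subrank $1$, so $D_{(2,1)} \not\geq \tens_2(3)$; one only has $\tens_2 \degenleq D_{(2,1)}$, which is useless here since the whole point is to eliminate degenerations. For the W-state the argument is repairable, because $\subrank(D_{(2,1)}^{\otimes N})\geq 2$ for some fixed $N$ and the overhead becomes $N\lceil\log_2(e+1)\rceil = O(\log n)$; what you actually need, and should state, is that \emph{some power} of $\phi$ restricts to $\tens_2(k)$. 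But even that fails for some non-simple tensors: for $\phi$ as in \cref{cex} (two disjoint EPR pairs), a flattening of rank $1$ forces $\subrank(\phi^{\otimes N})=1$ for all $N$, so $\tens_{e+1}$ can never be absorbed into powers of $\phi$ and the absorption step breaks outright. Those cases are not ``trivial to check directly'' in the form you suggest; they require a separate reduction (a rank-one flattening forces both $\phi$ and any $\psi$ reachable from it to factor across that cut, so one descends to tensors with fewer legs, bottoming out at matrices where restriction and degeneration already coincide non-asymptotically). With the absorption hypothesis stated correctly and the reducible case split off, the proof goes through as in the cited sources.
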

\begin{proof}
See \cite{strassen1987relative}. A proof can also be found in \cite{vrana2015asymptotic}.
\end{proof}

\begin{remark}
We do not know whether in \eqref{combomegadef} one may equivalently replace~$\combdegengeq$ by $\combgeq$. We therefore defined $\combomega$ using the more powerful $\combdegengeq$.
\end{remark}

Strassen introduced the asymptotic study of restriction in the context of the algebraic complexity of bilinear maps \cite{strassen1988asymptotic, strassen1991degeneration}. In quantum information theory, if $\phi$ and $\psi$ are pure quantum states, then $\phi\geq \psi$ means precisely that $\psi$ can be obtained from $\phi$ by means of stochastic local operations and classical communication (SLOCC).
We refer to \cite{vrana2015asymptotic} and \cite{vrana} for general properties of $\omega(\phi, \psi)$.

Restriction and asymptotic conversion rate can be used to compare any two $k$-tensors. In this context, the following tensor will serve as an absolute reference tensor.

\begin{definition}[GHZ tensor or unit tensor]
Let $k,r\in \NN$, and let $b_1, \ldots, b_r$ be the standard basis of $\CC^r$. Define $\tens_r(k)$ as the $k$-tensor 
\[
\tens_r(k) \coloneqq \sum_{\smash{i\in[r]}} b_i\otimes \cdots \otimes b_i   \in  (\CC^r)^{\otimes k}.
\]
We will denote $\tens_r(k)$ by $\tens_r$ when the order~$k$ is understood and we will denote $\tens_2(k)$ by just $\tens(k)$ or $\tens$. %
In quantum information theory, the tensor $\tens_r(k)$ encodes the Greenberger--Horne--Zeilinger (GHZ) quantum state of rank~$r$ on~$k$ systems.
In algebraic complexity theory, for $k=3$ this tensor is called the rank-$r$ unit tensor and is denoted by~$\langle r \rangle$.
Note that $\tens_r(k)$ equals the hypergraph tensor $\tens_r(H)$ where $H$ is the hypergraph on $k$ vertices with a single hyperedge containing all vertices. We refer to \cite{vrana} for the definition of hypergraph tensor.
\end{definition}

\begin{definition}
The \defin{rank} of $\psi$, denoted $\rank(\psi)$, is the smallest number $r$ such that $\psi \leq \tens_r$. Equivalently, it is the smallest number $r$ such that $\psi$ can be written as a sum of $r$ simple tensors $v_1 \otimes \cdots \otimes v_k \in V_1 \otimes \cdots \otimes V_k$. The \defin{border rank} of $\psi$, denoted $\borderrank(\psi)$ is the smallest number $r$ such that $\psi \degenleq \tens_r$.
\end{definition}

\begin{definition}
The \defin{subrank} of $\phi$, denoted $\subrank(\phi)$, is the largest number~$s$ such that $\tens_s \leq \phi$. 
The \defin{monomial subrank} of $\phi$, denoted $\combsubrank(\phi)$, is the largest number $s$ such that $\tens_s \combleq \phi$.
In the same way, one defines the \defin{border subrank}~$\bordersubrank(\phi)$ and the  \defin{monomial border subrank}~$\bordercombsubrank(\phi)$ using $\degenleq$ and $\combdegenleq$ respectively.
\end{definition}

\begin{definition}
The %
 \defin{exponent} of $\psi$ is defined as the asymptotic conversion rate from $\tens$ to $\psi$ and is denoted by~$\omega(\psi)$,
\[
\omega(\psi) \coloneqq \omega(\tens, \psi).
\]
\end{definition}

\begin{definition}
The %
 \defin{subexponent} of $\phi$ is defined as the inverse of the asymptotic conversion rate from $\psi$ to $\tens$, and is denoted by $\subomega(\phi)$,
\[
\subomega(\phi) \coloneqq \omega(\phi, \tens)^{-1}.
\]
\end{definition}

\begin{definition}\label{defcombsubexp}
Let $\phi \in U_1 \otimes \cdots \otimes U_k$ be a $k$-tensor in a fixed basis. The %
 \defin{monomial subexponent} of $\phi$ is defined as the inverse of the asymptotic monomial conversion rate from $\psi$ to $\tens$, and is denoted by $\combsubomega(\phi)$,
\[
\combsubomega(\phi) \coloneqq \combomega(\phi, \tens)^{-1}.
\]
\end{definition}

The parameters $\omega$, $\subomega$ and $\combsubomega$ have the following two useful descriptions in terms of $\rank$, $\subrank$ and~$\bordercombsubrank$.

\begin{proposition}\label{limprop} Let $\phi$ and $\psi$ be $k$-tensors. Then,
\begin{alignat*}{4}
\omega(\psi) %
&= \lim_{n\to \infty} \tfrac1n \log_2 \rank(\psi^{\otimes n}),\\
\subomega(\phi) %
&= \lim_{m\to \infty} \tfrac1m \log_2 \subrank(\phi^{\otimes m}),\\
\combsubomega(\phi) %
&= \lim_{m\to \infty} \tfrac1m \log_2 \bordercombsubrank(\phi^{\otimes m}).
\end{alignat*}
\end{proposition}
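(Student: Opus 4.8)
The plan is to unwind all three equalities directly from the definitions by specialising the (monomial) asymptotic conversion rate to the unit tensor, and then to convert ``minimal number of tensor powers needed'' into ``logarithm of the (sub)rank'' by an elementary squeeze argument based on Fekete's lemma; no ideas beyond the definitions and standard properties of restriction and degeneration are needed. For the first identity I would start from $\omega(\psi)=\omega(\tens,\psi)=\lim_n\tfrac1n\min\{m:\tens^{\otimes m}\ge\psi^{\otimes n}\}$. Since $\tens^{\otimes m}\cong\tens_{2^m}$ and $\tens_r\ge\chi$ holds precisely when $\rank(\chi)\le r$, the quantity inside the limit is exactly $\lceil\log_2\rank(\psi^{\otimes n})\rceil$; dividing by $n$ the ceiling is negligible, and since $\rank$ is submultiplicative under $\otimes$ the limit $\lim_n\tfrac1n\log_2\rank(\psi^{\otimes n})$ exists by Fekete's lemma, so it equals $\omega(\psi)$.

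For the second identity I would use the dual fact that $\chi\ge\tens_s$ iff $\subrank(\chi)\ge s$, so $\omega(\phi,\tens)=\lim_n\tfrac1n h(n)$ with $h(n)=\min\{m:\subrank(\phi^{\otimes m})\ge 2^n\}$. Set $f(m)=\log_2\subrank(\phi^{\otimes m})$: it is nonnegative, bounded above by $m\log_2 d$ for a suitable $d$, and superadditive, because $\tens_s\le\phi^{\otimes m}$ and $\tens_{s'}\le\phi^{\otimes m'}$ give $\tens_{ss'}\le\phi^{\otimes(m+m')}$. Hence $L:=\lim_m f(m)/m=\sup_m f(m)/m$ exists and is finite. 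If $L=0$ then $f\equiv 0$, so $h(n)=\infty$ for $n\ge1$ and $\subomega(\phi)=\omega(\phi,\tens)^{-1}=0=L$. If $L>0$, then $f(m)\to\infty$ so every $h(n)$ is finite; from $n\le f(h(n))\le L\,h(n)$ one gets $\liminf_n h(n)/n\ge 1/L$, while testing $m=\lceil n/(L-\epsilon)\rceil$ against the bound $f(m)\ge(L-\epsilon)m$ (valid for large $m$) gives $\limsup_n h(n)/n\le 1/(L-\epsilon)$ for every $\epsilon>0$; letting $\epsilon\downarrow 0$ yields $\omega(\phi,\tens)=1/L$, i.e. $\subomega(\phi)=L=\lim_m\tfrac1m\log_2\subrank(\phi^{\otimes m})$.

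The third identity is the second argument verbatim with $\ge$ replaced by $\combdegengeq$ and $\subrank$ by $\bordercombsubrank$: by definition $\chi\combdegengeq\tens_s$ iff $\bordercombsubrank(\chi)\ge s$, so $\combomega(\phi,\tens)=\lim_n\tfrac1n\min\{m:\bordercombsubrank(\phi^{\otimes m})\ge 2^n\}$, and $m\mapsto\log_2\bordercombsubrank(\phi^{\otimes m})$ is again nonnegative, bounded linearly, and superadditive — the last point using $\tens_s\otimes\tens_{s'}\cong\tens_{ss'}$ together with the fact that monomial degeneration is closed under tensor products (a standard property of $\combdegengeq$, see \cite{strassen1987relative}), so that $\tens_s\combdegenleq\phi^{\otimes m}$ and $\tens_{s'}\combdegenleq\phi^{\otimes m'}$ imply $\tens_{ss'}\combdegenleq\phi^{\otimes(m+m')}$. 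Fekete plus the same squeeze then give $\combsubomega(\phi)=\lim_m\tfrac1m\log_2\bordercombsubrank(\phi^{\otimes m})$.

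The routine-but-slightly-delicate point I expect to spend the most care on is the squeeze in the second identity — correctly handling the degenerate case $L=0$ (where the conversion rate is infinite and the subexponent is $0$) and the off-by-one coming from the ceilings — together with verifying the supermultiplicativity of $\bordercombsubrank$, i.e. that the group of $k$-tuples of monomial matrices acting on a tensor power contains the block product of the monomial groups of the factors and that Zariski orbit closures behave well under tensor products; both are standard but are exactly where one must invoke the basic properties of $\combdegengeq$.
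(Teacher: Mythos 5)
Your proposal is correct and follows essentially the same route as the paper: unwind $\omega(\tens,\psi)$ and $\omega(\phi,\tens)$ from the definitions, identify $\min\{m:\tens^{\otimes m}\geq\psi^{\otimes n}\}$ with $\lceil\log_2\rank(\psi^{\otimes n})\rceil$ and $\max\{n:\phi^{\otimes m}\geq\tens^{\otimes n}\}$ with $\lfloor\log_2\subrank(\phi^{\otimes m})\rfloor$, and pass to the limit. The paper's proof is terser — it asserts the exchange of $\lim_n$ with $\lim_m$ and the existence of the limits without the Fekete/squeeze details you supply — but the underlying argument is the same.
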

\begin{proof}
To prove the first equality, we have
\begin{align*}
\omega(\tens, \psi) &= \lim_{n\to\infty} \frac1n \min\{m \in \NN\mid \tens^{\otimes m} \geq \psi^{\otimes n}\}\\
&= \lim_{n\to\infty} \frac1n \lceil \log_2 \rank(\psi^{\otimes n}) \rceil \\
&= \lim_{n\to\infty} \frac1n \log_2 \rank(\psi^{\otimes n}).
\end{align*}
To prove the second equality, we have
\begin{align*}
\omega(\phi, \tens) &= \lim_{n\to\infty} \frac1n \min\{m \in \NN \mid \phi^{\otimes m} \geq \tens^{\otimes n}\}\\
&= \lim_{m\to\infty} \max\{n\in \NN\mid \phi^{\otimes m} \geq \tens^{\otimes n}\}^{-1}\, m\\
&= \lim_{m\to\infty} \lfloor \log_2 \subrank(\phi^{\otimes m}) \rfloor^{-1}\, m\\
&= \lim_{m\to\infty} \bigl(\log_2 \subrank(\phi^{\otimes M})\bigr)^{-1} m\\
&= \subomega(\phi)^{-1}.
\end{align*}
The statement for $\combsubomega$ follows from a similar proof.
\end{proof}

\begin{proposition}\label{charac2} Let $\phi$ and $\psi$ be $k$-tensors. Then,
\begin{alignat*}{4}
\omega(\psi) %
&= \inf\{\beta \in \RR\mid \rank(\psi^{\otimes N}) \leq 2^{\beta N + o(N)}\}, \\
\subomega(\phi) %
&= \sup\{\beta \in \RR\mid \subrank(\phi^{\otimes N}) \geq 2^{\beta N - o(N)}\},\\
\combsubomega(\phi) %
&= \sup\{\beta \in \RR\mid \bordercombsubrank(\phi^{\otimes N}) \geq 2^{\beta N - o(N)}\}.
\end{alignat*}
\end{proposition}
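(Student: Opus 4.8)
The plan is to deduce all three equalities directly from \cref{limprop}. That proposition states that the sequences $c_n \coloneqq \tfrac1n\log_2\rank(\psi^{\otimes n})$, $d_n \coloneqq \tfrac1n\log_2\subrank(\phi^{\otimes n})$ and $e_n \coloneqq \tfrac1n\log_2\bordercombsubrank(\phi^{\otimes n})$ converge, to $\omega(\psi)$, $\subomega(\phi)$ and $\combsubomega(\phi)$ respectively. Since a convergent real sequence $x_n\to x$ satisfies $x_n n = xn + o(n)$, \cref{limprop} immediately yields
\[
\rank(\psi^{\otimes n}) = 2^{\omega(\psi) n + o(n)},\qquad \subrank(\phi^{\otimes n}) = 2^{\subomega(\phi) n + o(n)},\qquad \bordercombsubrank(\phi^{\otimes n}) = 2^{\combsubomega(\phi) n + o(n)}.
\]
So the only thing left to check is that the three sets appearing in the statement are exactly the half-lines $[\omega(\psi),\infty)$, $(-\infty,\subomega(\phi)]$ and $(-\infty,\combsubomega(\phi)]$, whose infimum and suprema are the claimed values.

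For the first set: if $\beta\geq\omega(\psi)$, then from $\rank(\psi^{\otimes n}) = 2^{\omega(\psi)n + o(n)}$ and $\omega(\psi)n\leq\beta n$ we get $\rank(\psi^{\otimes n})\leq 2^{\beta n + o(n)}$ (keeping the same $o(n)$ error term), so $\beta$ lies in the set. Conversely, if $\rank(\psi^{\otimes n})\leq 2^{\beta n + g(n)}$ for some $g$ with $g(n)=o(n)$, then dividing the logarithm by $n$ gives $c_n\leq\beta + g(n)/n$, and letting $n\to\infty$ gives $\omega(\psi)=\lim_n c_n\leq\beta$. Hence the set equals $[\omega(\psi),\infty)$ and its infimum is $\omega(\psi)$.

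The second and third equalities are identical with the inequalities reversed: if $\beta\leq\subomega(\phi)$ then $\subrank(\phi^{\otimes n}) = 2^{\subomega(\phi)n+o(n)}\geq 2^{\beta n - o(n)}$, while if $\subrank(\phi^{\otimes n})\geq 2^{\beta n - g(n)}$ with $g(n)=o(n)$ then $d_n\geq\beta - g(n)/n$, so $\subomega(\phi)=\lim_n d_n\geq\beta$; thus the set is $(-\infty,\subomega(\phi)]$ with supremum $\subomega(\phi)$, and the same computation with $e_n$, $\bordercombsubrank$, $\combsubomega$ in place of $d_n$, $\subrank$, $\subomega$ gives the last equality. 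There is no real obstacle in this proof; the only point that needs a line of care is the bookkeeping of the $o(n)$ terms, which is handled by the elementary observation that a convergent normalised logarithm produces an $o(n)$ error term in the exponent.
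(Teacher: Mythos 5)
Your proof is correct and follows essentially the same route as the paper: both deduce all three equalities from \cref{limprop} by elementary bookkeeping of the $o(n)$ terms in the exponent. Your explicit identification of the three sets as half-lines is a slightly tidier packaging of the same two-directional argument the paper gives.
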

\begin{proof}
If $\rank(\psi^{\otimes N}) \leq 2^{\beta N + o(N)}$, then $\omega(\psi) \leq \lim_{N\to \infty} \beta + o(N)/N = \beta$, by \cref{limprop}. Conversely, suppose that $\omega(\psi) < \beta$. Then $\rank(\psi^{\otimes N}) \leq \Oh(2^{\beta N})$, by \cref{limprop}, so $\rank(\psi^{\otimes N})\leq 2^{\beta N + o(N)}$. The statement for $\subomega$ and $\combsubomega$ follows similarly from \cref{limprop}.
\end{proof}

For tensors of type $\tens(G)$ we have the following characterization of $\omega$, $\subomega$ and $\combsubomega$.

\begin{proposition} Let $G$ be a graph. Then,
\begin{align*}
\omega(\tens(G)) &= \inf \{\beta \in \RR \mid \rank(\tens_n(G)) = \Oh(n^\beta)\},\\
\subomega(\tens(G)) &= \sup \{\beta \in \RR \mid \subrank(\tens_n(G)) = \Omega(n^\beta)\},\\
\combsubomega(\tens(G)) &= \sup \{\beta \in \RR \mid \bordercombsubrank(\tens_n(G)) = \Omega(n^\beta)\}.
\end{align*}
\end{proposition}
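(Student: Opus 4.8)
The plan is to derive all three identities from \cref{charac2} by means of the isomorphism $\tens(G)^{\otimes n}\cong\tens_{2^n}(G)$ together with monotonicity of $\rank$, $\subrank$ and $\bordercombsubrank$ in the local dimension.

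First I would record two elementary facts. (i) For any $n,m$ one has $\tens_n(G)^{\otimes m}\cong\tens_{n^m}(G)$: in the edge-product description of \cref{tensdef}, the $m$-th tensor power replaces each factor $\sum_{i\in[n]}(b_i\otimes b_i)_e\otimes(1\otimes\cdots\otimes 1)_{V\setminus e}$ by $\sum_{i\in[n]^m}(b_i\otimes b_i)_e\otimes(1\otimes\cdots\otimes 1)_{V\setminus e}$, and the bijection $[n]^m\cong[n^m]$ of basis indices (which sends standard basis vectors to standard basis vectors, hence respects the monomial structure) turns this into the edge-product defining $\tens_{n^m}(G)$; regrouping tensor legs is an isomorphism. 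In particular $\tens(G)^{\otimes n}=\tens_2(G)^{\otimes n}\cong\tens_{2^n}(G)$. (ii) For $n\le m$, applying the coordinate projection $\CC^m\to\CC^n$ on every leg of $\tens_m(G)$ yields $\tens_n(G)$; since this projection is a monomial matrix, $\tens_m(G)\combgeq\tens_n(G)$, and a fortiori $\tens_m(G)\geq\tens_n(G)$ and $\tens_m(G)\combdegengeq\tens_n(G)$. Because $\rank$ is nonincreasing along $\geq$, $\subrank$ is nondecreasing along $\geq$, and $\bordercombsubrank$ is nondecreasing along $\combdegengeq$ (using transitivity of $\combdegengeq$), each of these three quantities is monotone in $n$ in the appropriate direction.

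I would then prove the first identity and indicate the obvious changes for the other two. By \cref{charac2} and (i), $\omega(\tens(G))=\inf\{\beta\mid\rank(\tens_{2^n}(G))\le 2^{\beta n+o(n)}\}$. If $\rank(\tens_n(G))=\Oh(n^\beta)$ then $\rank(\tens_{2^n}(G))=\Oh(2^{\beta n})\le 2^{\beta n+o(n)}$, giving ``$\le$'' after taking the infimum over such $\beta$. Conversely, fix $\beta>\omega(\tens(G))$ and $\beta'$ with $\omega(\tens(G))<\beta'<\beta$; then $\rank(\tens_{2^n}(G))\le 2^{\beta'n+o(n)}\le C\,2^{\beta n}$ for all $n$ and a suitable constant $C$, and for arbitrary $m$, choosing $n=\lceil\log_2 m\rceil$ gives $m\le 2^n$, hence by (ii) $\rank(\tens_m(G))\le\rank(\tens_{2^n}(G))\le C\,2^{\beta n}\le C\,2^{\beta}\,m^{\beta}=\Oh(m^{\beta})$; thus every $\beta>\omega(\tens(G))$ lies in the set, proving ``$\geq$''. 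The statements for $\subomega$ and $\combsubomega$ follow by the mirror-image sandwich: given $\beta<\subomega(\tens(G))$ (resp.\ $\beta<\combsubomega(\tens(G))$), \cref{charac2} and (i) give $\subrank(\tens_{2^n}(G))\ge c\,2^{\beta n}$ (resp.\ $\bordercombsubrank(\tens_{2^n}(G))\ge c\,2^{\beta n}$) for all $n$ and some $c>0$, and for arbitrary $m$ one takes $n=\lfloor\log_2 m\rfloor$, so $2^n\le m$, and uses (ii) to get $\subrank(\tens_m(G))\ge\subrank(\tens_{2^n}(G))\ge c\,2^{\beta n}=\Omega(m^{\beta})$, and likewise for $\bordercombsubrank$; the reverse inequalities are immediate from (i).

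The argument is essentially bookkeeping, so there is no deep obstacle; the only point that needs attention is the $\combsubomega$ case, where monotonicity must be carried out within the class of monomial maps. This is fine because the coordinate projections in (ii) are $0/1$ matrices with at most one nonzero entry per row and per column, hence monomial, and because $\combdegengeq$ is transitive; these are the only places where the restriction to monomial maps plays a role, and the isomorphism in (i) is between tensors written in the standard bases, so it does not interfere with the monomial structure.
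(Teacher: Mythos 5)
Your proposal is correct and follows essentially the same route as the paper: identify $\tens(G)^{\otimes N}$ with $\tens_{2^N}(G)$, invoke the asymptotic characterizations of $\omega$, $\subomega$, $\combsubomega$, and interpolate between powers of $2$ using monotonicity of $\tens_n(G)$ in $n$ with $N=\lceil\log_2 m\rceil$ (resp.\ $\lfloor\log_2 m\rfloor$). Your write-up is somewhat more careful than the paper's on two points it leaves implicit — the $\beta'<\beta$ slack absorbing the $o(n)$ term, and the fact that the projections realizing monotonicity are monomial so the $\combsubomega$ case goes through — but these are refinements of the same argument, not a different one.
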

\begin{proof}
Suppose that $\rank(\tens_n(G)) = \Oh(n^{\beta})$. Then,
\[
\rank(\tens_2(G)^{\otimes N}) = \rank(\tens_{2^N}(G)) = \Oh(2^{N\beta}).
\]
so $\omega(\tens(G)) \leq \beta$ by \cref{limprop}. Here we used $\tens_2(G)^{\otimes N} \cong \tens_{2^N}(G)$. On the other hand, suppose that $\omega(\tens(G)) < \beta$. Then,
\[
\rank(\tens_{2^N}(G)) = \rank(\tens_2(G)^{\otimes N}) = \Oh(2^{\beta N})
\]
by \cref{limprop},
so
\[
\rank(\tens_n(G)) \leq \rank(\tens_{2^N}(G)) = \Oh(2^{\beta N}) = \Oh(n^\beta)
\]
where $N = \lceil \log_2 n \rceil$.

The proofs for $\subomega$ and $\combsubomega$ follow similarly from \cref{limprop}.
\end{proof}

Note that $\omega(\tens_r(G)) = \omega(\tens(G)) \log_2 r$.

\begin{definition}\label{flattening}
Let $\phi \in U_1 \otimes \cdots \otimes U_k$ be a $k$-tensor. A \defin{flattening} of $\phi$ is a 2-tensor (a matrix) obtained by any grouping of the tensor legs $U_1, \ldots, U_k$ into two groups. 
\end{definition}

Flattenings are useful for obtaining bounds on tensor rank and related notions. 
\begin{proposition}\label{flat}
Let $\phi$ be a $k$-tensor and let $A_\phi$ be any flattening of~$\phi$. Then $\rank(A_\phi) \leq \rank(\phi)$ and $\subrank(\phi) \leq \subrank(A_\phi)$. On the asymptotic level, we have the inequalities $\subomega(\phi) \leq \subomega(A_\phi) = \log_2 \subrank(A_\phi) = \log_2 \rank(A_\phi) = \omega(A_\phi) \leq \omega(\phi)$.
\end{proposition}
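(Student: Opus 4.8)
The plan is to establish the two finite-dimensional inequalities $\rank(A_\phi) \le \rank(\phi)$ and $\subrank(\phi) \le \subrank(A_\phi)$ directly from the definitions, and then to bootstrap to the asymptotic chain by applying them to tensor powers together with \cref{limprop}. Write the grouping that defines $A_\phi$ as a partition $[k] = S \sqcup T$ into two nonempty parts, so that $A_\phi$ is the image of $\phi$ under the canonical linear regrouping isomorphism $U_1 \otimes \cdots \otimes U_k \cong \bigl(\bigotimes_{i\in S} U_i\bigr) \otimes \bigl(\bigotimes_{j\in T} U_j\bigr)$. For the rank bound, this isomorphism is linear and sends a simple tensor $u_1 \otimes \cdots \otimes u_k$ to $\bigl(\bigotimes_{i\in S} u_i\bigr) \otimes \bigl(\bigotimes_{j\in T} u_j\bigr)$, which is a rank-one matrix; hence an optimal decomposition of $\phi$ into $\rank(\phi)$ simple tensors is carried to a decomposition of $A_\phi$ into that many rank-one matrices.

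For the subrank bound, the key observation is that restriction is compatible with flattening: if $\psi = (A_1 \otimes \cdots \otimes A_k)\phi$, then under the regrouping isomorphisms one has $A_\psi = (B_S \otimes B_T) A_\phi$ with $B_S = \bigotimes_{i \in S} A_i$ and $B_T = \bigotimes_{j\in T} A_j$, so $\phi \ge \psi$ forces $A_\phi \ge A_\psi$ (for the flattenings taken along the same grouping). Applying this with $\psi = \tens_s(k)$ for $s = \subrank(\phi)$, and noting that the flattening of $\tens_s(k) = \sum_{i=1}^s b_i \otimes \cdots \otimes b_i$ along any nonempty $S \sqcup T$ is $\sum_{i=1}^s b_i \otimes b_i = \tens_s(2)$, the $s \times s$ identity matrix, we obtain $\tens_s(2) \le A_\phi$, i.e.\ $\subrank(A_\phi) \ge s = \subrank(\phi)$. (If one of the two parts is empty the flattening is a nonzero vector and the asserted inequalities are trivial.)

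For the asymptotic statement I would first record that a matrix $M$ of rank $r$ satisfies $\rank(M) = \subrank(M) = r$: by row and column operations $M$ is isomorphic under $\GL \times \GL$ to $\tens_r(2)$, so $M \le \tens_r(2)$ and $M \ge \tens_r(2)$; consequently $M^{\otimes n}$ is isomorphic to $\tens_{r^n}(2)$, whence $\rank(M^{\otimes n}) = \subrank(M^{\otimes n}) = r^n$ and, by \cref{limprop}, $\omega(M) = \subomega(M) = \log_2 r = \log_2 \rank(M) = \log_2 \subrank(M)$. Applied to $M = A_\phi$ this yields the chain of equalities in the middle of the claim. Next, observe that the flattening of $\phi^{\otimes n}$ taken with respect to the grouping obtained by collecting the $S$-legs of all $n$ factors (up to a harmless permutation of the legs) is exactly the $n$-th Kronecker power $A_\phi^{\otimes n}$; applying the two finite-dimensional inequalities already proven to $\phi^{\otimes n}$ and this flattening gives $\rank(A_\phi^{\otimes n}) \le \rank(\phi^{\otimes n})$ and $\subrank(\phi^{\otimes n}) \le \subrank(A_\phi^{\otimes n})$, and dividing by $n$, taking $\log_2$, letting $n \to \infty$ and invoking \cref{limprop} delivers $\omega(A_\phi) \le \omega(\phi)$ and $\subomega(\phi) \le \subomega(A_\phi)$. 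The only genuinely fiddly point is this bookkeeping for "flattening commutes with tensor powers and with restriction" — pinning down the permutation of tensor legs that identifies $A_{\phi^{\otimes n}}$ with $A_\phi^{\otimes n}$ and checking that the regrouping isomorphism intertwines $A_1 \otimes \cdots \otimes A_k$ with $B_S \otimes B_T$; everything else is routine once these identifications are fixed, and the identification of the flattening of $\tens_s(k)$ with the $s \times s$ identity matrix is immediate from the explicit formula.
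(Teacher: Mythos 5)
Your proposal is correct and follows essentially the same route as the paper's proof: flattenings send simple tensors to rank-one matrices, flattening is compatible with restriction (so the subrank inequality follows by flattening $\tens_s(k)$ to the identity matrix), matrix rank is multiplicative under tensor product and equals subrank, and the asymptotic chain follows by applying the finite inequalities to $\phi^{\otimes n}$ and invoking \cref{limprop}. You merely spell out in more detail the compatibility of flattening with restriction and with tensor powers, which the paper leaves implicit.
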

\begin{proof}
A flattening of a simple $k$-tensor is a simple 2-tensor (a rank-1 matrix), and hence for an arbitrary $k$-tensor $\phi$ we have $\rank(A_\phi) \leq \rank(\phi)$.
Similarly, $\subrank(\phi) \leq \subrank(A_\phi)$.

For matrices, rank is multiplicative under tensor product and coincides with subrank. Therefore, for any $n\in \NN$,
\[
\subrank(A^{\otimes n}) \leq \subrank(A_\phi^{\otimes n}) = \subrank(A_\phi)^n = \rank(A_\phi)^n = \rank(A_\phi^{\otimes n}) \leq \rank(\phi^{\otimes n}).
\]
Taking the $n$th root, taking the logarithm $\log_2$ and letting $n$ go to infinity, gives
\[
\subomega(\phi) \leq \subomega(A_\phi) = \log_2 \subrank(A_\phi) =  \log_2 \rank(A_\phi) = \omega(A_\phi) \leq \omega(\phi),
\]
by \cref{limprop}, finishing the proof.
\end{proof}

\begin{example}\label{graphcut}
A cut of a graph $G=(V,E)$ is a partition of $V$ into two disjoint nonempty sets. A max-cut is a cut with maximal number of edges crossing these two sets. A min-cut is a cut with minimal number of edges crossing these two sets. For any graph $G$, let $f(G)$ denote the size of a max-cut of $G$, and $g(G)$ the size of a min-cut. Flattening along the appropriate cuts yields
\[
\subrank(\tens(G)) \leq 2^{g(G)}\leq 2^{f(G)} \leq \rank(\tens(G))
\]
and thus
\begin{equation}\label{cutseq}
\subomega(\tens(G)) \leq g(G)\leq f(G) \leq \omega(\tens(G)).
\end{equation}
While for a 2-tensor $\phi$, we have $\subrank(\phi) = \rank(\phi)$ and $\subomega(\phi) = \omega(\phi)$, the above observation gives us many examples of higher-order tensors for which this is not the case. For example, let $\phi = \tens(C_k)$. Then $g(C_k) = 2$ and $f(C_k) = k-1$.
\end{example}

Let us discuss the exponent in more detail.

\begin{definition}%
Define the real number $\omega \coloneqq \omega(\tens(C_3))$. This number is called the \emph{exponent of matrix multiplication}.
\end{definition}

In algebraic complexity theory, the number $\omega$ is a measure for the asymptotic complexity of multiplying two $n\times n$ matrices, and has been receiving much attention since the discovery of Strassen's matrix multiplication algorithm \cite{strassen1969gaussian}. See also the standard reference for algebraic complexity theory \cite{burgisser1997algebraic}.
The following bounds on $\omega$ are the state of the art. The upper bound is by Le Gall \cite{le2014powers} and the lower bound is by a standard flattening argument (\cref{graphcut}).
\begin{theorem}\label{thmle2014powers}
$2 \leq \omega \leq 2.3728639$.
\end{theorem}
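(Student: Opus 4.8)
The plan is to treat the two inequalities separately, since they come from completely different sources: the lower bound is an elementary flattening argument, whereas the upper bound is a deep result that I would cite rather than reprove.

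For $\omega \geq 2$, I would appeal directly to the flattening argument of \cref{graphcut} applied to the triangle. Since $\tens(C_3) = \tens(K_3)$ and every cut of $K_3$ has exactly two edges crossing it, we have $g(K_3) = f(K_3) = 2$, so \cref{graphcut} already gives $\omega(\tens(C_3)) \geq f(K_3) = 2$. Unwinding this: in the flattening of $\tens_n(C_3) = \langle n,n,n\rangle$ that isolates one tensor leg from the other two, the $n^2$ rows are supported on pairwise disjoint sets of basis vectors and hence linearly independent, so $\rank(\tens_n(C_3)) \geq n^2$ for all $n$. Taking $n = 2^N$ and using $\tens(C_3)^{\otimes N} = \tens_{2^N}(C_3)$, this yields $\rank(\tens(C_3)^{\otimes N}) \geq 2^{2N}$, and then \cref{limprop} gives $\omega = \omega(\tens(C_3)) = \lim_{N\to\infty}\tfrac1N \log_2 \rank(\tens(C_3)^{\otimes N}) \geq 2$.

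For the upper bound I would invoke Le Gall \cite{le2014powers}; a self-contained proof is out of scope. The bound $\omega \leq 2.3728639$ is obtained by the Coppersmith--Winograd laser method: one starts from the (tight) Coppersmith--Winograd tensor, whose border rank is only slightly larger than one of its dimensions, takes a high tensor power of it, exploits the block grading of that power to degenerate from it a large restricted direct sum of small matrix multiplication tensors, and then applies Schönhage's asymptotic sum inequality. Following Stothers, Vassilevska-Williams, and finally Le Gall, one analyses the fourth and higher tensor powers of the CW tensor and solves the resulting high-dimensional program numerically by computer, which produces the stated decimal value. The genuinely hard ingredient, were one to attempt this direction from scratch, is exactly this optimization: controlling the loss incurred when splitting the tensor power into independent matrix multiplication blocks, and then globally optimizing the resulting non-convex, many-parameter program that governs the final exponent. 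For the present paper, however, only the lower bound needs an argument; the value $2.3728639$ enters purely as a black box, and indeed a central point of what follows is that for $k \geq 4$ the exponent per edge $\tau(\tens(K_k))$ beats what this value of $\omega$ implies for $k = 3$.
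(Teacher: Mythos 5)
Your proposal matches the paper exactly: the paper gives no separate proof for this theorem but states that the lower bound follows from the standard flattening argument of \cref{graphcut} and the upper bound is cited from Le Gall. Your unwinding of the flattening for $\tens_n(C_3)$ (rank of the flattening is $n^2$, hence $\omega\geq 2$ via \cref{limprop}) is correct, and treating the value $2.3728639$ as a black-box citation is precisely what the paper does.
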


Computing $\omega(\tens(G))$ will be a hard task in general, since it includes computing the matrix multiplication exponent $\omega$. One may however try
 to prove bounds on $\omega(\tens(G))$ \emph{in terms of} the matrix multiplication exponent $\omega$ or the \emph{dual exponent} $\alpha$.  %
The number $\alpha$ is defined as ${\sup\{\gamma \in \RR \mid \omega(1,1,\gamma) = 2\}}$, where $\omega(1,1,\gamma)$ is defined as $\inf\{\beta \in \RR \mid \rank(\langle n, n, \floor{n^{\gamma}}) = \Oh(n^\beta)\}$ and for $n_1, n_2, n_3\in \NN$ the tensor $\langle n_1, n_2, n_3\rangle$ is the rectangular matrix multiplication tensor
\begin{multline*}
\langle n_1,n_2,n_3 \rangle \coloneqq \sum_{\mathclap{\substack{i \in [n_1]\times [n_2]\times [n_3]}}} (b_{i_1}\otimes b_{i_2})\otimes (b_{i_2}\otimes b_{i_3}) \otimes (b_{i_3} \otimes b_{i_1})\\[-0.5em]
\in (\CC^{n_1}\otimes \CC^{n_2}) \otimes (\CC^{n_2}\otimes \CC^{n_3}) \otimes (\CC^{n_3} \otimes \CC^{n_1}).
\end{multline*}
The  state of the art bounds on $\alpha$ are $0.3029805 < \alpha \leq 1$ \cite{le2012faster}.
In \cite{christandl2016tensor} we used a technique called tensor surgery to obtain such a result for cycle graphs $C_k$. Namely, 
let $k \in \NN_{\geq 1}$. Then
\begin{alignat}{2}
&\omega(\tens(C_k)) = k  \qquad&&\textnormal{when $k$ is even,}\\
k-1 \leq{}& \omega(\tens(C_k)) \leq \frac{k-1}{2} \omega  \qquad&&\textnormal{when $k$ is odd.}\label{eqoddcycle}
\end{alignat}
Also, in terms of the dual exponent $\alpha$, for odd $k$,
\[
\omega(T(C_k)) \leq k - \alpha\Bigl( 1 + \frac{1-\alpha}{k-1+\alpha} \Bigr) \leq k-\alpha.
\]
In particular, if $\omega = 2$ (equivalently $\alpha = 1$), then $\omega(\tens(C_k)) = k-1$ when $k$ is odd.
We will see a similar type of result for complete graphs in the next subsection, see~\eqref{trianglecover}. We note that tensor surgery seems to work well for sparse graphs like cycle graphs, and not so well for dense graphs like the complete graph. However, one could use the complete graph tensors as starting tensors in a tensor surgery procedure.

\begin{definition}
For any graph $G = (V,E)$, we define the \emph{exponent per edge}
\[
\tau(\tens(G)) \coloneqq \omega(\tens(G))/|E|.
\]
The letter $\tau$ is borrowed from the Schönhage asymptotic sum inequality, also known as the $\tau$-theorem.
\end{definition}

For complete graphs, the exponent per edge has the following monotonicity property.

\begin{proposition}\label{peredge}
Let $k\geq\ell\geq 2$ be integers. Then $\tau(\tens(K_{k})) \leq \tau(\tens(K_{\ell}))$.
\end{proposition}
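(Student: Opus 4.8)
The plan is to realize $\tens(K_k)$ as a ``covering'' of tensor powers of $\tens(K_\ell)$ in the sense of taking many disjoint $\ell$-subsets of the $k$ vertices, producing a product of $\tens(K_\ell)$-tensors that restricts to $\tens(K_k)$ after suitable identifications, and then compare edge counts. Concretely, since $\binom{[k]}{\ell}$ consists of $\binom{k}{\ell}$ subsets, each $\ell$-subset $S$ carries a copy of the complete graph $K_\ell$ on those vertices, and every edge of $K_k$ lies in exactly $\binom{k-2}{\ell-2}$ of these subsets. So the product tensor $\bigotimes_{S \in \binom{[k]}{\ell}} \tens_n(K_\ell|_S)$, after grouping tensor legs by vertex of $[k]$, is exactly $\tens_n\bigl(\binom{k-2}{\ell-2}\cdot K_k\bigr)$, i.e. the graph tensor of $K_k$ with every edge replaced by $\binom{k-2}{\ell-2}$ parallel edges; and this is just $\tens_{n}$ evaluated at a multigraph whose rank behaviour is governed by $\tens_{n^{\binom{k-2}{\ell-2}}}(K_k)$ by merging parallel edges (each group of parallel edges between two vertices can be collapsed by the identity-on-$\CC^{n^{\binom{k-2}{\ell-2}}}$ map since $(b_i\otimes b_i)^{\otimes m}$ restricts to and from $b_{(i,\dots,i)}\otimes b_{(i,\dots,i)}$). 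Hence $\tens_n(K_\ell)^{\otimes \binom{k}{\ell}}$ restricts to $\tens_{m}(K_k)$ with $m = n^{\binom{k-2}{\ell-2}}$.

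With this restriction in hand, the next step is purely to pass to the exponents via the characterizations proved earlier in the excerpt. Using $\rank(\tens_n(K_\ell)) = \Oh(n^{\omega(\tens(K_\ell)) + \varepsilon})$ for every $\varepsilon>0$ and submultiplicativity of rank under tensor product, one gets
\[
\rank(\tens_m(K_k)) \le \rank(\tens_n(K_\ell))^{\binom{k}{\ell}} = \Oh\bigl(n^{\binom{k}{\ell}(\omega(\tens(K_\ell))+\varepsilon)}\bigr),
\]
and substituting $n = m^{1/\binom{k-2}{\ell-2}}$ yields $\rank(\tens_m(K_k)) = \Oh(m^{\beta})$ with $\beta = \binom{k}{\ell}(\omega(\tens(K_\ell))+\varepsilon)/\binom{k-2}{\ell-2}$. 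By the proposition characterizing $\omega(\tens(G))$ as $\inf\{\beta : \rank(\tens_n(G)) = \Oh(n^\beta)\}$, this gives
\[
\omega(\tens(K_k)) \le \frac{\binom{k}{\ell}}{\binom{k-2}{\ell-2}}\,\omega(\tens(K_\ell)).
\]
Finally, I compute $\binom{k}{\ell}/\binom{k-2}{\ell-2} = \binom{k}{2}/\binom{\ell}{2} = |E(K_k)|/|E(K_\ell)|$ (a short binomial identity), so dividing both sides by $\binom{k}{2}$ gives $\tau(\tens(K_k)) = \omega(\tens(K_k))/\binom{k}{2} \le \omega(\tens(K_\ell))/\binom{\ell}{2} = \tau(\tens(K_\ell))$, as desired.

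The step I expect to need the most care is the first one: verifying rigorously that after regrouping tensor legs by the $k$ vertices, the product over $\ell$-subsets genuinely is the multigraph tensor $\tens_n\bigl(\binom{k-2}{\ell-2}K_k\bigr)$, and that collapsing parallel edges is a legitimate restriction giving $\tens_{n^{\binom{k-2}{\ell-2}}}(K_k)$ — this is the bookkeeping that makes the ``covering argument'' work and is exactly the content alluded to by \cref{peredge} being proved ``by a covering argument''. Everything downstream is a mechanical transfer through \cref{limprop} and the graph-tensor characterization of $\omega$, plus the elementary identity $\binom{k}{\ell}/\binom{k-2}{\ell-2}=\binom{k}{2}/\binom{\ell}{2}$.
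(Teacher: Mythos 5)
Your proposal is correct and is essentially the paper's own proof: the same covering argument in which the $\binom{k}{\ell}$ copies of $K_\ell$ inside $K_k$ cover each edge exactly $\binom{k-2}{\ell-2}$ times, so the product of the $K_\ell$-tensors is (after regrouping legs by vertex and merging parallel edges) a $\binom{k-2}{\ell-2}$-th power of the $K_k$-tensor, followed by the identity $\binom{\ell}{2}\binom{k}{\ell}/\binom{k-2}{\ell-2}=\binom{k}{2}$. The only cosmetic difference is that you phrase the asymptotics via the $\rank(\tens_n(G))=\Oh(n^\beta)$ characterization of $\omega(\tens(G))$ while the paper works directly with tensor powers of $\tens_2(K_\ell)$ and restriction to GHZ powers; these are interchangeable by the propositions in the preliminaries.
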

\begin{proof}
Let $\tau_\ell = \tau(\tens(K_\ell))$. Label the vertices of $K_k$ by $1,\ldots, k$. Then, for any subgraph $G$ in $K_k$ isomorphic to~$K_\ell$ we have, with subscripts denoting tensor leg positions,
\[
\bigl((\tens(G))_{V(G)} \otimes (1\otimes \cdots \otimes 1)_{[k]\setminus V(G)}\bigr)^{\otimes N} \leq \tens^{\otimes \bigl(\binom{\ell}{2}\tau_\ell N + o(N)\bigr)}
\]
and so
\[
\bigotimes_{\substack{G\subseteq K_k:\\ G\cong K_\ell}} \bigl((\tens(G))_{V(G)} \otimes (1\otimes \cdots \otimes 1)_{[k]\setminus V(G)}\bigr)^{\otimes N} \leq \tens^{\otimes \bigl(\binom{\ell}{2}\tau_\ell N + o(N)\bigr)\binom{k}{\ell}},
\]
the tensor product taken over all subgraphs $G$ in $K_k$ isomorphic to $K_\ell$.
The left-hand side is isomorphic to $\tens(K_k)^{\otimes {N\binom{k-2}{\ell-2}}}$. Therefore, we have the upper bound $\omega(\tens(K_k)) \leq \binom{\ell}{2}\binom{k}{\ell}\binom{k-2}{\ell-2}^{-1} \tau_\ell = \binom{k}{2}\tau_\ell$, so $\tau(\tens(K_k)) \leq \tau_\ell$.
\end{proof}

Having discussed the exponent, we now wish to focus on the (monomial) subexponent, and primarily on an important result about the (monomial) subexponent of so-called tight 3-tensors.

\begin{definition}\label{supportdef} Let $\phi$ be an element of $V_1 \otimes \cdots \otimes V_k$.
Let $B_1, \ldots, B_k$ be bases for $V_1, \ldots, V_k$ respectively. Write $\phi$ in terms of these bases,
\[
\phi = \sum_{b_i\in B_i} \phi(b_1, \ldots, b_k)\, b_1 \otimes \cdots \otimes b_k\quad \textnormal{with}\quad \phi(b_1, \ldots, b_k) \in \CC.
\]
Then the \defin{support} of $\phi$ with respect to $B=(B_1, \ldots, B_k)$ is defined as the set of $k$-tuples
\[
\supp_B \phi \coloneqq \{(b_1, \ldots, b_k)\in B_1\times \cdots \times B_k \mid  \phi(b_{1}, \ldots, b_{k})\neq 0\}.
\]
When the basis is clear from the context we will simply write $\supp \phi$. It is often convenient to identify each $B_i = \{b_1, b_2, \ldots, b_{\abs[0]{B_i}}\}$ with its index set $\{1,2,\ldots, \abs[0]{B_i}\}$ by $b_j \mapsto j$ so that $\supp_B \phi$ becomes a set of tuples of natural numbers. (Sometimes the index set starts at 0 instead of 1.)
\end{definition}

\begin{definition}\label{deftight}
Let $B_1,\ldots, B_k$ be bases for $V_1, \ldots, V_k$ respectively and let $\alpha_1: B_1 \to \ZZ,\, \ldots,\, \alpha_k: B_k \to \ZZ$ be injective maps such that 
\[
\alpha_1(b_{1}) + \cdots + \alpha_k(b_{k}) = 0\quad \textnormal{for each tuple}\quad (b_1,\ldots, b_k) \in \supp \phi.
\]
Then we say $\phi$ is \defin{tight with respect to $\alpha = \alpha_1\times \cdots \times \alpha_k$}. We say $\phi$ is \defin{tight} if it is tight with respect to $\alpha = \alpha_1\times \cdots \times \alpha_k$ for some $\alpha_i : B_i \to \ZZ$.
\end{definition}

The following lemma is easy to prove.

\begin{lemma}\label{prodtight}
Tensor products of tight tensors are tight in the tensor product basis. Therefore, any graph tensor $\tens(G)$ is tight in the standard basis.
\end{lemma}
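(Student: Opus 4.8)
The plan is to prove the two assertions in turn, reducing the second to the first. For the first assertion it suffices, by induction on the number of factors, to treat a tensor product of two tight tensors $\phi \in V_1 \otimes \cdots \otimes V_k$ and $\psi \in W_1 \otimes \cdots \otimes W_k$. Say $\phi$ is tight with respect to injective maps $\alpha_i \colon B_i \to \ZZ$ and $\psi$ is tight with respect to injective maps $\beta_i \colon C_i \to \ZZ$, where $B_i$ and $C_i$ are the chosen bases of $V_i$ and $W_i$. The tensor product basis of the $i$-th leg $V_i \otimes W_i$ of $\phi \otimes \psi$ is $B_i \times C_i = \{b \otimes c : b \in B_i,\ c \in C_i\}$, and with respect to these bases
\[
\supp(\phi \otimes \psi) = \bigl\{\bigl((b_1,c_1), \ldots, (b_k,c_k)\bigr) : (b_1,\ldots,b_k) \in \supp\phi,\ (c_1,\ldots,c_k) \in \supp\psi\bigr\}.
\]

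Next I would build the witnessing maps for $\phi \otimes \psi$ by a base-$M$ trick. Fix an integer $M > 1 + 2\max_{i}\max_{c \in C_i}\abs{\beta_i(c)}$ (finite, since all bases are finite), and define $\gamma_i \colon B_i \times C_i \to \ZZ$ by $\gamma_i(b,c) \coloneqq M\,\alpha_i(b) + \beta_i(c)$. For injectivity, suppose $\gamma_i(b,c) = \gamma_i(b',c')$; rearranging gives $M\bigl(\alpha_i(b) - \alpha_i(b')\bigr) = \beta_i(c') - \beta_i(c)$, whose right-hand side has absolute value strictly less than $M$ while the left-hand side is a multiple of $M$, so both sides vanish, and injectivity of $\alpha_i$ and $\beta_i$ forces $b = b'$ and $c = c'$. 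Moreover, for any $\bigl((b_1,c_1),\ldots,(b_k,c_k)\bigr) \in \supp(\phi\otimes\psi)$,
\[
\sum_{i=1}^{k} \gamma_i(b_i,c_i) = M \sum_{i=1}^{k} \alpha_i(b_i) + \sum_{i=1}^{k}\beta_i(c_i) = M\cdot 0 + 0 = 0,
\]
so $\phi\otimes\psi$ is tight with respect to $\gamma = \gamma_1 \times \cdots \times \gamma_k$. Iterating over the factors proves the first assertion.

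For the second assertion, the single-edge tensor $\tens(K_2) = \sum_{i\in[n]} b_i \otimes b_i \in \CC^n \otimes \CC^n$ is tight with respect to $\alpha_1(b_i) = i$, $\alpha_2(b_i) = -i$. More generally, given a graph $G = (V,E)$ and an edge $e = \{u,w\}$, the $|V|$-tensor $\sum_{i\in[n]} (b_i \otimes b_i)_e \otimes (1\otimes\cdots\otimes 1)_{V\setminus e}$ appearing in \cref{tensdef} is tight: the legs other than $u$ and $w$ are one-dimensional, so we send their single basis vector to $0 \in \ZZ$, send the basis vector $b_i$ in leg $u$ to $i$ and the basis vector $b_i$ in leg $w$ to $-i$; the zero-sum condition then holds on the support, which consists of the tuples carrying $b_i$ in legs $u$ and $w$ and $1$ elsewhere. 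By \cref{tensdef}, $\tens_n(G)$ is precisely the tensor product over $e \in E$ of these $|V|$-tensors, in the tensor product basis (the basis of leg $v$ being the product over incident edges of standard basis vectors, with one-dimensional factors for non-incident edges). Hence $\tens_n(G)$ is tight by the first assertion, and in particular $\tens(G) = \tens_2(G)$ is tight in the standard basis.

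The only real obstacle is the injectivity requirement in the definition of tightness: the naive choice $\gamma_i(b,c) = \alpha_i(b) + \beta_i(c)$ satisfies the zero-sum condition but is typically not injective, which is exactly what the scaling factor $M$ repairs. Everything else is bookkeeping about tensor product bases and supports.
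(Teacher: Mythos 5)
Your proof is correct, and since the paper only asserts the lemma is ``easy to prove'' without giving an argument, you have supplied exactly the intended one: the base-$M$ scaling $\gamma_i(b,c)=M\alpha_i(b)+\beta_i(c)$ is the same device the paper uses implicitly in \cref{tightexample}, where the weights $1,2,4,\ldots,2^{k-1}$ play the role of your powers of $M$. Both the injectivity repair and the reduction of the graph-tensor case to single-edge tensors are handled correctly.
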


\begin{example}\label{tightexample}
$\tens(C_k) = \sum_{i\in [2]^k} (b_{i_1}\otimes b_{i_2}) \otimes (b_{i_2}\otimes b_{i_3}) \otimes \cdots \otimes (b_{i_k}\otimes b_{i_1})$ is tight for any $k$ by \cref{prodtight}.
Explicitly, let 
\begin{alignat*}{3}
\alpha_1&:{}& &b_{i_1} \otimes b_{i_2} &&\mapsto i_1 - 2(i_2-1),\\
\alpha_2&:{}& &b_{i_2} \otimes b_{i_3} &&\mapsto 2(i_2-1) - 4(i_3-1),\\
\vdots\\
\alpha_k&:{}& &b_{i_k} \otimes b_{i_1} &&\mapsto 2^{k-1}(i_1-1) - i_1.
\end{alignat*}
These maps are injective and $\alpha_1(b_{i_1} \otimes b_{i_2}) + \cdots + \alpha_k(b_{i_k} \otimes b_{i_1}) = 0$ for any~$i \in [2]^k$.
\end{example}

\begin{example}\label{Dtight} For any partition $\lambda\vdash_d k$, the Dicke tensor $D_{\lambda}$ is a tight tensor. Namely, let $s = \sum_{j=1}^d \lambda_j \cdot j$, and let
\begin{alignat*}{3}
\alpha_1&:{}& &b_{i} &&\mapsto i,\\
\alpha_2&:{}& &b_{i} &&\mapsto i,\\
\vdots\\
\alpha_k&:{}& &b_{i} &&\mapsto i - s.
\end{alignat*}
Then $\alpha_1(b_1) + \cdots + \alpha_k(b_k) = 0$ for any $(b_1, \ldots, b_k) \in \supp D_\lambda$.
\end{example}

For tight 3-tensors, Strassen proved the following theorem. For a discrete probability distribution $P = (p_1, \ldots, p_n)$, let $H(P)$ be the \defin{Shannon entropy} of $P$, which is defined by $H(P) \coloneqq -\sum_{i=1}^n p_i \log_2 p_i$. Recall that if~$P$ is a probability distribution on a set $X$ consisting of $k$-tuples $(x_1, \ldots, x_k)$, then the marginal distribution $P_i$ on the $i$th component $X_i = \{x_i \mid x\in X\}$ is defined by $P_i(y) \coloneqq \sum_{x\in X:\, x_i = y} P(x)$.

\begin{theorem}[\cite{strassen1991degeneration}]\label{strassentripartite} Let $\phi$ be a 3-tensor which is tight in some basis~$B$. Then
\begin{equation}\label{strassen}
\combsubomega(\phi) = \subomega(\phi) = \max_{P\in\optset{P}_\Phi} \min\{H(P_1), H(P_2), H(P_3)\},
\end{equation}
where $\optset{P}_\Phi$ consists of the probability distributions $P$ on $\supp_B \phi$, and $P_i$ is the marginal distribution of $P$ on the $i$th component of $\supp_B \phi$.
\end{theorem}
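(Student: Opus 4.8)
The plan is to prove two chains of inequalities. Since $\combdegengeq$ is a restriction of $\degengeq$ (monomial degenerations are a special case of degenerations), we always have $\combsubomega(\phi) \le \subomega(\phi)$, and by \cref{flat} and basic monotonicity $\subomega(\phi) \le \min\{H(P_1),H(P_2),H(P_3)\}$ cannot hold for all $P$ — rather the content is the sandwich
\[
\max_{P\in\optset{P}_\Phi} \min\{H(P_1),H(P_2),H(P_3)\} \;\le\; \combsubomega(\phi) \;\le\; \subomega(\phi) \;\le\; \max_{P\in\optset{P}_\Phi} \min\{H(P_1),H(P_2),H(P_3)\}.
\]
So there are really two things to do: the lower bound on $\combsubomega(\phi)$ (the hard, constructive part), and the upper bound on $\subomega(\phi)$ (a clean counting/entropy argument). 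By \cref{limprop} it suffices to exhibit, for each $P \in \optset{P}_\Phi$ and each $\epsilon>0$, monomial degenerations $\phi^{\otimes N} \combdegengeq \tens_r(3)$ with $r \ge 2^{(\min_i H(P_i) - \epsilon)N}$ for large $N$, and conversely to bound $\subrank(\phi^{\otimes N})$ from above by $2^{(\max_P \min_i H(P_i) + \epsilon)N}$.

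For the \textbf{upper bound} I would argue as follows. Fix a flattening; more usefully, for any index $i\in\{1,2,3\}$ the restriction maps witnessing $\tens_s(3) \le \phi^{\otimes N}$ (or even the degeneration version via \cref{restrdegen}) force $s$ to be at most the dimension of the span of the $i$-th leg of $\phi^{\otimes N}$ restricted to the relevant subspace; more precisely one uses the standard fact that a diagonal of size $s$ inside $\phi^{\otimes N}$ picks out $s$ tuples of $\supp_B(\phi^{\otimes N}) = (\supp_B\phi)^{\times N}$ whose projections to each coordinate are pairwise distinct. Counting: if $s$ tuples in $(\supp_B\phi)^N$ have pairwise-distinct $i$-th coordinates for every $i$, then letting $P$ be the empirical type (the uniform distribution on the multiset of "column types" appearing) one gets, by the standard type-counting / method-of-types estimate, $s \le 2^{N(\min_i H(P_i) + o(1))}$ with $P$ essentially supported on $\supp_B\phi$. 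Taking $N\to\infty$ and optimizing over the resulting $P$ gives $\subomega(\phi) \le \max_{P} \min_i H(P_i)$. (Some care is needed because border subrank, not subrank, appears in \cref{limprop}; but the span/dimension bounds are Zariski-closed conditions, so they survive taking orbit closures, which is exactly why the border versions behave the same.)

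For the \textbf{lower bound}, which is the main obstacle, I would follow Strassen's construction. Fix $P \in \optset{P}_\Phi$ with rational entries (a limiting argument handles general $P$), and let $N$ be large and divisible so that $NP$ is an integer vector; consider the subtensor of $\phi^{\otimes N}$ supported on the single type class — the set of $N$-tuples from $\supp_B\phi$ in which each element $(b_1,b_2,b_3)\in\supp_B\phi$ is used exactly $NP(b_1,b_2,b_3)$ times. This subtensor is obtained from $\phi^{\otimes N}$ by a monomial restriction (zero out all basis vectors outside the relevant type classes on each leg), so it suffices to find a large GHZ-tensor inside it via monomial degeneration. Here tightness is used crucially: the maps $\alpha_1,\alpha_2,\alpha_3$ let one assign to each basis vector of each leg an integer "weight", and one uses a monomial degeneration driven by substituting $t^{\alpha_i(\cdot)}$-type scalings (a $1$-parameter subgroup of the diagonal torus) together with the combinatorial description of monomial degeneration (Strassen's Theorem 6.1 in \cite{strassen1987relative}) to kill all "off-diagonal" support terms while keeping a set of diagonal terms indexed by — roughly — the distinct column-patterns on any one leg. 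A Salem–Spencer / no-three-term-arithmetic-progression set argument (as in Coppersmith–Winograd / Strassen) is then needed to ensure the surviving diagonal is genuinely a GHZ tensor and not merely a "partial" diagonal, and to guarantee the count is $2^{N(\min_i H(P_i) - o(1))}$: the number of type-$NP_i$ sequences on leg $i$ is $2^{N(H(P_i)-o(1))}$ by the type-counting estimate, and we can route a common diagonal through all three legs of that size. The delicate points are (i) verifying that the one-parameter degeneration indeed annihilates exactly the unwanted terms — this is where the injectivity of the $\alpha_i$ and the tightness equation $\alpha_1+\alpha_2+\alpha_3=0$ enter, making the unwanted terms acquire strictly positive $t$-degree while the diagonal terms stay at degree $0$ — and (ii) the combinatorial packing ensuring the three marginal type classes can be simultaneously diagonalized, which is the Salem–Spencer input. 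I would cite \cite{strassen1991degeneration} for the original argument and reproduce the construction in the notation above, flagging that the only new observation relative to Strassen is bookkeeping: his construction is already monomial, so it yields $\combsubomega$, not merely $\subomega$.
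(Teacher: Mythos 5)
First, a note on what you are being compared against: the paper does not prove this theorem itself — it quotes it from Strassen and, much later, re-derives only the lower-bound half as a corollary of \cref{main} (via \cref{chain} and \cref{kmintwo}). Your lower-bound plan is essentially the right construction and is the same route the paper's \cref{main} generalizes: restrict to a type class, hash the legs with random linear forms, use tightness so the hash values satisfy a linear relation, force them equal with a Salem--Spencer set, and extract a diagonal. One point of detail: the average-free set only forces $b_1(I_1)=b_2(I_2)=b_3(I_3)$; it does not by itself yield a ``genuine GHZ tensor''. The remaining collisions (distinct surviving support elements agreeing in some coordinate) must be removed greedily, with a first-moment estimate showing $\E[X-Y]$ is still of order $2^{N(\min_i H(P_i))-o(N)}$ — this is the paper's \cref{elimprop} together with Claims 1 and 2 in the proof of \cref{main}. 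That is an omission of detail in your sketch, not a wrong idea.

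The genuine gap is in your upper bound. The ``standard fact'' that a diagonal of size $s$ inside $\phi^{\otimes N}$ picks out $s$ tuples of $\supp_B(\phi^{\otimes N})$ with pairwise distinct projections is true only for \emph{monomial} restrictions. A general restriction $(A_1\otimes A_2\otimes A_3)$ forms arbitrary linear combinations of basis vectors, so the image diagonal lives in new bases and need not select any subset of the original support; your counting argument therefore bounds $\combsubomega(\phi)$ but not $\subomega(\phi)$. The inequality $\subomega(\phi)\leq\max_{P}\min_i H(P_i)$ is the hard half of Strassen's theorem: it requires showing that his support functionals (a minimum over all bases of the entropy expression) are monotone under arbitrary restriction and degeneration, which involves analyzing how supports transform under general (in suitable coordinates, upper-triangular) changes of basis, and this is where obliqueness/tightness enters for the upper bound. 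Your parenthetical about Zariski-closedness addresses the passage from restriction to degeneration, but not the passage from monomial to general restriction, which is the actual difficulty. As written, the proposal establishes $\combsubomega(\phi)=\max_P\min_i H(P_i)$ and $\subomega(\phi)\geq\max_P\min_i H(P_i)$, but leaves $\subomega(\phi)\leq\max_P\min_i H(P_i)$ unproved; for that you must either import Strassen's support-functional machinery from \cite{strassen1991degeneration} or supply an equivalent monotonicity argument.
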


Interestingly, \cref{strassentripartite} says that if $\phi$ is a tight tensor in basis~$B$, then the subexponent $\subomega(\phi)$ is a function of $\supp_B \phi$, that is, $\subomega(\phi)$ is independent of the coefficients of $\phi$ in basis $B$.
 This is not in general the case for higher-order tight tensors as we will see in \cref{cex}.

\begin{example}
From \cref{Dtight} we know that
$D_{(1,1,1)}$ is tight. Let~$P$ be the uniform distribution on the support of $D_{(1,1,1)}$. Then each marginal~$P_i$ is uniform on $\{b_1, b_2, b_3\}$. Therefore by \cref{strassentripartite}, we have $\subomega( D_{(1,1,1)}) \geq \combsubomega( D_{(1,1,1)}) \geq \log_2 3$. On the other hand,  $\subomega(D_{(1,1,1)}) \leq \log_2 3$, because the tensor $D_{(1,1,1)}$ has a flattening of rank 3.
\end{example}

\begin{example} From \cref{prodtight} and \cref{tightexample} we know that $\phi = \tens(C_3)$ is tight.
Let~$P$ be the uniform distribution on 
\[
\supp \phi = \{((b_i\otimes b_j), (b_j \otimes b_k), (b_k \otimes b_i))\mid i,j,k\in [2]\}.
\]
Then the marginals $P_i$ are uniform on $\{b_i \otimes b_j \mid i,j \in [2]\}$, and hence the Shannon entropy of each marginal is $H(P_i) = \log_2 4 = 2$. Therefore, by \cref{strassentripartite}, $\subomega( \tens(C_3) ) \geq 2$. On the other hand, $\subomega(\tens(C_3)) \leq 2$, because the tensor $\tens(C_3)$ has a flattening of rank 4.
\end{example}

There are two extensions of \cref{strassentripartite} to certain higher-order tensors, namely to W-state tensors (\cref{wstate}) and to (hyper)graphs with a certain connectedness property (\cref{hyper}), which includes cycle tensors. We begin with the extension to W-state tensors, obtained by generalising a construction of Coppersmith and Winograd and generalising Strassen's support functionals. Define the \defin{binary entropy function} $h(p)$ as $-p\log_2(p) - {(1-p)}\log_2(1-p)$ with $h(0) = h(1) = 0$.

\begin{theorem}[Vrana--Christandl~\cite{vrana2015asymptotic}]\label{wstate} Let $k\geq 2$. Then the subexponent of $D_{(1,k-1)}$ satisfies
$\subomega(D_{(1,k-1)}) = \combsubomega(D_{(1,k-1)})= h(1/k)$.
\end{theorem}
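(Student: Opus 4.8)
The plan is to prove $\subomega(D_{(1,k-1)}) = \combsubomega(D_{(1,k-1)}) = h(1/k)$ by establishing the three inequalities $\subomega(D_{(1,k-1)}) \leq h(1/k)$, $h(1/k) \leq \combsubomega(D_{(1,k-1)})$, and the trivial $\combsubomega \leq \subomega$ (which follows since monomial degeneration is a special case of degeneration, hence $\combomega(\phi,\tens) \geq \omega(\phi,\tens)$). The outer two then pin everything together. First I would handle the \emph{upper bound} $\subomega(D_{(1,k-1)}) \leq h(1/k)$. Since $D_{(1,k-1)} = \sum_{j=1}^k b_1 \otimes \cdots \otimes b_2 \otimes \cdots \otimes b_1$ with the single $b_2$ in position $j$, a natural flattening into one leg versus the other $k-1$ legs has rank at most $\binom{k-1}{\lfloor (k-1)/2 \rfloor}$-type count — more carefully one should compute the support sizes of tensor powers: $\supp(D_{(1,k-1)}^{\otimes n})$ consists of $k$-tuples whose multiset structure in each coordinate is governed by how many of the $n$ copies put the ``$b_2$'' in that coordinate. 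A cleaner route for the upper bound is to use \cref{flat}: take the flattening that groups one leg against the rest, observe its rank equals the number of distinct values the first leg takes in $D_{(1,k-1)}^{\otimes n}$, which is $2^n$ (each coordinate in each copy is either $b_1$ or $b_2$), but weighted; actually the right bound comes from the asymptotic rank of this flattening, giving $\log_2$ of a binomial sum that is $2^{n h(1/k) + o(n)}$ by Stirling. So the main content of the upper bound is a Stirling estimate showing the relevant flattening of $D_{(1,k-1)}^{\otimes n}$ has rank $2^{n h(1/k) + o(n)}$, and then $\subomega \leq \omega(A_\phi) = h(1/k)$ via \cref{flat}.

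Next I would handle the \emph{lower bound} $\combsubomega(D_{(1,k-1)}) \geq h(1/k)$, which is the substantial direction and the main obstacle. Since $D_{(1,k-1)}$ is tight (\cref{Dtight}), one wants a higher-order analogue of Strassen's combinatorial/probabilistic argument from \cref{strassentripartite}. The approach is: fix a parameter $p$ close to $1/k$, consider $D_{(1,k-1)}^{\otimes n}$, and restrict attention to the ``typical'' part of the support — tuples of $n$ copies in which exactly $pn$ (up to lower-order fluctuation) of the copies place the distinguished basis vector $b_2$ in coordinate $i$, for each $i$; by symmetry and a counting/probabilistic pigeonhole argument one isolates a large combinatorial subtensor that monomially degenerates (in fact restricts, via coordinate projections onto a well-chosen subset of basis indices) to a unit tensor $\tens_s$ with $\log_2 s = n h(1/k) - o(n)$. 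The delicate point is that for $k \geq 3$ a tight tensor does \emph{not} automatically have subexponent equal to the max-min-entropy expression (the excerpt explicitly warns of this via \cref{cex}), so one genuinely needs to exploit the special structure of $D_{(1,k-1)}$: namely that its support is the set of tuples with exactly one coordinate equal to $b_2$, which is precisely a ``sunflower''-like / single-defect structure. I expect the argument to go through a Salem–Spencer / corner-free-free-style construction: choosing the index subsets in each leg to be progression-free sets inside $\ZZ/M\ZZ$ (using the tightness map $\alpha$) so that after monomial degeneration only the diagonal terms survive, yielding a unit tensor of the claimed size. This construction-plus-counting is where all the real work lies.

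Finally I would assemble the pieces: the upper bound gives $\subomega(D_{(1,k-1)}) \leq h(1/k)$, the lower bound gives $\combsubomega(D_{(1,k-1)}) \geq h(1/k)$, and since $\combsubomega(\phi) \leq \subomega(\phi)$ always (monomial conversion is a restricted form of conversion, so $\combomega(\phi,\tens)^{-1} \leq \omega(\phi,\tens)^{-1}$), we get $h(1/k) \leq \combsubomega(D_{(1,k-1)}) \leq \subomega(D_{(1,k-1)}) \leq h(1/k)$, forcing all three to equal $h(1/k)$. The only subtlety in the assembly is making sure the upper-bound flattening argument and the lower-bound construction are matched at the \emph{same} value $h(1/k)$; the Stirling estimate $\frac1n \log_2 \binom{n}{n/k} \to h(1/k)$ (since $h(1/k) = \frac1k\log_2 k + \frac{k-1}{k}\log_2\frac{k}{k-1}$) is exactly the common value, so this matches cleanly. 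I would also remark, as the excerpt does in the $D_{(1,1,1)}$ example, that for $k=3$ this recovers $h(1/3) = \log_2 3$, consistent with \cref{strassentripartite} applied to $D_{(1,2)} = D_{(2,1)}$ up to relabeling — a useful sanity check on the constant.
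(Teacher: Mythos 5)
Your overall architecture (upper bound, lower bound, sandwich via $\combsubomega \leq \subomega$) is the right one, but both substantive halves have problems. The upper bound via flattening fails outright: every flattening of $D_{(1,k-1)}$ has rank exactly $2$ (the one-vs-rest flattening of the W-tensor is a matrix with a single row supported on the ``one defect in the other group'' columns plus rows each supported on the ``no defect'' column, hence rank $2$), and flattening commutes with tensor powers while matrix rank is multiplicative, so the corresponding flattening of $D_{(1,k-1)}^{\otimes n}$ has rank exactly $2^n$, not $2^{nh(1/k)+o(n)}$. \cref{flat} therefore only yields $\subomega(D_{(1,k-1)})\leq 1$, which is strictly weaker than $h(1/k)$ for $k\geq 3$. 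Restricting first to the typical part of the support and then flattening does not repair this, because an upper bound on the subrank must apply to the whole tensor, not to a restriction of it. The genuine upper bound needs a different tool; the paper does not reprove it either, but points to Theorem~11 of the cited Vrana--Christandl paper, which uses an entropic argument on the support rather than a flattening. Relatedly, your closing ``sanity check'' is false: $h(1/3)=\log_2 3-\tfrac23\approx 0.918$, not $\log_2 3$; the value $\log_2 3$ is the subexponent of $D_{(1,1,1)}$, a different tensor from $D_{(1,2)}$.

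For the lower bound your sketch is the correct strategy --- typical-type restriction, hashing through the tightness map into an average-free set, then pruning collisions --- and it is essentially what the paper's \cref{main} formalizes. But the step you defer to (``a counting/probabilistic pigeonhole argument isolates a large subtensor'') is exactly where the difficulty lies for $k\geq 4$: one must show that the expected number of surviving pairs of support elements that agree in some coordinate is exponentially smaller than the expected number of surviving elements, and the exponent one can afford depends on the rank $r(R)$ of the set of differences associated to each collision pattern $R$. The paper's proof (\cref{wstate2}) carries this out concretely for the W-tensor: collision patterns reduce to equivalence relations indexed by partitions $\lambda\vdash k$, with $r(R_\lambda)=k-\ell(\lambda)$ and $\max_Q H(Q)=2\log_2 k-H(\lambda/k)$, and an optimization over $\lambda$ shows the binding case is $\ell(\lambda)=2$, which is precisely what produces the value $h(1/k)$ rather than $\log_2 k$. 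Without this computation your construction has no justified output size, so the lower bound is not established either.
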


(The original proof contained a small mistake for which we provide a fix in \cref{appl}.)
It is conjectured in \cite{vrana2015asymptotic} that in general $\subomega(D_{\lambda}) = \combsubomega(D_{\lambda})= H(\lambda/k)$, see also \cref{q3}.

\begin{remark}
In \cite{fu} the Coppersmith--Winograd construction for $k=3$ was used to prove lower bounds on the query complexity of testing triangle-freeness of Boolean functions. This construction was later extended to cover odd-cycle-freeness \cite{haviv}, thus independently proving part of the statement of \cref{wstate}.
\end{remark}

\begin{example}
Let $\phi \in (\CC^2)^{\otimes 3}$ be nonzero. Then one of the following statements holds.
\def\grsizex{1.2em}
\begin{enumerate}
\item $\phi \cong \tens$ and $\subomega(\phi) = 1$.
\item $\phi \cong D_{1,2}$ and $\subomega(\phi) = h(1/3)$.
\item $\phi \cong \tens\bigl(\pbox{4em}{
\begin{tikzpicture}[line width=0.2mm, vertex/.style={
    circle,
    fill=white,
    draw,
    outer sep=0pt,
    inner sep=0.1em}, x=\grsizex, y=\grsizex]
    \path[coordinate] (0,0)  coordinate(A) (1,0) coordinate(B) (2,0) coordinate(C);
	\draw [line width=0.2mm]  (A) node [vertex] {} -- (B) node [vertex] {} (C) node [vertex] {} ;
\end{tikzpicture}}\bigr)$, $\tens\bigl(\pbox{4em}{
\begin{tikzpicture}[line width=0.2mm, vertex/.style={
    circle,
    fill=white,
    draw,
    outer sep=0pt,
    inner sep=0.1em}, x=\grsizex, y=\grsizex]
    \path[coordinate] (0,0)  coordinate(A) (1,0) coordinate(B) (2,0) coordinate(C);
	\draw [line width=0.2mm]  (A) node [vertex] {} edge[bend left] (C) node [vertex] {} (B) node [vertex] {} ;
	\draw (C) node [vertex] {};
\end{tikzpicture}}\bigr)$, $\tens\bigl(\pbox{4em}{
\begin{tikzpicture}[line width=0.2mm, vertex/.style={
    circle,
    fill=white,
    draw,
    outer sep=0pt,
    inner sep=0.1em}, x=\grsizex, y=\grsizex]
    \path[coordinate] (0,0)  coordinate(A) (1,0) coordinate(B) (2,0) coordinate(C);
	\draw [line width=0.2mm]  (A) node [vertex] {}  (B) node [vertex] {} -- (C) node [vertex] {} ;
\end{tikzpicture}}\bigr)$ or $\tens\bigl(\pbox{4em}{
\begin{tikzpicture}[line width=0.2mm, vertex/.style={
    circle,
    fill=white,
    draw,
    outer sep=0pt,
    inner sep=0.1em}, x=\grsizex, y=\grsizex]
    \path[coordinate] (0,0)  coordinate(A) (1,0) coordinate(B) (2,0) coordinate(C);
	\draw [line width=0.2mm]  (A) node [vertex] {}  (B) node [vertex] {} (C) node [vertex] {} ;
\end{tikzpicture}}\bigr)$ and $\subomega(\phi) = 0$.
\end{enumerate}
Indeed, it is well-known that any element $\phi \in (\CC^2)^{\otimes 3}$ is equivalent to precisely one of the six tensors listed above, see  \cite{dur2000three} where $\cong$ is called ``equivalence under SLOCC''. In the terminology of the reference, the first tensor is in the GHZ-class, the second tensor is in the W-class and the remaining classes are called A-BC, AB-C, AC-B and A-B-C. In the first case, by definition $\subomega(\phi) = \omega(\tens, \tens)^{-1} = 1$. In the second case, $\subomega(\phi) = \subomega(D_{1,2}) = h(1/3)$ by \cref{wstate}. In the third case, every representative corresponds to a graph with min-cut of size 0, so $\subomega(\phi) \leq g(\phi) = 0$ (see \cref{graphcut}).
\end{example}

Recall that we defined $g(G)$ to be the size of a minimum cut of $G$ in \cref{graphcut}.
The following theorem is a special case of the result proved in \cite{vrana}.

\begin{theorem}[Vrana--Christandl~\cite{vrana}]\label{hyper}
Let $G$ be a graph. Then
\[
\combsubomega(\tens(G)) = \subomega(\tens(G)) = g(G).
\]
In particular, if $k\geq 3$, then the subexponent of $\tens(C_k)$ satisfies $\subomega(\tens(C_k)) = 2$.
\end{theorem}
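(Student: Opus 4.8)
The plan is to prove the chain $\combsubomega(\tens(G)) \le \subomega(\tens(G)) \le g(G) \le \combsubomega(\tens(G))$, which forces all three quantities to coincide. The first inequality needs no work: a monomial degeneration is in particular a degeneration, so $\combomega(\tens(G),\tens) \ge \omega(\tens(G),\tens)$ and hence $\combsubomega(\tens(G)) \le \subomega(\tens(G))$. The second inequality is the flattening bound of \cref{graphcut}: flattening $\tens(G)$ along a minimum cut $(S,\bar S)$ of $G$ produces a matrix $A$ of rank $2^{g(G)}$, because the two halves of the tensor interact only through the $g(G)$ edges crossing the cut, and \cref{flat} then gives $\subomega(\tens(G)) \le \omega(A) = \log_2 \rank(A) = g(G)$. (If $G$ is disconnected this already yields $\subomega(\tens(G)) = 0 = g(G)$, so from now on I assume $G$ connected with at least one edge.)

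The whole content is therefore the lower bound $g(G) \le \combsubomega(\tens(G))$. Using $\tens(G)^{\otimes N} = \tens_{2^N}(G)$ and \cref{limprop}, this is the statement $\bordercombsubrank(\tens_n(G)) = n^{g(G)-o(1)}$ as $n\to\infty$. I would derive it from the tightness of $\tens(G)$ (\cref{prodtight}) together with the general asymptotic lower bound on the monomial subrank of tight tensors (\cref{main}) specialized to the family $\tens_n(G)$ — equivalently, one may invoke the result of \cite{vrana} directly. The task that remains is to show that the optimization produced by that bound evaluates to $g(G)$ for graph tensors. Concretely, identify $\supp \tens_n(G)$ with the set of edge-colorings $i\colon E\to[n]$, where the $v$th tensor leg of a coloring $i$ carries $(i_e)_{e\ni v}$, and select a family $A$ of colorings that becomes a GHZ tensor of rank $\lvert A\rvert$ after a suitable monomial degeneration. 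Such an $A$ is usable precisely when its colorings cannot be combined nontrivially across $G$: there is no non-constant $\sigma\colon V\to A$ with $i^{(\sigma(u))}_e = i^{(\sigma(v))}_e$ for every edge $e=\{u,v\}$. At one extreme the $n$ colorings that are constant on $E$ already form a usable family, so $\combsubomega(\tens(G)) \ge 1$; the real difficulty is to reach the full rate $g(G)$. Since $G$ is connected, forbidding a bad $\sigma$ reduces to a separation requirement over the cuts of $G$, and the largest rate at which it can be met — by restricting to colorings of a fixed type and applying a Salem--Spencer-type construction inside each type class, exactly as in the proof of \cref{main} — is governed by the smallest cut, i.e.\ $g(G)$.

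The step I expect to be the main obstacle is precisely this last one: pushing the combinatorial selection up to rate exactly $g(G)$ while verifying that the monomial degeneration really does annihilate every off-diagonal term. One cannot simply diagonalize greedily with respect to the uniform distribution on $\supp\tens(G)$ — that would point to the minimum degree $\delta(G)$ as the rate, which can strictly exceed $g(G)$ (for instance when $G$ has a bridge), contradicting the flattening bound — so the type and the combinatorial pattern fed into \cref{main} must be tuned to the bottleneck cut, and one has to check that the resulting pattern is still of size $n^{g(G)-o(1)}$. With the general graph case in place, the ``in particular'' clause is immediate: every cut of $C_k$ for $k\ge 3$ has at least two crossing edges and the minimum cut has exactly two, so $g(C_k)=2$ and hence $\subomega(\tens(C_k)) = \combsubomega(\tens(C_k)) = 2$.
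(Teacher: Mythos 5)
The paper does not actually prove this statement: it is quoted verbatim as a special case of the result of \cite{vrana}, so your remark that ``one may invoke the result of \cite{vrana} directly'' is precisely what the paper does. The two easy inequalities you establish are correct: $\combsubomega(\tens(G)) \le \subomega(\tens(G))$ (note that this step silently needs \cref{restrdegen}, since $\combsubomega$ is defined through degenerations while $\subomega$ is defined through restrictions, so ``a monomial degeneration is a degeneration'' only bounds the degeneration-based rate and one must then pass to restrictions asymptotically), and $\subomega(\tens(G)) \le g(G)$ by flattening along a minimum cut as in \cref{graphcut}.

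The substantive part of your proposal, however, is the attempt to derive the lower bound $g(G) \le \combsubomega(\tens(G))$ from \cref{main}, and there you have a genuine gap which you yourself flag. You never exhibit a distribution $P$ and an analysis of the penalty term $\max_{R}\bigl(\max_Q H(Q) - H(P)\bigr)/r_\alpha(R)$ showing that the optimization in \cref{main} evaluates to at least $g(G)$ for a general graph; in particular you do not control the low-rank relations $R$, for which the ratio can be large even when the full relations $\mathcal{R}_i$ behave well, and your own observation that a naive choice points to the minimum degree rather than the minimum cut shows that the tuning you invoke is exactly the missing content. The paper makes no claim that \cref{main} recovers \cref{hyper}; it only checks \cref{main} against the tripartite case, the W-states and $D_{(2,2)}$, and the proof of \cref{hyper} in \cite{vrana} is a direct combinatorial construction for (hyper)graph tensors rather than a specialization of the entropy bound. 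As written, your argument for the key inequality is a plausible programme, not a proof: either carry out the optimization in \cref{main} for graph tensors in full (which is nontrivial and not established anywhere in this paper), or simply cite \cite{vrana}, as the paper does.
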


\begin{example}\label{cex} Strassen's \cref{strassentripartite} does not generalize to $k\geq4$ by simply replacing the right-hand side of \eqref{strassen} by $\max_P \min\{H(P_1), \ldots, H(P_k)\}$ as the following example shows. Let $\phi$ be the 4-tensor
\def\grsizex{1.2em}
\begin{align*}
\phi &= \tens\bigl(\pbox{4em}{
\begin{tikzpicture}[line width=0.2mm, vertex/.style={
    circle,
    fill=white,
    draw,
    outer sep=0pt,
    inner sep=0.1em}, x=\grsizex, y=\grsizex]
    \path[coordinate] (0,0)  coordinate(A) (1,0) coordinate(B) (2,0) coordinate(C) (3,0) coordinate(D);
	\draw [line width=0.2mm]  (A) node [vertex] {} -- (B) node [vertex] {} (C) node [vertex] {} -- (D) node [vertex] {} ;
\end{tikzpicture}}\bigr) \\
&=(b_0 \!\otimes\! b_0 + b_1\!\otimes\! b_1) \otimes (b_0\!\otimes\!b_0 + b_1\!\otimes\!b_1)\\
 &= b_{0}\!\otimes\! b_{0}\!\otimes\! b_{0}\!\otimes\! b_{0} + b_{0}\!\otimes\! b_{0}\!\otimes\! b_{1}\!\otimes\! b_{1}\\ &\quad+ b_{1}\!\otimes\! b_{1}\!\otimes\! b_{0}\!\otimes\! b_{0} + b_{1}\!\otimes\! b_{1}\!\otimes\! b_{1}\!\otimes\! b_{1}.
\end{align*}
This tensor is tight. Take for example
\begin{alignat*}{2}
\alpha_1 : b_{i} \mapsto \phantom{-}i, &\qquad&\alpha_3 : b_{i} \mapsto \phantom{-}i,\\
\alpha_2 : b_{i} \mapsto -i, &\qquad&\alpha_4 : b_{i} \mapsto -i.
\end{alignat*}
Let $P$ be the uniform distribution on $\supp \phi$. Its marginals $P_i$ are uniform on $\{b_{0},b_{1}\}$ so $H(P_i) = 1$. However, $\subomega(\phi) = 0$, since $\phi$ by construction has a flattening of rank 1.

This example also shows that $\subomega(\phi)$ cannot simply be a function of the support of $\phi$, since, for $0< p < 1$, the tensor
\[
p(b_{0}\!\otimes\! b_{0}\!\otimes\! b_{0}\!\otimes\! b_{0} + b_{0}\!\otimes\! b_{0}\!\otimes\! b_{1}\!\otimes\! b_{1}) + b_{1}\!\otimes\! b_{1}\!\otimes\! b_{0}\!\otimes\! b_{0} + b_{1}\!\otimes\! b_{1}\!\otimes\! b_{1}\!\otimes\! b_{1}
\]
has no flattenings of rank at most $1$, and hence $\subomega(\phi)$ is strictly positive (see Lemma~4 in \cite{vrana2015asymptotic}).
\end{example}

\begin{remark}
\cref{cex} suggests that to generalize \cref{strassentripartite} to $k$-tensors $\phi$ one either has to find a stronger condition when $k\geq4$ to guarantee that $\subomega(\phi)=\max_P \min\{H(P_1), \ldots, H(P_k)\}$, or find a different lower (and possibly upper) bound on the subexponent~$\subomega(\phi)$. Our \cref{main} is a result of the second type.
\end{remark}

\subsection{Our results}

This paper is motivated by the following problem on complete graph tensors. Recall that we defined the \emph{exponent per edge} for the complete graph tensor as $\tau(\tens(K_k)) = \omega(\tens(K_k))/\binom{k}{2}$

\begin{problem}
For $k\geq 4$, what is the value of $\tau(\tens(K_k))$?
\end{problem}

First of all, it is not hard to prove the following bounds on $\tau(\tens(K_k))$. For the complete graph $K_k$, the maximum cut size $f(K_k)$ is $k^2/4$ for even~$k$ and $(k-1)(k+1)/4$ for odd~$k$ (maximum cut is defined in  \cref{graphcut}). Then, flattening $\tens(K_k)$ along a max-cut yields a matrix of rank $2^{f(K_k)}$ and therefore (\cref{flat})
\begin{equation}\label{flateq}
\tau(\tens(K_k)) \geq \frac{f(K_k)}{\binom{k}{2}} = \begin{cases}
1/2+1/(2k) & \textnormal{for odd $k$}\\
1/2+1/(2(k-1)) & \textnormal{for even $k$}.
\end{cases}
\end{equation}
On the other hand, $\tau(\tens(K_3)) = \omega/3$ and thus by \cref{peredge} we have $\tau(\tens(K_k)) \leq \omega/3$ for all $k \geq 3$.
Plugging in the Le Gall upper bound $\omega \leq 2.3728639$ (\cref{thmle2014powers}), yields the ``triangle covering'' upper bound 
\begin{equation}\label{trianglecover}
\tau(\tens(K_k)) \leq 0.790955.
\end{equation}
Our aim is to improve on this upper bound.

Our main result is an upper bound on the exponent per edge of $\tens(K_k)$ that is independent of $\omega$.

\begin{theorem}\label{cwcomplete} Let $k\geq 2$.  For any $q\geq 1$, 
$\tau(\tens(K_k)) \leq \log_q \Bigl(\displaystyle \frac{q + 2}{2^{\combsubomega(D_{(2,k-2)})}} \Bigr)$.
\end{theorem}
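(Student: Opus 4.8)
The plan is to run a higher‑order version of the Coppersmith--Winograd laser method, with $D_{(2,k-2)}$ playing the role of the combinatorial pattern tensor; I take $k\ge 3$, the case $k=2$ being immediate since $\tens(K_2)$ is a nonsingular matrix (so $\tau(\tens(K_2))=1$, while the right‑hand side is $\log_q((q+2)/2^{\combsubomega(D_{(2,0)})})=\log_q(q+2)\ge 1$). For $q\ge 1$, with $e_0,e_1,\dots,e_q$ a basis of $\CC^{q+1}$, define the $k$‑tensor
\[
\mathrm{cw}_q\;\coloneqq\;\sum_{i=1}^{q}\ \sum_{1\le a<b\le k}\ e_0^{\otimes([k]\setminus\{a,b\})}\otimes e_i\otimes e_i\ \in\ (\CC^{q+1})^{\otimes k},
\]
where the explicit $e_i\otimes e_i$ sits in legs $a,b$ and the other $k-2$ legs carry $e_0$. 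This is the natural $k$‑partite analogue of the small Coppersmith--Winograd tensor: $\mathrm{cw}_q$ is the $\lambda^2$‑coefficient of $\sum_{i=1}^q(e_0+\lambda e_i)^{\otimes k}$, and a standard (complex) perturbation argument, adding two correction rank‑one terms to cancel the $\lambda^0$‑ and $\lambda^1$‑parts, gives $\borderrank(\mathrm{cw}_q)\le q+2$ (for $q=1$ one has $\mathrm{cw}_1=D_{(2,k-2)}$, of border rank $3$, so the flattening bound $q+1$ is already not tight). Hence $\borderrank(\mathrm{cw}_q^{\otimes n})\le(q+2)^n$, i.e.\ $\tens_{(q+2)^n}\degengeq \mathrm{cw}_q^{\otimes n}$.

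\textbf{Step 2 (extracting many copies of $\tens(K_k)$).} Give $e_0$ weight $0$ and each $e_i$ with $1\le i\le q$ weight $1$; then every term of $\mathrm{cw}_q$ has leg‑weight vector a permutation of $(1,1,0,\dots,0)$, and these weight vectors are exactly the elements of $\supp(D_{(2,k-2)})$. The homogeneous component of $\mathrm{cw}_q$ attached to the pair $\{a,b\}$, namely $\sum_{i=1}^q e_i\otimes e_i$ on legs $a,b$ with $e_0$ elsewhere, is a copy of $\tens_q$ of the single edge $\{a,b\}$. I apply Strassen's combinatorial criterion for monomial degeneration (Theorem~6.1 in \cite{strassen1987relative}) to $\mathrm{cw}_q^{\otimes n}$: a choice, for each of the $n$ tensor factors, of an edge of $K_k$ is the same as a choice of an element of $\supp(D_{(2,k-2)})^{\times n}=\supp(D_{(2,k-2)}^{\otimes n})$, and a family of such choices that is combinatorially independent in Strassen's sense, and uses each of the $\binom{k}{2}$ edges in exactly $n/\binom{k}{2}$ factors, yields after zeroing out all other components a monomial degeneration
\[
\mathrm{cw}_q^{\otimes n}\ \combdegengeq\ \bigoplus_{j=1}^{m}\ \bigotimes_{e\in E(K_k)}\tens_q(e)^{\otimes n/\binom{k}{2}}\ \cong\ \bigoplus_{j=1}^{m}\tens_{q^{n/\binom{k}{2}}}(K_k),
\]
using the edge‑product form of \cref{tensdef} ($\bigotimes_{e\in E(K_k)}\tens_q(e)^{\otimes\ell}=\tens_{q^{\ell}}(K_k)$). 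The maximal such $m$ is the monomial border subrank of $D_{(2,k-2)}^{\otimes n}$ restricted to the balanced edge‑type; since $D_{(2,k-2)}$ is invariant under the $S_k$‑action on its legs, which is transitive on $\supp(D_{(2,k-2)})$, a symmetrisation (tensoring an optimal‑type witness with its $S_k$‑translates) shows the balanced type is optimal up to a factor $2^{o(n)}$, and with \cref{charac2} and the definition of $\combsubomega$ this gives $m\ge 2^{\combsubomega(D_{(2,k-2)})\,n-o(n)}$. Only the \emph{definition} of $\combsubomega(D_{(2,k-2)})$ is used here; \cref{main} is what later supplies a concrete lower bound on this number.

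\textbf{Step 3 (asymptotic sum inequality).} Combining Steps 1--2 gives $\bigoplus_{j=1}^{m}\tens_{q^{n/\binom{k}{2}}}(K_k)\degenleq\tens_{(q+2)^n}$ with $m\ge 2^{\combsubomega(D_{(2,k-2)})n-o(n)}$. Since $\tens(K_k)$ is invariant under a cyclic permutation of its legs up to isomorphism and satisfies $\tens_N(K_k)^{\otimes t}=\tens_{N^t}(K_k)$, Schönhage's asymptotic sum inequality (the $\tau$‑theorem) applies to direct sums of copies of $\tens(K_k)$ and yields $m\cdot\bigl(q^{n/\binom{k}{2}}\bigr)^{\omega(\tens(K_k))}\le(q+2)^{n+o(n)}$. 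Taking $\log_2$, dividing by $n$ and letting $n\to\infty$ gives $\combsubomega(D_{(2,k-2)})+\tfrac{\omega(\tens(K_k))}{\binom{k}{2}}\log_2 q\le\log_2(q+2)$, which rearranges to $\tau(\tens(K_k))=\omega(\tens(K_k))/\binom{k}{2}\le\log_q\bigl((q+2)/2^{\combsubomega(D_{(2,k-2)})}\bigr)$, as claimed.

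\textbf{Main obstacle.} The delicate part is Step 2: verifying that a combinatorially independent, type‑balanced family of edge‑colourings really produces, after monomial degeneration, an honest \emph{direct sum} of clean copies of $\tens_{q^{n/\binom{k}{2}}}(K_k)$ rather than overlapping or glued pieces---this is exactly where the classical laser method requires a Salem--Spencer‑type device to rule out degenerate collisions between blocks, and where the method‑of‑types bookkeeping and the $S_k$‑symmetrisation to the balanced type enter. A secondary point needing care is the clean verification that $\borderrank(\mathrm{cw}_q)\le q+2$ for every $k$ (the perturbation must be arranged so that the $\lambda^0$‑ and $\lambda^1$‑parts cancel without contaminating the $\lambda^2$‑part with off‑diagonal terms).
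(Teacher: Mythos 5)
Your proposal is correct and follows the same overall strategy as the paper: the same starting tensor (your $\mathrm{cw}_q$ is the paper's $\CW_q^k$, with the same border-rank bound $q+2$ obtained by the same two-correction-term perturbation, cf.\ \cref{CWborder}), the same block decomposition whose outer structure is $D_{(2,k-2)}$, and the same use of $\combsubomega(D_{(2,k-2)})$ to count the blocks surviving a monomial degeneration of the outer structure. The one genuine difference is how the resulting direct sum is converted into a bound on $\tau(\tens(K_k))$. The paper accepts that the surviving blocks are \emph{nonuniform} graph tensors $\tens_f(K_k)$ with $\prod_e f(e)=q^s$ and proves a generalized asymptotic sum inequality (\cref{schonhage}) for edge-transitive graphs that handles such mixed sums; the type-class selection and the symmetrization over the automorphism group happen \emph{inside} that theorem (\cref{symmetrize}, \cref{nonuniformsymm}). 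You instead force all blocks to be the single uniform tensor $\tens_{q^{n/\binom{k}{2}}}(K_k)$ up front, by restricting the outer-structure witness to one edge-type class and tensoring with its $S_k$-translates, and then you only need the identical-summands version of the sum inequality. Both routes work and cost the same $2^{o(n)}$; yours trades the generalized sum inequality for an extra symmetrization in the combinatorial step. Two points to make explicit if you write this up: (i) the $S_k$-symmetrization only balances the edge-type after you first restrict the witness to a single type class (mixed tuples of translates of differently-typed diagonal elements need not have balanced type); since there are only polynomially many types this costs a factor $2^{o(n)}$, but the restriction must be stated; and (ii) the classical $\tau$-theorem is an order-3 statement, so the identical-summands, order-$k$ version you invoke still requires the direct-sum-power lemma $\rank(\phi^{\oplus a})\le b \Rightarrow \rank((\phi^{\otimes s})^{\oplus a})\le\lceil b/a\rceil^s a$ (\cref{powertrick}) together with $\omega(\tens_N(K_k))=\log_2(N)\,\omega(\tens(K_k))$ --- edge-transitivity or cyclic symmetry plays no role once the summands are identical and uniform.
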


Our second result is a lower bound on the monomial subexponent~$\combsubomega(\phi)$ of any tight tensor $\phi$. %

\begin{definition}
Let $\phi$ be a $k$-tensor, $B$ a basis, $\Phi$ the corresponding support as in \cref{supportdef}. Suppose $\phi$ is tight with respect to $\alpha$. For $R\subseteq \Phi\times \Phi$, define $r_\alpha(R)$ to be the rank of the matrix with rows
\[
\{\alpha(x) - \alpha(y) \mid (x,y)\in R\}.
\]
We will denote $r_\alpha(R)$ by $r(R)$ when the actual~$\alpha$ is clear or not important. For any $i\in \{1,\ldots, k\}$, define
\[
\mathcal{R}_i \coloneqq \{(x,y) \in \Phi\times \Phi \mid x_i = y_i\}.
\]
\end{definition}

\begin{theorem}\label{main}
Let $\phi$ be a $k$-tensor, $B$ a basis, $\Phi$ the corresponding support. Suppose $\phi$ is tight with respect to $\alpha$.
Then,
\begin{equation}\label{mainineq}
\combsubomega(\phi) \geq \max_{P\in\optset{P}_{\Phi}}\! \biggl(\!H(P) - (k-2) \max_{\mathclap{R\in \optset{R}_{\Phi}}} \frac{\max_{Q\in \optset{Q}_{\Phi,(P_1, \ldots, P_k),R}} H(Q) - H(P)}{r_\alpha(R)} \biggr),
\end{equation}
where
\begin{itemize}
\item $\optset{P}_{\Phi}$ consists of all probability distributions $P$ on $\Phi$;  and $P_1, \ldots, P_k$ are the marginal distributions of $P$ on the $k$ components respectively;
\item $\optset{R}_{\Phi}$ consists of all subsets $R \subseteq \Phi \times \Phi$ that are not contained in the diagonal set $\Delta_{\Phi}\coloneqq \{(x,x) \mid x\in \Phi\}$, and such that $R\subseteq \mathcal{R}_i$ for some $i\in \{1,\ldots,k\}$.
\item $\optset{Q}_{\Phi, (P_1, \ldots, P_k),R}$ consists of all probability distributions $Q$ on $R$ whose marginals on the $2k$ components of~$R$ satisfy
$Q_i = Q_{k+i} = P_i$ for $i \in \{1,\ldots, k\}$. %
\end{itemize}
\end{theorem}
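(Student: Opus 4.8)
The plan is to establish the lower bound on $\combsubomega(\phi)$ by exhibiting, for a fixed choice of $P \in \optset{P}_\Phi$, an explicit family of monomial degenerations $\phi^{\otimes N} \combdegengeq \tens_{s_N}$ with $s_N = 2^{(\text{RHS})N - o(N)}$, and then invoking \cref{limprop} (the characterization $\combsubomega(\phi) = \lim_m \frac1m \log_2 \bordercombsubrank(\phi^{\otimes m})$). First I would pass to a large tensor power $\phi^{\otimes N}$ and restrict attention to the subtensor supported on the ``typical'' set $\Phi^N_P$ of sequences $(x^{(1)}, \ldots, x^{(N)}) \in \Phi^N$ whose empirical distribution is close to $P$; this set has size $2^{H(P)N - o(N)}$ by the type-counting / asymptotic equipartition estimate, and isolating it costs only a monomial restriction (zeroing out basis vectors). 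The goal then becomes: from this ``$P$-typical block'' of $\phi^{\otimes N}$, extract by a monomial degeneration a unit tensor of rank as large as possible, and the claim is that one can keep all but a $2^{-((k-2)(\cdots))N}$-fraction of the diagonal.

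The heart of the argument is a \emph{combinatorial zeroing-out} step in the style of Strassen's proof of \cref{strassentripartite} and the Coppersmith--Winograd ``asymmetric hashing'' technique, but executed so that only $k-2$ of the $k$ coordinates need to be ``paid for'' — this is exactly where the factor $(k-2)$ in \eqref{mainineq} comes from, and where tightness is used. The tightness functionals $\alpha_i$ give, on $\Phi^N$, a grading by $\sum_j \alpha(x^{(j)})$; on the diagonal this sum controls collisions. For each ``bad'' pair — i.e.\ each pair of diagonal sequences agreeing in some coordinate $i$, which is precisely a pair lying in $\mathcal{R}_i^N$ — one wants to destroy at least one member of the pair. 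The key observation is that a pair agreeing in coordinate $i$ but differing elsewhere corresponds, via its coordinatewise difference, to a point of some $R \subseteq \mathcal{R}_i$ not in the diagonal; choosing a random linear functional on $\ZZ^{r_\alpha(R)}$ (or, equivalently, a random coset in the image of the difference map $x \mapsto \alpha(x)-\alpha(y)$) and keeping only one coset, one kills every such pair while retaining a $q^{-r_\alpha(R)N}$-type fraction — more precisely, after optimizing, a fraction controlled by $2^{-(\max_Q H(Q) - H(P))N / r_\alpha(R)}$, where $Q$ ranges over joint distributions on $R$ with both marginals equal to $(P_1,\ldots,P_k)$ (these $Q$'s count how many colliding pairs of $P$-typical sequences there are of a given ``difference type''). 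Doing this for the worst $R$ and for $k-2$ of the coordinates — the remaining two coordinates being automatically handled because a diagonal pair agreeing in $k-1$ coordinates already agrees in all $k$ by injectivity of the $\alpha_i$ combined with tightness, so controlling $k-2$ suffices to make the surviving diagonal a genuine unit tensor — yields the stated bound. I would formalize the random-restriction count via a probabilistic/greedy argument: a uniformly random monomial restriction of the above form retains in expectation at least $|\Phi^N_P| \cdot \prod 2^{-(\cdots)N}$ diagonal entries with no surviving collisions, so some fixed such restriction achieves it.

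The main obstacle I expect is the bookkeeping that makes the ``$k-2$'' precise: one must argue that after imposing the coset constraints on a carefully chosen set of $k-2$ coordinates, (i) no two surviving diagonal sequences collide in \emph{any} coordinate — using that a collision in one of the two ``free'' coordinates would, together with tightness and injectivity, force a collision detectable in the constrained coordinates — and (ii) the fraction lost is simultaneously bounded by the single worst ratio $\max_R \frac{\max_Q H(Q) - H(P)}{r_\alpha(R)}$ rather than a sum over $R$'s of different ranks, which requires handling all ``difference types'' $R$ uniformly within one hashing scheme (e.g.\ by taking the functional on $\ZZ^{r_\alpha(R^\ast)}$ for the maximizing $R^\ast$ and checking it separates every relevant pair). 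Secondary technical points — that $\bordercombsubrank$ rather than $\combsubrank$ suffices (so Zariski-closure arguments are allowed, easing the zeroing-out), that the $o(N)$ terms from type-counting are negligible, and that a single $\alpha$ can be used for all coordinates after the tensor-power passage (\cref{prodtight}) — are routine and I would dispatch them quickly.
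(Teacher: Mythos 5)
Your overall architecture --- restrict $\phi^{\otimes N}$ to the $P$-typical block, hash using the tightness functionals together with an average-free set, compare surviving elements to surviving collisions in expectation, and finish by deleting one member of each colliding pair --- is indeed the skeleton of the paper's proof. But the step where the theorem actually lives, namely the derivation of the factor $(k-2)$ and of the denominator $r_\alpha(R)$, is wrong in your proposal. You claim $(k-2)$ arises because only $k-2$ of the $k$ coordinates need to be ``paid for,'' the other two being free since a pair agreeing in $k-1$ coordinates agrees in all $k$. That fact is true but irrelevant: a bad pair is one agreeing in at least \emph{one} coordinate, so to make the surviving support a diagonal you must eliminate collisions in every $\mathcal{R}_i$ (two elements can share coordinate $k$ while differing in all others, so nothing is ``automatically handled''), and in the actual proof there is no coordinate-by-coordinate payment at all. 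Instead one uses a single hashing scheme $b_i(I_i) = u_i + \sum_j (I_i)_j v_j$ for $i\le k-1$, with $b_k$ defined so that tightness forces $b_1(I_1)+\cdots+b_{k-1}(I_{k-1}) = (k-1)\,b_k(I_k)$; restricting to hash values in a $(k-1)$-average-free set $B\subseteq \ZZ/M\ZZ$ with $|B|\ge M^{1-\varepsilon}$ forces all $k$ hash values to coincide, so a single support element survives with probability $|B|\,M^{-(k-1)}\approx M^{-(k-2+\varepsilon)}$. That net hashing cost is the sole source of the $(k-2)$.

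Relatedly, your assertion that ``keeping only one coset kills every such pair'' is false: pairs whose two members hash to the same value of $B$ survive, and the entire content of the $\max_R\max_Q$ term is the expected number of such survivors. Obtaining it requires (a) classifying colliding pairs by the set $R$ of difference-pairs occurring among their $N$ rows and by the empirical distribution $Q$ on $R$, which yields the $2^{NH(Q)}$ count subject to the marginal constraints defining $\optset{Q}_{\Phi,(P_1,\ldots,P_k),R}$, and (b) computing the probability that both members of a fixed pair hash to a common $b\in B$, which equals $M^{-(k-1+r)}$ where $r$ is the rank over $\ZZ/M\ZZ$ of the $N\times k$ difference matrix --- a row-reduction of the $(N+k-1)\times 2k$ coefficient matrix of the hash functionals that your proposal does not supply and that is exactly where $r_\alpha(R)$ enters. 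Without these two computations the bound \eqref{mainineq} cannot be assembled, so the proposal has a genuine gap at its central quantitative step. (Two minor points: no degeneration or Zariski closure is used --- the construction is a pure monomial restriction followed by greedy deletion of one member per surviving collision --- and the worry about summing over different $R$ is resolved trivially because $\Phi\times\Phi$ has only constantly many subsets.)
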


(The symbols $\optset{P}$, $\optset{R}$, $\optset{Q}$ are script versions of the letters $P$, $R$ and $Q$.)

The lower bound in \cref{main} extends the lower bound in \cref{strassentripartite}. We will prove this in \cref{appl}.

We will use \cref{main} to compute the (monomial) subexponent of the weight-$(2,2)$ Dicke tensor $D_{(2,2)}$. We give the proof in \cref{appl}.

\begin{corollary}\label{dickecor}
$\combsubomega(D_{(2,2)}) = \subomega(D_{(2,2)}) = 1$.
\end{corollary}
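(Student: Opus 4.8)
The plan is to sandwich $\combsubomega(D_{(2,2)})$ between $1$ from below and from above. The upper bound $\subomega(D_{(2,2)}) \le 1$ is immediate from \cref{graphcut}/\cref{flat}: the tensor $D_{(2,2)}$ has a flattening of rank $2$ (group the four tensor legs as $\{1,2\}$ versus $\{3,4\}$; the resulting matrix has rank $2$, spanned by $b_1\otimes b_1$ and $b_2\otimes b_2$ on the row side), hence $\combsubomega(D_{(2,2)}) \le \subomega(D_{(2,2)}) \le \log_2 2 = 1$. So the content is the matching lower bound $\combsubomega(D_{(2,2)}) \ge 1$, and for this I would apply \cref{main} to the tight tensor $D_{(2,2)}$ with the tightening $\alpha$ from \cref{Dtight} (here $k=4$, $d=2$, $s = \lambda_1\cdot 1 + \lambda_2\cdot 2 = 2 + 4 = 6$; concretely $\alpha_1 = \alpha_2 = \alpha_3 = \mathrm{id}$ on $\{1,2\}$ and $\alpha_4 : b_i \mapsto i - 3$, up to the obvious shift matching \cref{Dtight}).

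The key step is to exhibit a good probability distribution $P$ on the six-element support $\Phi = \supp D_{(2,2)}$ and to control the two inner optimizations in \eqref{mainineq}. The natural choice is $P = $ uniform on $\Phi$, so $H(P) = \log_2 6$, and each marginal $P_i$ is uniform on $\{1,2\}$ (by symmetry of the Dicke support), so $H(P_i) = 1$. With $k=4$ the right-hand side of \eqref{mainineq} becomes
\[
\log_2 6 \;-\; 2\max_{R\in\optset{R}_\Phi} \frac{\max_{Q\in\optset{Q}_{\Phi,(P_1,\dots,P_4),R}} H(Q) \;-\; \log_2 6}{r_\alpha(R)}.
\]
To finish, I want this to be $\ge 1$, i.e.\ I want
\[
\max_{R\in\optset{R}_\Phi} \frac{\max_{Q} H(Q) - \log_2 6}{r_\alpha(R)} \;\le\; \tfrac12(\log_2 6 - 1) = \tfrac12\log_2 3.
\]
The numerator is bounded crudely: $Q$ is a distribution on $R \subseteq \Phi\times\Phi$ with prescribed marginals all equal to the uniform distribution on $\{1,2\}$ in each of the $8$ coordinates, so in particular $H(Q) \le \log_2|R| \le \log_2 36 = 2\log_2 6$; but more usefully, since $Q_i = Q_{k+i} = P_i$ for all $i$, subadditivity of entropy together with these marginal constraints can be used to get a bound of the form $H(Q) \le H(P) + (\text{something controlled by } r_\alpha(R))$. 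The crucial point is that for every admissible $R$ one has $r_\alpha(R) \ge 1$ (by definition $R \not\subseteq \Delta_\Phi$, so at least one row $\alpha(x)-\alpha(y)$ is nonzero), and one must check that whenever $r_\alpha(R)$ is small the corresponding $\max_Q H(Q) - H(P)$ is correspondingly small — this is exactly the mechanism of \cref{main}. So the heart of the computation is a finite case analysis over the subsets $R$ of $\mathcal{R}_1 \cup \mathcal{R}_2 \cup \mathcal{R}_3 \cup \mathcal{R}_4$ (using the symmetry of $D_{(2,2)}$ under permuting the four tensor factors and under swapping $1 \leftrightarrow 2$, which cuts the case analysis down substantially), computing $r_\alpha(R)$ and the entropy-maximizing coupling $Q$ for each.

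The main obstacle I anticipate is the inner optimization $\max_{Q\in\optset{Q}_{\Phi,(P_1,\dots,P_k),R}} H(Q)$: it is a maximum-entropy problem on the transportation polytope of couplings on $R$ with all $2k$ one-dimensional marginals fixed to the uniform bit, and one needs a clean upper bound on its value that degrades gracefully as $r_\alpha(R)$ decreases. Once that bound is in place, plugging $P$ uniform into \eqref{mainineq} and verifying the arithmetic $\log_2 6 - 2\cdot\tfrac12\log_2 3 = \log_2 6 - \log_2 3 = 1$ yields $\combsubomega(D_{(2,2)}) \ge 1$, and combined with the flattening upper bound this gives $\combsubomega(D_{(2,2)}) = \subomega(D_{(2,2)}) = 1$, as claimed.
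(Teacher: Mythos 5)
Your overall strategy is the same as the paper's (uniform $P$ on $\supp D_{(2,2)}$, apply \cref{main}, flattening upper bound), but there are two problems, one minor and one fatal to the proposal as written. The minor one: the flattening $\{1,2\}$ versus $\{3,4\}$ does \emph{not} have rank $2$. Writing out the $4\times 4$ matrix in the basis $b_1\otimes b_1, b_1\otimes b_2, b_2\otimes b_1, b_2\otimes b_2$, it is the permutation-like matrix with a $1$ in positions $(1,4)$, $(4,1)$ and a $2\times2$ all-ones block in the middle; its rank is $3$, which only gives $\subomega(D_{(2,2)})\leq\log_2 3$. The correct choice is the flattening $\{1\}$ versus $\{2,3,4\}$, a $2\times 8$ matrix of rank $2$, which gives the needed bound $\subomega(D_{(2,2)})\leq 1$.

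The fatal problem is that the lower bound is left as a plan rather than executed, and the step you defer is the entire content of the corollary. You correctly reduce to showing $\max_{R}\bigl(\max_Q H(Q)-\log_2 6\bigr)/r_\alpha(R)\leq\tfrac12\log_2 3$, but the bound you have in hand, $H(Q)\leq\log_2|R|\leq\log_2 36$, gives a numerator up to $\log_2 6$ and hence a quotient up to $\log_2 6\approx 2.58$, versus the required $\tfrac12\log_2 3\approx 0.79$. No generic "graceful degradation" bound can close this: the inequality is an \emph{equality} for $R=\mathcal{R}_1$ (where $r(\mathcal{R}_1)=2$ and $\max_Q H(Q)=\log_2 18$), so there is zero slack and the exact finite case analysis is unavoidable. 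The paper carries it out by (i) reducing by symmetry to $R\subseteq\mathcal{R}_1$, (ii) reducing to equivalence relations via \cref{equivrel} and to complement-closed ones via \cref{subset}, (iii) classifying the surviving $R$ by the type of the induced partition of $S=\{(1,1,0,0),(1,0,1,0),(1,0,0,1)\}$ into the two cases $(3)$ with $|R|=18$, $r(R)=2$ and $(2,1)$ with $|R|=10$, $r(R)=1$, (iv) observing that the uniform $Q$ on $R$ has the prescribed marginals and is entropy-maximizing, so $\max_Q H(Q)=\log_2|R|$, and (v) checking $\min\{\log_2 6-\tfrac22(\log_2 18-\log_2 6),\ \log_2 6-\tfrac21(\log_2 10-\log_2 6)\}=\min\{1,\log_2\tfrac{54}{25}\}=1$. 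Until you supply steps (ii)--(v), you do not have a proof.
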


\cref{peredge}, \cref{cwcomplete} with $k=4$ and \cref{dickecor} together directly imply the following upper bound on the exponent per edge of the complete graph tensor $\tens(K_k)$ for any $k\geq4$.

\begin{corollary}\label{cor}
For $k\geq 4$,
$\tau(\tens(K_k)) \leq  \min_{q\geq 2} \log_q \Bigl(\displaystyle \frac{q + 2}{2} \Bigr)=\log_7(9/2)$ which is approximately $0.772943$. %
\end{corollary}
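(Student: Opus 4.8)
The plan is to chain together the three ingredients that are already in hand, so the corollary itself requires only bookkeeping plus a finite one‑variable minimisation. First I would specialise \cref{cwcomplete} to $k=4$: for every admissible $q$ it gives
\[
\tau(\tens(K_4)) \;\leq\; \log_q\!\Bigl(\frac{q+2}{2^{\combsubomega(D_{(2,2)})}}\Bigr).
\]
Next I would substitute the value of the monomial subexponent computed in \cref{dickecor}, namely $\combsubomega(D_{(2,2)})=1$, so that $2^{\combsubomega(D_{(2,2)})}=2$ and the bound collapses to $\tau(\tens(K_4)) \leq \log_q\bigl((q+2)/2\bigr)$.

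It then remains to optimise the function $q\mapsto \log_q\bigl((q+2)/2\bigr)$ over the permissible range. The expression is undefined at $q=1$ and blows up as $q\downarrow 1$ (the numerator tends to $\log_2(3/2)>0$ while the denominator tends to $0$), whereas it tends to $1$ as $q\to\infty$; hence the minimum is attained at an interior point. Evaluating at the admissible values $q=2,3,4,\dots$ one finds it minimised at $q=7$, giving $\log_7(9/2)\approx 0.772943$. I would flag here the one genuinely careful point: the bound in \cref{cwcomplete} is applied with integer $q$ (as in the Coppersmith--Winograd construction), and it is this integrality that pins the optimum down to the clean value $\log_7(9/2)$; over real $q$ the infimum would be a slightly smaller, less tidy number. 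Verifying that $q=7$ beats $q=6$ and $q=8$ is a trivial numerical check, not an obstacle.

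Finally I would invoke the monotonicity of the exponent per edge, \cref{peredge}, which gives $\tau(\tens(K_k))\leq \tau(\tens(K_4))$ for all $k\geq 4$. Combining the three steps yields $\tau(\tens(K_k)) \leq \min_{q\geq 2}\log_q\bigl((q+2)/2\bigr) = \log_7(9/2)$ for every $k\geq 4$, as claimed. There is no real difficulty in the corollary: all the weight is carried by \cref{cwcomplete} (the generalised Coppersmith--Winograd method) and by \cref{dickecor} (the monomial subrank of $D_{(2,2)}^{\otimes N}$, obtained via \cref{main}); what remains is substitution, a finite minimisation, and one application of the covering bound.
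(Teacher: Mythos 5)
Your proposal is correct and follows exactly the route the paper takes: the paper likewise states that \cref{peredge}, \cref{cwcomplete} with $k=4$, and \cref{dickecor} ``together directly imply'' the corollary, and your substitution of $\combsubomega(D_{(2,2)})=1$ followed by the integer minimisation at $q=7$ is precisely the intended bookkeeping. Your remark about the integrality of $q$ and the check that $q=7$ beats $q=6$ and $q=8$ is a correct and sensible addition.
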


Note that $0.772943$ is strictly smaller than the triangle cover upper bound $0.790955$ from \eqref{trianglecover}. 

In the following table we summarize, for small $k$: the flattening lower bound on $\omega(\tens(K_k))$; the upper bound from \cref{cor} on $\omega(\tens(K_k))$ (and Le Gall's upper bound for $k=3$); the trivial upper bound on $\omega(\tens(K_k))$ given by the number of edges; and the resulting bounds on~$\tau(\tens(K_k))$.

\begin{center}
\begin{tabular}{llllllll}
\toprule
$k$ &  \multicolumn{2}{c}{$\omega(\tens(K_k))$} && $\binom{k}{2}$ && \multicolumn{2}{c}{$\tau(\tens(K_k))$}  \\\cmidrule{2-3} \cmidrule{7-8}
    & lower & upper && && lower & upper     \\
\midrule
3 & 2  & 2.37287 && 3  && 0.666666 & 0.790955\\
4 & 4  & 4.63766 && 6  && 0.666666 & 0.772943\\
5 & 6  & 7.72943 && 10 && 0.6      & 0.772943\\
6 & 9  & 11.5942 && 15 && 0.6      & 0.772943\\
7 & 12 & 16.2319 && 21 && 0.571428 & 0.772943\\
8 & 16 & 21.6425 && 28 && 0.571428 & 0.772943\\
9 & 20 & 27.8260 && 36 && 0.555555 & 0.772943\\
10& 25 & 34.7825 && 45 && 0.555555 & 0.772943\\
\bottomrule
\end{tabular}
\end{center}

\subsection{Questions}\label{subsecquestions}

We discuss three open questions related to our results. Our first question is about complete graph tensors. From \eqref{flateq} we know that
\[
\tau(\tens(K_{k+1})) \geq \tau(\tens(K_k)) \geq \frac{(k-1)(k+1)}{4\binom{k}{2}}
\]
holds for all odd $k$. The lower bound goes to $\tfrac12$ when $k$ goes to infinity. On the other hand, if $\omega=2$, then $\tau(\tens(K_k)) \leq \tfrac23$ holds for all~$k\geq 3$ (\cref{peredge}), and in particular $\tau(\tens(K_3)) = \tau(\tens(K_4)) = \tfrac23$. We thus ask the following question.

\begin{question}\label{q1}
Is there a $k\geq 5$ such that $\tau(\tens(K_k)) < \tfrac23$?
\end{question}

Our second question is a more precise version of the above question for general graph tensors.

\begin{question}\label{a2}
Is it true that for every graph $G$, we have $\omega(\tens(G)) = f(G)$?
\end{question}

The answer is known to be yes only for bipartite graphs. (This follows directly from the lower bound in \eqref{cutseq}.) If the answer to \cref{a2} is yes, then the answer to \cref{q1} is yes. For $G = C_3$, this question specializes to the long-standing open question whether the exponent of matrix multiplication $\omega$ equals 2. It might therefore be interesting to ask \cref{a2} with the additional assumption that $\omega=2$. The answer is then known to be yes for all odd cycles (see \eqref{eqoddcycle}), the bipartite graphs, the complete graph~$K_4$ and graphs that are composed of these in a certain (natural) way; namely by taking the disjoint union and then identifying pairs of nonadjacent vertices of choice.

Our third question is about the (monomial) subexponent of Dicke tensors. In \cite{vrana} the upper bound $\subomega(D_{\lambda}) \leq H(\lambda/k)$ was proven for any $\lambda\vdash k$, and the following problem was posed.

\begin{question}\label{q3}
Is it true that for every partition $\lambda\vdash k$, 
\begin{equation}\label{dickeconj}
\combsubomega(D_{\lambda}) = \subomega(D_\lambda) = H(\lambda/k)?
\end{equation}
\end{question}

For $k=3$ this question was affirmatively answered in \cite{strassen1991degeneration} (see \cref{strassentripartite} above), and 
for $\lambda = (k-1,1)$ and any $k\geq 3$ this question was affirmatively answered in \cite{vrana} (see \cref{wstate} above). \cref{main} extends both results and covers more cases, including $\lambda=(2,2)$, see \cref{dickecor}. %
We also numerically confirmed \eqref{dickeconj} for $\lambda = (\ell, \ell)$ and $2\leq \ell \leq 1000$ using \cref{main}. We conjecture that \cref{main} is strong enough to prove the lower bound $\combsubomega(D_\lambda) \geq H(\lambda/k)$ for any~$\lambda \vdash k$.

\subsection{Outline} The rest of this paper is organized as follows. In \cref{seccwcomplete} we will prove \cref{cwcomplete} on the exponent of $\tens(K_k)$. In \cref{secmain} we will prove \cref{main} on the monomial subexponent of tight tensors and we will compute the monomial subexponent of the weight-$(2,2)$ Dicke tensor.

\section{Upper bound on the exponent of the complete graph tensor}\label{seccwcomplete}

The main structure of the proof of \cref{cwcomplete} is a generalization of a construction of Strassen \cite{Strassen:1986:AST} which was improved by Coppersmith and Winograd \cite{coppersmith1987matrix}, which involves finding a ``cheap'' starting tensor, generalizing Schönhage's asymptotic sum inequality \cite{schonhage1981partial} and choosing a good block decomposition.

\subsection{Preliminaries}

We start by discussing border rank in more detail, then we introduce the generalized asymptotic sum inequality and finally we review block decompositions of tensors.

Let $\phi \in V_1\otimes \cdots \otimes V_k$ be a tensor. %

\begin{definition}
Let $\rank_h(\phi)$ be the smallest number $r$ such that there are matrices $A_1(\varepsilon) : \CC^r \to V_1$, \ldots, $A_k(\varepsilon) : \CC^r \to V_k$ with entries in $\CC[\varepsilon]$, such that $(A_1(\varepsilon) \otimes \cdots \otimes A_k(\varepsilon)) \tens_r = \varepsilon^h \phi + \Oh(\varepsilon^{h+1})$. Here, $\Oh(\varepsilon^{h+1})$ denotes any expression of the form $\sum_{i=1}^\ell \varepsilon^{h+i} \phi_i$ with $\phi_i \in V_1\otimes \cdots \otimes V_k$ for some $\ell$.
\end{definition}

\begin{theorem}[{\cite[Theorem~20.24]{burgisser1997algebraic}}]\label{minh}
$\borderrank(\phi) = \min_h \rank_h(\phi)$.
\end{theorem}

It is well-known that rank and border rank are related as follows.

\begin{proposition}\label{borderranktorank}
Let $k,h\in \NN$. Let $c_h = \binom{h+k-1}{k-1}$. 
For all $m_1, \ldots, m_k\in \NN$ and all tensors $\phi\in \CC^{m_1} \otimes \cdots \otimes \CC^{m_k}$, we have $\rank(\phi) \leq c_h \rank_h(\phi)$. Note that for fixed $k$, the number $c_h$ is upper bounded by a polynomial in $h$; $c_h \leq (h+1)^{k-1}$.
\end{proposition}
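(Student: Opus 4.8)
The plan is to prove the standard "border rank to rank" conversion, namely that if $\borderrank(\phi) \leq r$, or more precisely $\rank_h(\phi) \leq r$, then $\rank(\phi) \leq c_h \, r$ with $c_h = \binom{h+k-1}{k-1}$. First I would unpack the definition of $\rank_h(\phi)$: there exist matrices $A_i(\varepsilon) \colon \CC^r \to \CC^{m_i}$ with polynomial entries in $\varepsilon$ such that $(A_1(\varepsilon)\otimes \cdots \otimes A_k(\varepsilon))\,\tens_r = \varepsilon^h \phi + \Oh(\varepsilon^{h+1})$. Writing $\tens_r = \sum_{j=1}^r (b_j)^{\otimes k}$, the left-hand side equals $\sum_{j=1}^r A_1(\varepsilon)b_j \otimes \cdots \otimes A_k(\varepsilon)b_j$, a sum of $r$ simple tensors whose factors are polynomials in $\varepsilon$.

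The key step is to extract the coefficient of $\varepsilon^h$ from this expression and show it is a sum of at most $c_h\, r$ simple tensors. For each fixed $j$, expand $A_i(\varepsilon)b_j = \sum_{t \geq 0} v_{i,j}^{(t)}\varepsilon^t$ with $v_{i,j}^{(t)}\in \CC^{m_i}$. Then the coefficient of $\varepsilon^h$ in $\bigotimes_i A_i(\varepsilon)b_j$ is $\sum_{t_1 + \cdots + t_k = h} v_{1,j}^{(t_1)}\otimes \cdots \otimes v_{k,j}^{(t_k)}$, which is a sum over the number of nonnegative integer solutions of $t_1 + \cdots + t_k = h$, i.e.\ $\binom{h+k-1}{k-1} = c_h$ simple tensors. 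Summing over the $r$ values of $j$ and using that the coefficient of $\varepsilon^h$ on the right-hand side is exactly $\phi$ (the $\Oh(\varepsilon^{h+1})$ term contributes nothing to order $\varepsilon^h$), we conclude $\rank(\phi) \leq c_h \, r$. Applying this with $r = \rank_h(\phi)$ gives the claim. The final remark, that $c_h \leq (h+1)^{k-1}$, follows because $\binom{h+k-1}{k-1}$ counts tuples $(t_1,\ldots,t_{k-1})$ with each $t_i \in \{0,1,\ldots,h\}$ (and $t_k$ determined as long as the sum is at most $h$), so it is bounded by $(h+1)^{k-1}$.

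I do not anticipate a genuine obstacle here; the only thing to be careful about is the bookkeeping with the $\Oh(\varepsilon^{h+1})$ notation — one must note that it denotes a $\CC[\varepsilon]$-combination with strictly higher powers, so reading off the $\varepsilon^h$-coefficient of the identity $(A_1(\varepsilon)\otimes\cdots\otimes A_k(\varepsilon))\tens_r = \varepsilon^h\phi + \Oh(\varepsilon^{h+1})$ isolates $\phi$ cleanly — and the elementary combinatorial identity that the number of weak compositions of $h$ into $k$ parts is $\binom{h+k-1}{k-1}$. Everything else is a routine expansion.
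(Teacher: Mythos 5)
Your proof is correct and is essentially the paper's own argument: both expand the $\varepsilon$-polynomial vectors into homogeneous components, extract the coefficient of $\varepsilon^h$ as a sum of $r\binom{h+k-1}{k-1}$ simple tensors indexed by weak compositions of $h$ into $k$ parts, and bound $\binom{h+k-1}{k-1}\leq (h+1)^{k-1}$ in the same way. No issues.
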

\begin{proof}
Let $\phi$ be a tensor in $\CC^{m_1}\otimes \cdots \otimes \CC^{m_k}$ with $\rank_h(\phi) = r$. Then there are vectors $v_i^j \in (\CC[\varepsilon])^{m_j}$ such that
\[
\sum_{i=1}^r v_i^1 \otimes \cdots \otimes v_i^k = \varepsilon^h \phi + \Oh(\varepsilon^{h+1}).
\]
Without loss of generality the highest power of $\varepsilon$ in each $v_i^j$ is $h$.
Decomposing every $v_i^j$ into $\varepsilon$-homogeneous components $v_i^j = \sum_{a_j=0}^h \varepsilon^{a_j} v_i^j(a_j)$, and collecting powers of $\varepsilon$ gives
\[
\sum_{i=1}^r \sum_{a \in [h]^k} \!\varepsilon^{a_1 + \cdots + a_k}\hspace{0.5em} v_i^1(a_1) \otimes \cdots \otimes v_i^k(a_k) = \varepsilon^h \phi + \Oh(\varepsilon^{h+1}).
\]
Taking only the summands such that $a_1 + \cdots + a_k = h$ gives a rank decomposition of $\phi$. There are $\binom{h+k-1}{k-1} r$ such summands. Therefore, the statement of the proposition holds for $c_h = \binom{h+k-1}{k-1}$ which is at most $(h+1)^{k-1}$.
\end{proof}

As a consequence of \cref{borderranktorank} we can upper bound the exponent of a tensor by the border rank of that tensor. (The following proposition also follows from the more general \cref{restrdegen}.)
\begin{proposition}\label{exponentborder}
Let $\psi$ be a tensor. Then
\[
\omega(\psi) \leq \log_2 \borderrank(\psi).
\]
\end{proposition}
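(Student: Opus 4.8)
The plan is to combine \cref{borderranktorank} with the definition of $\omega(\psi)$ via ranks of tensor powers from \cref{limprop}, using multiplicativity of border rank under tensor product. First I would recall that for any $h$, taking $n$th tensor powers of a border-rank decomposition of $\psi$ achieving $\rank_h(\psi) = \borderrank(\psi)$ produces a decomposition witnessing $\rank_{hn}(\psi^{\otimes n}) \le \borderrank(\psi)^n$; equivalently, $\borderrank(\psi^{\otimes n}) \le \borderrank(\psi)^n$, which is the standard submultiplicativity of border rank. Then by \cref{borderranktorank} applied to $\psi^{\otimes n}$ with $h = hn$ (where $h$ is the degree realizing the border rank of $\psi$), we get
\[
\rank(\psi^{\otimes n}) \le c_{hn}\, \rank_{hn}(\psi^{\otimes n}) \le (hn+1)^{k-1}\, \borderrank(\psi)^n,
\]
since the constant $c_{hn}$ is polynomial in $n$ (with $k$ fixed).

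Next I would feed this into the first formula of \cref{limprop}, namely $\omega(\psi) = \lim_{n\to\infty} \tfrac1n \log_2 \rank(\psi^{\otimes n})$. Taking logarithms,
\[
\tfrac1n \log_2 \rank(\psi^{\otimes n}) \le \tfrac1n\bigl((k-1)\log_2(hn+1) + n\log_2 \borderrank(\psi)\bigr) = \log_2 \borderrank(\psi) + \tfrac{(k-1)\log_2(hn+1)}{n},
\]
and the second term tends to $0$ as $n\to\infty$. Hence $\omega(\psi) \le \log_2 \borderrank(\psi)$, as claimed. Alternatively, one can shortcut this via \cref{charac2}: the bound $\rank(\psi^{\otimes n}) \le 2^{(\log_2 \borderrank(\psi))\, n + o(n)}$ established above directly gives $\omega(\psi) \le \log_2 \borderrank(\psi)$ by definition of $\omega$ as an infimum over valid exponents.

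There is essentially no obstacle here; the only point requiring a little care is the bookkeeping of the degree parameter when taking powers — one must verify that if $\rank_h(\psi) = r$ then $\rank_{hn}(\psi^{\otimes n}) \le r^n$, which follows immediately by multiplying the defining identities $(A_1(\varepsilon)\otimes\cdots\otimes A_k(\varepsilon))\tens_r = \varepsilon^h\psi + \Oh(\varepsilon^{h+1})$ over the $n$ factors and collecting the lowest-order term $\varepsilon^{hn}\psi^{\otimes n}$. Everything else is the polynomial-versus-exponential comparison already packaged in \cref{borderranktorank}. The remark in the statement that this also follows from \cref{restrdegen} is because $\psi \degenleq \tens_r$ with $r = \borderrank(\psi)$ gives $\omega(\tens,\psi) \le \log_2 r$ after passing to the asymptotic restriction version of degeneration.
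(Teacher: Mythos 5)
Your argument is correct and is essentially identical to the paper's proof: both take the $s$th (your $n$th) power of a decomposition witnessing $\rank_h(\psi)=\borderrank(\psi)$ to get $\rank_{hs}(\psi^{\otimes s})\leq \borderrank(\psi)^s$, apply \cref{borderranktorank} with the polynomial constant $c_{hs}$, and let $s\to\infty$ via \cref{limprop}. No gaps.
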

\begin{proof}
Suppose $\borderrank(\psi) = r$. Let $h\in \NN$ such that $\rank_h(\psi) = r$ (\cref{minh}). Then for all~$s\in \NN$, $\rank_{hs}(\psi^{\otimes s}) \leq r^s$. Therefore, $\rank(\psi^{\otimes s}) \leq c_{hs}r^s$ (\cref{borderranktorank}). Thus the exponent $\omega(\psi)$ is at most $\tfrac{1}{s} \log_2 r^s + \tfrac{1}{s} \log_2 c_{hs}$, which converges to $\log_2 r$ when $s$ goes to infinity.
\end{proof}

Let $G = (V,E)$ be a graph and let $f: E\to \NN$ be a function that assigns to every edge a natural number. We define a ``nonuniform'' version of $\tens(G)$ as follows. Define $\tens_f(G)$ by
\[
\tens_f(G) \coloneqq \bigotimes_{e \in E} \sum_{i\in [f(e)]}  (b_i \otimes \cdots \otimes b_i)_e \otimes (1\otimes \cdots \otimes 1)_{V\setminus e}.
\]
Here, the large tensor product inputs and outputs $|V|$-tensors. In algebraic complexity theory, the tensor $\tens_f(C_3)$ with $f(1) = n_1, f(2) = n_2, f(3) = n_3$ is denoted by $\langle n_1, n_2, n_3\rangle$. This tensor corresponds to the bilinear map that multiplies an $n_1\times n_2$ matrix with an $n_2\times n_3$ matrix. We view the set $\{f:E\to \NN\}$ as a group under pointwise multiplication, so that we can write
\begin{equation}\label{kronprod}
\tens_f(G)\otimes \tens_g(G) \cong \tens_{fg}(G).
\end{equation}
Equation \eqref{kronprod} generalises the self-reducibility property of matrix multiplication tensors: $\langle m_1, m_2, m_3 \rangle \otimes \langle m_1, m_2, m_3\rangle \cong \langle m_1 n_1, m_2 n_2, m_3 n_3 \rangle$.

Recall that an \defin{automorphism} of a graph $G = (V,E)$ is a permutation~$\sigma$ of~$V$ such that for all $u,v\in V$ the pair $(u,v)$ is in $E$ if and only if $\sigma \cdot(u,v) \coloneqq (\sigma(u), \sigma(v))$ is in $E$. The automorphisms form a group $\Gamma$ under composition. The group~$\Gamma$ thus acts on $V$ and on $E$. We say $G$ is \defin{edge-transitive} if the action of $\Gamma$ on $E$ is transitive, meaning that for any two edges $e_1, e_2\in E$ there is a permutation $\sigma\in \Gamma$ such that $\sigma\cdot e_1 = e_2$. 

\begin{example}
The automorphism group of the cycle graph $C_k$ is the dihedral group with $2n$ elements. The automorphism group of the complete graph $K_k$ is the symmetric group $S_k$. Both graphs are edge-transitive.
\end{example}

\begin{theorem}[Generalized asymptotic sum inequality]\label{schonhage} Let $G = (V,E)$ be an edge-transitive graph. Suppose $r > p$. Suppose $\phi_1,\ldots, \phi_p$ are tensors from $\{\tens_f(G) \mid f:E\to\NN;\,  \prod_{e\in E} f(e) \geq 2\}$ such that 
\[
\borderrank(\phi_1\oplus \cdots \oplus \phi_p) \leq r.
\]
Define~$\tau$ by $\sum_{i=1}^p (\prod_{e\in E} f_i(e))^\tau = r$.
Then $\tau(\tens(G)) \leq \tau$.

In particular, if $\phi_1, \ldots, \phi_p \in \{\tens_f(G) \mid  f:E\to\NN;\, \prod_{e\in E} f(e) = q\}$ for some integer $q\geq 2$, then we have $\tau(\tens(G)) \leq \log_q (\borderrank(\phi_1 \oplus \cdots \oplus \phi_p)/p)$.
\end{theorem}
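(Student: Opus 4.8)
The plan is to mimic Schönhage's original proof of the $\tau$-theorem (asymptotic sum inequality), adapted to graph tensors via the edge-transitivity hypothesis. The starting observation is that for graph tensors the operation $\otimes$ interacts well with the ``edge-weighting'' group: $\tens_f(G) \otimes \tens_g(G) = \tens_{fg}(G)$, and moreover $\tens_n(G) = \tens_{f}(G)$ when $f \equiv n$, so a weighting with $\prod_{e} f(e) = q$ is, up to permuting legs, a ``partial'' version of a genuine $\tens_n(G)$. First I would set $d_i \coloneqq \prod_{e\in E} f_i(e)$ and pass to a large tensor power $N$: the direct sum $(\phi_1 \oplus \cdots \oplus \phi_p)^{\otimes N}$ expands into $p^N$ summands indexed by words $w \in [p]^N$, where the $w$-summand is $\bigotimes_{j} \phi_{w_j} = \tens_{f_w}(G)$ with $f_w = \prod_j f_{w_j}$ (legs grouped appropriately), hence has ``size'' $\prod_e f_w(e) = \prod_j d_{w_j}$. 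By submultiplicativity of border rank under $\otimes$ and subadditivity under $\oplus$, $\borderrank\bigl((\bigoplus_i \phi_i)^{\otimes N}\bigr) \le r^N$.

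The key combinatorial step is to restrict attention to the words $w$ whose ``type'' (the vector of multiplicities of each letter $i\in[p]$) is close to the optimal distribution: choose the type $\pi = (\pi_1, \ldots, \pi_p)$ that maximizes $\prod_i d_i^{\pi_i}$ subject to $\sum \pi_i = 1$, i.e. the distribution for which, writing $\tau$ as in the statement via $\sum_i d_i^\tau = r$, one has (by a Lagrange/convexity computation) $\pi_i \propto d_i^{\tau}$; there are at least $r^N \cdot 2^{-o(N)}$ such words of this near-optimal type (a standard type-counting estimate), and each contributes a summand of size roughly $\bigl(\prod_i d_i^{\pi_i}\bigr)^N = (r^{1/\tau})^{N} \cdot 2^{o(N)}$... more precisely I would select the type so that the common summand size is $s$ with $s^{\tau} = $ (number of such words), making the bookkeeping balance. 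Now I would invoke the standard fact that if a direct sum of $m$ isomorphic copies of a tensor $\psi$ has border rank $\le R$, then $\borderrank(\psi) \le R/m$ — wait, that is false in general; instead one uses that a restriction of $\bigoplus_i \phi_i^{\otimes N}$ to a single summand shows $\borderrank(\tens_{f_w}(G)) \le r^N$, and then averages: summing $\sum_w (\text{size of }w)^{\tau}$ over the near-optimal words and comparing with $r^N$ via the defining equation of $\tau$.

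The cleanest route, and the one I would actually write, is: by edge-transitivity the automorphism group $\Gamma$ of $G$ acts transitively on $E$, so by symmetrizing a weighting $f_w$ over $\Gamma$ (tensoring the $|\Gamma|$ translates $\tens_{f_w \circ \sigma}(G)$) one converts it into a genuine uniform tensor $\tens_n(G)$ with $n = \bigl(\prod_e f_w(e)\bigr)^{|\Gamma|/|E|} = d_w^{|\Gamma|/|E|}$ (this is exactly where edge-transitivity is needed — to make the product of the translated weights constant on every edge). Combining: $\tens_{n}(G) \degenleq \tens_{r}^{\otimes N|\Gamma|}$ roughly, so $\borderrank(\tens_{d_w}(G)^{\otimes|\Gamma|}) \le r^{N|\Gamma|}$, i.e. $\borderrank(\tens_{d_w}(G)) \le r^N$. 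Feeding in a word $w$ with $d_w = \prod_i d_i^{N\pi_i}$ and taking $N$-th roots gives $\omega(\tens(G)) \le |E|\cdot \log_{(\prod_i d_i^{\pi_i})} r = |E|\cdot\tau$ after using $\sum_i d_i^\tau = r$ and the optimality of $\pi$; dividing by $|E|$ yields $\tau(\tens(G)) \le \tau$. The special case then follows by taking $\phi_1,\ldots,\phi_p$ all of size $q$: then $d_i = q$ for all $i$, $\sum_i q^\tau = r$ gives $q^\tau = r/p$, i.e. $\tau = \log_q(r/p)$, and choosing $r = \borderrank(\phi_1\oplus\cdots\oplus\phi_p)$ gives the stated bound $\tau(\tens(G)) \le \log_q(\borderrank(\bigoplus_i\phi_i)/p)$.

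The main obstacle I anticipate is making the symmetrization/edge-transitivity step fully rigorous: one must check that tensoring the $\Gamma$-translates of a non-uniform graph tensor $\tens_{f}(G)$ really produces $\tens_{f'}(G)$ with $f'(e) = \prod_{\sigma\in\Gamma} f(\sigma^{-1}e)$ (using $\tens_{f}(G)\otimes\tens_g(G)=\tens_{fg}(G)$ and invariance of the graph tensor construction under relabelling vertices), and that this $f'$ is constant precisely because $\Gamma$ acts transitively on edges, with each edge-orbit-stabilizer contributing equally — so $f'(e) = \bigl(\prod_{e'}f(e')\bigr)^{|\Gamma|/|E|}$ independent of $e$. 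The type-counting and the passage from border rank of a large direct sum to border rank of a well-chosen summand are routine (the former is Schönhage's argument, the latter is just restriction to one summand), so the symmetry argument is where the care is needed. I would also need to double check the degeneration bookkeeping $\borderrank(\phi^{\otimes m}) \le \borderrank(\phi)^m$ and $\borderrank(\phi\oplus\psi)\le\borderrank(\phi)+\borderrank(\psi)$, together with \cref{exponentborder} to pass from border rank back to the exponent $\omega$, but these are all already available.
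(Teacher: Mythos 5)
There is a genuine gap at the heart of your argument: you never supply the mechanism that divides the border-rank bound by the \emph{multiplicity} of the selected summand, and this is exactly where the content of the asymptotic sum inequality lives. You correctly note that ``$\borderrank(\psi^{\oplus m})\le R$ implies $\borderrank(\psi)\le R/m$'' is false, but your replacement --- restricting $(\bigoplus_i\phi_i)^{\otimes N}$ to a \emph{single} summand $\tens_{f_w}(G)$ to get $\borderrank(\tens_{f_w}(G))\le r^N$ --- throws away the factor $a=\binom{N}{N\pi}\approx 2^{NH(\pi)}$ counting how many isomorphic copies of that summand appear. With $d_w=\prod_i d_i^{N\pi_i}$ this route only yields $\tau(\tens(G))\le \log_2 r/\bigl(\sum_i\pi_i\log_2 d_i\bigr)$, and no choice of $\pi$ makes this equal to the $\tau$ defined by $\sum_i d_i^\tau=r$: the identity one actually needs is $a\cdot d_w^{\tau}\approx r^N$, i.e.\ $\tau=\log_{d_w}(r^N/a)$, so the division by $a$ is essential. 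In the uniform case $d_i=q$ your argument gives only $\tau(\tens(G))\le\log_q r$ instead of $\log_q(r/p)$ --- that is, it is no better than discarding all but one $\phi_i$. Your ``averaging'' remark ($\sum_w d_w^\tau=r^N$) is just the multinomial identity and does not recover the lost factor.

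The paper closes this gap with \cref{powertrick}: from $\rank(\phi^{\oplus a})\le b$ one deduces $\rank((\phi^{\otimes s})^{\oplus a})\le\lceil b/a\rceil^{s}a$ by induction, using $\phi^{\otimes(s+1)\,\oplus a}=\phi^{\otimes s}\otimes\phi^{\oplus a}\le\phi^{\otimes s}\otimes\tens_b=(\phi^{\otimes s})^{\oplus b}$. Fed into the symmetrization (\cref{nonuniformsymm}), this gives $\tau(\tens(G))\le\log_{D}\lceil b/a\rceil$ with $D=\prod_e f(e)$, which is the bound with $a$ divided out; the hypothesis $r>p$ is then used to make the error terms vanish as $s\to\infty$. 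The rest of your outline --- multinomial expansion, selection of the maximal type, and the edge-transitivity symmetrization converting $\tens_f(G)$ into a power of $\tens(G)$ via $f'(e)=\bigl(\prod_{e'}f(e')\bigr)^{|\Gamma|/|E|}$ --- does match the paper's proof, but without the powertrick (or Sch\"onhage's equivalent $\delta$-perturbation argument) the claimed bound is not reached.
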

Our proof of \cref{schonhage} follows the structure of the proof for the cycle graph case in \cite{buhrman2016nondeterministic} which builds upon the exposition in \cite{blaser2013fast}.

\begin{proposition}\label{symmetrize}
Let $G=(V,E)$ be an edge-transitive graph. Let $f: E\to \NN$ be a function and $N=\prod_{e\in E} f(e)$. Assume that $N\geq 2$. Then,
\[
\tau(\tens(G) ) \leq \log_{N} \borderrank( \tens_f(G) ).
\]
\end{proposition}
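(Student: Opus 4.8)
The strategy is to \emph{symmetrise $\tens_f(G)$ over the automorphism group of $G$} so as to replace the asymmetric dimension assignment $f$ by a uniform one. Write $\Gamma$ for the automorphism group of $G$ and put $r = \borderrank(\tens_f(G))$. For $\sigma \in \Gamma$ let $\sigma \cdot f$ be the function $e \mapsto f(\sigma^{-1}(e))$ on $E$. Applying $\sigma$ simultaneously to the vertices and the edges of $G$ relabels the tensor legs of $\tens_f(G)$ (which are indexed by vertices) and turns it into $\tens_{\sigma\cdot f}(G)$; since permuting tensor legs does not change border rank, $\borderrank(\tens_{\sigma\cdot f}(G)) = r$ for every $\sigma \in \Gamma$. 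The plan is to multiply all of these together, recognise the product as a uniform graph tensor, and then transfer the border rank estimate to the exponent.

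First I would compute the product. Using the identity $\tens_f(G)\otimes\tens_g(G) = \tens_{fg}(G)$ from the preliminaries, we get $\bigotimes_{\sigma\in\Gamma}\tens_{\sigma\cdot f}(G) = \tens_g(G)$ with $g(e) = \prod_{\sigma\in\Gamma}f(\sigma^{-1}(e))$. This is the point at which edge-transitivity is used: by the orbit--stabiliser theorem the assignment $\sigma \mapsto \sigma^{-1}(e)$ takes each value $e' \in E$ exactly $|\Gamma|/|E|$ times, so $g(e) = \bigl(\prod_{e'\in E} f(e')\bigr)^{|\Gamma|/|E|} = N^{|\Gamma|/|E|}$, independently of $e$. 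Writing $m = |\Gamma|/|E|$ (a positive integer, equal to the order of the stabiliser of an edge) and $M = N^m$ (so $M \geq 2$ since $N\geq 2$), this says $\bigotimes_{\sigma\in\Gamma}\tens_{\sigma\cdot f}(G) \cong \tens_M(G)$, and submultiplicativity of border rank under tensor product gives
\[
\borderrank(\tens_M(G)) \leq \prod_{\sigma\in\Gamma} \borderrank(\tens_{\sigma\cdot f}(G)) = r^{|\Gamma|} = r^{m|E|}.
\]

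Next I would transfer this to $\omega(\tens(G))$ exactly as in the proof of \cref{exponentborder}. From $\borderrank(\tens_M(G)^{\otimes s}) = \borderrank(\tens_{M^s}(G)) \leq \borderrank(\tens_M(G))^s$, the border-rank-to-rank conversion \cref{borderranktorank} (whose multiplicative loss $c_{hs}$ is polynomial in $s$), and the monotonicity $\tens_n(G)\leq\tens_{n'}(G)$ for $n \leq n'$ to interpolate between the powers $M^s$ (legitimate because $M\geq 2$), one obtains $\rank(\tens_n(G)) = O\bigl(n^{\log_M \borderrank(\tens_M(G)) + o(1)}\bigr)$, hence $\omega(\tens(G)) \leq \log_M \borderrank(\tens_M(G))$. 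Combining with the previous display,
\[
\omega(\tens(G)) \leq \log_M r^{m|E|} = m|E|\log_{N^m} r = |E| \log_N r,
\]
and dividing by $|E|$ yields $\tau(\tens(G)) \leq \log_N r = \log_N \borderrank(\tens_f(G))$, as desired.

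Each individual step is routine; the one that needs care is the orbit-counting identity $g(e) = N^{|\Gamma|/|E|}$, which is exactly where edge-transitivity is indispensable and is what lets an arbitrary dimension assignment be converted into a uniform one. A secondary point of caution is the interpolation over powers of $M$ in the last step, but that is a verbatim repetition of the argument already given for \cref{exponentborder}.
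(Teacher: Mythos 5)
Your proof is correct and follows essentially the same route as the paper: symmetrise $\tens_f(G)$ over the automorphism group, use edge-transitivity and orbit–stabiliser counting to identify the product with the uniform tensor $\tens_{N^{|\Gamma|/|E|}}(G)$, bound its border rank by $r^{|\Gamma|}$, and transfer to the exponent via \cref{exponentborder}. The only difference is bookkeeping: the paper phrases the last step as $\omega(\tens(G)) = \omega(\tens_{N^C}(G))/(C\log_2 N)$ while you re-derive the equivalent rank interpolation explicitly.
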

\begin{proof}
Let $\Gamma$ be the automorphism group of $G$.
For any edge $e\in E$ define the stabilizer subgroup $\Gamma_e \coloneqq \{\sigma \in \Gamma \mid \sigma \cdot e = e\}$. Since $G$ is edge-transitive all subgroups $\Gamma_e$ are conjugates and thus have the same cardinality, say~$C$. This implies that, for any $e_1, e_2\in E$, the set $\{\sigma \in \Gamma \mid \sigma \cdot e_1 = e_2\}$ has cardinality~$C$. By the orbit-stabilizer theorem the number $C$ satisfies $|\Gamma| = C|E|$.
The group~$\Gamma$ acts naturally on $\tens_f(G)$ by permuting tensor legs. %
The tensor product of all elements $\sigma \cdot \tens_f(G)$ for $\sigma \in \Gamma$ equals $\tens_{N^C}(G)$. We thus have, using \cref{exponentborder},
\begin{align*}
\omega(\tens(G)) &= \frac{\omega(\tens_{N^C}(G))}{C \log_2 N} \\
&\leq \frac{\abs[0]{\Gamma} \log_2 \borderrank(T_f(G))}{C \log_2 N}\\
&= \abs[0]{E} \frac{\log_2 \borderrank(T_f(G))}{\log_2 N}
\end{align*}
finishing the proof. In the above we used $\omega(\tens_r(G)) = \omega(\tens(G)) \log_2 r$.
\end{proof}

We need the following upper bound on the rank of the $a$-fold direct sum of the $s$th tensor power of a tensor. 

\begin{lemma}\label{powertrick} Let $a,b\in \NN_{\geq 1}$.
Let $\phi$ be a tensor such that $\rank(\phi^{\oplus a}) \leq b$. Then for all $s\in \NN$, $\rank((\phi^{\otimes s})^{\oplus a}) \leq \lceil b/a \rceil^s a$.
\end{lemma}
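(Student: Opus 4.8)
The plan is to reduce everything, by submultiplicativity of tensor rank, to a single statement about one copy of $\phi$. Write $m \coloneqq \lceil b/a\rceil$, so that $am \geq b$. I claim it suffices to establish the \emph{single-copy bound}: $\rank(\phi^{\oplus a}) \leq b$ implies $\rank(\phi) \leq m$. Granting this, submultiplicativity of rank under $\otimes$ gives $\rank(\phi^{\otimes s}) \leq \rank(\phi)^s \leq m^s$, and then subadditivity of rank under $\oplus$ — equivalently, submultiplicativity applied to the identity $(\phi^{\otimes s})^{\oplus a} = \tens_a \otimes \phi^{\otimes s}$ together with $\rank(\tens_a) = a$ — gives $\rank\bigl((\phi^{\otimes s})^{\oplus a}\bigr) \leq a\,\rank(\phi^{\otimes s}) \leq a m^s = \lceil b/a\rceil^s a$, which is exactly the asserted inequality. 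So the whole content of the lemma is squeezed into the single-copy bound, and the dependence on $s$ becomes a one-line consequence.

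To prove the single-copy bound I would run a counting argument on a rank decomposition of the direct sum. Fix a decomposition $\phi^{\oplus a} = \sum_{t=1}^{b} v_1^{t}\otimes\cdots\otimes v_k^{t}$ with $v_j^{t}$ in the $j$-th leg $\bigoplus_{i\in[a]} V_j^{(i)}$, and write $v_j^{t,i}\in V_j^{(i)}$ for its block components. For each $i\in[a]$ set $T_i \coloneqq \{\, t : v_j^{t,i}\neq 0 \text{ for all } j \,\}$. Restricting the decomposition to the $i$-th diagonal block yields $\phi = \sum_{t\in T_i} v_1^{t,i}\otimes\cdots\otimes v_k^{t,i}$, so $\rank(\phi) \leq \lvert T_i\rvert$ for every $i$, hence $\rank(\phi) \leq \min_i \lvert T_i\rvert \leq \tfrac1a\sum_i \lvert T_i\rvert = \tfrac1a\sum_{t}\lvert\{\, i : t\in T_i\,\}\rvert$. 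If the decomposition can be chosen so that every rank-one term ``lives in a single summand'', i.e.\ $\sum_t \lvert\{i : t\in T_i\}\rvert \leq b$, then $\rank(\phi) \leq \lceil b/a\rceil$ as wanted.

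The step I expect to be the main obstacle is exactly this last point. A priori $\sum_t\lvert\{i:t\in T_i\}\rvert$ can be as large as $ab$, because a single rank-one term can be simultaneously nonzero in all $k$ legs in several of the $a$ summands. What must be shown is that a minimal (or suitably normalized) rank decomposition of $\phi^{\oplus a}$ can be taken so that no rank-one term has all $k$ legs nonzero in two distinct summands — equivalently, that tensor rank is (at least approximately) additive over identical direct summands, $\rank(\phi^{\oplus a}) \geq a\,\rank(\phi)$. The levers available for this are the off-diagonal vanishing relations $\sum_t v_1^{t,i_1}\otimes\cdots\otimes v_k^{t,i_k}=0$ for all non-constant tuples $(i_1,\dots,i_k)$, which impose enough rigidity on the block components $v_j^{t,i}$ to let one split the decomposition summand by summand; making this splitting precise is the heart of the argument. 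Once the localization is available, the reduction in the first paragraph finishes the proof, and everything else is bookkeeping. (Without the localization one only gets the much weaker bound $\rank\bigl((\phi^{\otimes s})^{\oplus a}\bigr)\leq b^s$, from $(\phi^{\oplus a})^{\otimes s} = (\phi^{\otimes s})^{\oplus a^s}$ and restriction to $a$ of the $a^s$ summands, which is not enough.)
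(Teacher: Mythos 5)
Your reduction collapses the lemma to the single-copy claim that $\rank(\phi^{\oplus a})\leq b$ forces $\rank(\phi)\leq\lceil b/a\rceil$, and you correctly identify that claim as the crux you have not proved. That is a genuine gap, and not a small one: the claim is equivalent to the lower bound $\rank(\phi^{\oplus a})>a(\rank(\phi)-1)$, i.e.\ to an (approximate) additivity of tensor rank over identical direct summands. Additivity of tensor rank under direct sums is Strassen's direct sum conjecture, which is a notoriously hard problem and is in fact \emph{false} in general (Shitov, 2019, gave counterexamples for 3-tensors over $\CC$); even the weakened version you need is not available by elementary means. Your counting argument only yields $\min_i|T_i|\leq\frac1a\sum_t|\{i: t\in T_i\}|$, and as you note the right-hand side can a priori be as large as $b$ rather than $b/a$; the ``off-diagonal vanishing relations'' do not obviously provide the localization of rank-one terms to single summands. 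So the proof as proposed does not go through.

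The paper avoids this entirely by never trying to bound the rank of a single copy of $\phi$. It inducts on $s$, keeping the $a$-fold direct sum bundled: from $\phi^{\oplus a}\leq\tens_b$ one gets
\[
(\phi^{\otimes(s+1)})^{\oplus a}=\phi^{\otimes s}\otimes\phi^{\oplus a}\leq\phi^{\otimes s}\otimes\tens_b=(\phi^{\otimes s})^{\oplus b}\leq\bigl((\phi^{\otimes s})^{\oplus a}\bigr)^{\oplus\lceil b/a\rceil},
\]
and the induction hypothesis bounds the last tensor by $\lceil b/a\rceil^{s}a\cdot\lceil b/a\rceil$. Only the ``easy'' directions — subadditivity under $\oplus$ and monotonicity under restriction — are used, and the $\lceil b/a\rceil$ appears as a per-step amplification factor rather than as a bound on $\rank(\phi)$ itself. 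If you want to salvage your outline, replace the single-copy bound by this inductive bootstrapping; the first paragraph of your reduction is then unnecessary.
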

\begin{proof}
We prove the lemma by induction over $s$. The base case $s=1$ follows from the assumption. For the induction step, we have
\[
(\phi^{\otimes (s+1)})^{\oplus a} = \phi^{\otimes s}\otimes \phi^{\oplus a} \leq  \phi^{\otimes s} \otimes \tens_b = (\phi^{\otimes s})^{\oplus b},
\]
and thus, by the induction hypothesis,
\begin{align*}
\rank((\phi^{\otimes (s + 1)})^{\oplus a}) &\leq \rank((\phi^{\otimes s})^{\oplus b})\\ 
&\leq \rank((\phi^{\otimes s})^{\oplus(a\lceil b/a\rceil)}) = \rank(((\phi^{\otimes s})^{\oplus a})^{\oplus \lceil b/a\rceil})\\ &\leq  \lceil\tfrac{b}{a}\rceil^s a \lceil \tfrac{b}{a} \rceil = \lceil\tfrac{b}{a}\rceil^{s+1} a,
\end{align*}
proving the lemma.
\end{proof}

\cref{powertrick} can equivalently be phrased as follows. Let $\phi$ be a tensor such that $\phi\otimes \tens_a \leq \tens_b$. Then for all $s\in \NN$ we have $\phi^{\otimes s} \otimes \tens_a \leq (\tens_{\ceil{b/a}})^{\otimes s} \otimes \tens_a$.

Formulated in the language of Strassen's semiring of $k$-tensors, \cref{powertrick} says: $a \cdot \phi \leq b \Rightarrow \forall s\, a\cdot \phi^s \leq \ceil{b/a}^s \cdot a$.

\cref{symmetrize} generalizes to the following inequality relating upper bounds on the exponent of a direct power $\tens_f(G)^{\oplus a}$ to upper bounds on the exponent of~$\tens(G)$.

\begin{proposition}\label{nonuniformsymm}
Let $G=(V,E)$ be an edge-transitive graph.
Let $f: E\to \NN$ be a function and $N=\prod_{e\in E} f(e)$. Assume that $N\geq 2$.
Then, $\tau(\tens(G)) \leq \log_{N} \lceil \rank(\tens_f(G)^{\oplus a})/a \rceil$, for any $a\geq 1$.
\end{proposition}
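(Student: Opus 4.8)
The plan is to combine the ``power trick'' of \cref{powertrick} with the symmetrization argument already used in \cref{symmetrize}. The starting point is the hypothesis $\rank(\tens_f(G)^{\oplus a}) \leq b$, where we may assume $b = a\lceil \rank(\tens_f(G)^{\oplus a})/a\rceil$ after rounding, so that $a \mid b$. By \cref{powertrick}, for every $s\in\NN$ we get $\rank((\tens_f(G)^{\otimes s})^{\oplus a}) \leq \lceil b/a\rceil^s a$. Now observe that $\tens_f(G)^{\otimes s}$ is itself of the form $\tens_{f^s}(G)$ (using the group structure on $\{f:E\to\NN\}$ under pointwise multiplication noted in the text), with $\prod_{e\in E} f^s(e) = N^s$.

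Next I would pass from the direct power back to a single tensor and then symmetrize. Since $\rank((\tens_{f^s}(G))^{\oplus a}) \leq \lceil b/a\rceil^s a$, in particular $\rank(\tens_{f^s}(G)) \leq \lceil b/a\rceil^s a$, and hence $\borderrank(\tens_{f^s}(G)) \leq \lceil b/a\rceil^s a$. Applying \cref{symmetrize} to the function $f^s$ (whose edge-product is $N^s\geq 2$) gives
\[
\tau(\tens(G)) \leq \log_{N^s}\bigl(\lceil b/a\rceil^s a\bigr) = \frac{s\log_N \lceil b/a\rceil + \log_N a}{s}.
\]
Letting $s\to\infty$, the term $\log_N a / s$ vanishes and we obtain $\tau(\tens(G)) \leq \log_N \lceil b/a\rceil = \log_N \lceil \rank(\tens_f(G)^{\oplus a})/a\rceil$, which is exactly the claimed bound.

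I do not anticipate a genuine obstacle here, as all the machinery is already in place; the only point requiring a little care is the bookkeeping. One must be sure that \cref{powertrick} is applied with the rounded value $b$ (so that $a\mid b$ and $\lceil b/a\rceil$ is the relevant quantity), and that the identity $\tens_f(G)^{\otimes s} = \tens_{f^s}(G)$ is used correctly so that \cref{symmetrize} applies verbatim to $f^s$ with base $N^s$. The amortization over $s$ is what kills the spurious factor $a$ coming from the direct-sum padding, and this is the conceptual heart of why the bound holds with no additive loss; everything else is routine.
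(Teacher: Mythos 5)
Your proposal is correct and follows essentially the same route as the paper: apply \cref{powertrick} to get $\rank((\tens_{f^s}(G))^{\oplus a}) \leq \lceil b/a\rceil^s a$, feed the resulting (border) rank bound on $\tens_{f^s}(G)$ into \cref{symmetrize} with base $N^s$, and let $s\to\infty$ to amortize away the factor $a$. The preliminary rounding of $b$ to a multiple of $a$ is unnecessary (the ceiling in \cref{powertrick} already handles this) but harmless.
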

\begin{proof}
Let $b=\rank((\tens_f(G))^{\oplus a})$. Then by \cref{powertrick} we have the inequality $\rank((\tens_{f^s}(G))^{\oplus a}) \leq \lceil b/a \rceil^s a$, where $f^s$ denotes taking the pointwise $s$th power.  Therefore, by \cref{symmetrize},
\begin{align*}
\tau(\tens(G)) &\leq \log_{N^s} \rank(\tens_{f^s}(G))\\
&\leq \log_{N^s} \rank((\tens_{f^s}(G))^{\oplus a})\\
&\leq \log_{N^s} \lceil b/a \rceil^s a\\
&= \frac{ s\log_2\lceil \tfrac{b}{a}\rceil + \log_2(a)}{s\log_2(N)},
\end{align*}
which goes to $ \log_N \lceil b/a \rceil$ when $s$ goes to infinity.
\end{proof}

\begin{proof}[\upshape\textbf{Proof of \cref{schonhage}}]
Suppose $r=\borderrank(\bigoplus_{i=1}^p \tens_{f_i}(G))$. This implies that there is an $h\in \NN$ such that $\rank_h(\bigoplus_{i=1}^p \tens_{f_i}(G)) = r$. Taking the $s$th power gives $\rank_{hs}\bigl( (\bigoplus_{i=1}^p \tens_{f_i}(G))^{\otimes s} \bigr) \leq r^s$. We expand the tensor power to get
\[
\rank_{hs}\Bigl( \bigoplus_{\sigma} \Bigl( \tens_{f_1^{\sigma_1}}(G) \otimes \cdots \otimes \tens_{f_p^{\sigma_p}}(G) \Bigr)^{\!\oplus \binom{s}{\sigma}} \Bigr) \leq r^s,
\]
where the first direct sum is over all $p$-tuples $\sigma = (\sigma_1, \ldots, \sigma_p)$ of nonnegative integers with sum $s$ (this is by the multinomial theorem). We can also write this inequality as
\[
\rank_{hs} \Bigl( \bigoplus_{\sigma} \bigl(\tens_{f_1^{\sigma_1}\cdots f_p^{\sigma_p}}(G)\bigr)^{\!\oplus \binom{s}{\sigma}} \Bigr) \leq r^s.
\]
By \cref{borderranktorank} there exists a number $c_{hs}$ which is at most a polynomial in $h$ and $s$ such that
\begin{equation}\label{ub1}
\rank \Bigl( \bigoplus_{\sigma} \bigl(\tens_{f_1^{\sigma_1}\cdots f_p^{\sigma_p}}(G)\bigr)^{\oplus \binom{s}{\sigma}}   \Bigr) \leq c_{hs} r^s.
\end{equation}
Define $\tau$ by $\sum_{i=1}^p \bigl( \prod_{e\in E} f_i(e)  \bigr)^\tau = r$. Then
\begin{equation}\label{eqmultinom}
\sum_\sigma \binom{s}{\sigma}\Bigl( \prod_{e\in E}\prod_{i\in [p]} f_i(e)^{\sigma_i} \Bigr)^{\tau} = r^s,
\end{equation}
by the multinomial theorem again. In this sum, consider the maximum summand and fix $\sigma$ to be the corresponding~$\sigma$ for the remainder of the proof. 

Define $f:E\to\NN$ by $f(e)\coloneqq \prod_i f_i(e)^{\sigma_i}$. Let $a\coloneqq \binom{s}{\sigma}$ and let $b\coloneqq c_{hs} r^s$. Equation \eqref{ub1} implies
$\rank(\tens_f(G)^{\oplus a}) \leq b$ which by  \cref{nonuniformsymm} implies
\begin{equation}\label{taubound}
\tau(\tens(G)) \leq \log_N \bigl\lceil \rank(\tens_f(G)^{\oplus a})/a \bigr\rceil \leq \log_{N} \Bigl\lceil \frac{b}{a} \Bigr\rceil,
\end{equation}
where $N \coloneqq \prod_e f(e)$.
There are $\binom{s+p-1}{p-1} \leq (s+1)^{p-1}$ $p$-tuples $\sigma$ with sum~$s$, so there are that many summands in \eqref{eqmultinom}. We thus lower bound the maximum summand by the average of the summands as follows,
\begin{equation}\label{avgprinciple}
a N^{\tau} \geq \frac{r^s}{\binom{s+p-1}{p-1}} \geq \frac{r^s}{(s+1)^{p-1}}.
\end{equation}
Manipulating \eqref{avgprinciple} gives
\[
\Bigl\lceil \frac{b}{a} \Bigr\rceil \leq \frac{c_{hs} r^s}{a} + 1 \leq N^{\tau} (s+1)^{p-1} 2c_{hs}
\]
which we plug into \eqref{taubound} to get
\begin{align*}
\tau(\tens(G)) &\leq\frac{\tau \log_2 N + (p-1)\log_2(s+1) + \log_2(2c_{hs})}{\log_2 N},\\
&\leq \tau + \frac{(p-1)\log_2(s+1) + \log_2(2c_{hs})}{\log_2 N},
\end{align*}
which goes to $\tau$ when $s$ goes to infinity, because \eqref{avgprinciple} implies
\[
N^\tau \geq \frac{r^s}{(s+1)^{p-1}a} \geq \frac{r^s}{(s+1)^{p-1}p^s}
\]
and therefore $\log_2 N$ is at least $s (\log_2 r - \log_2 p) - (p-1)\log_2(s+1)$, and we assumed $r > p$.
\end{proof}

We are now ready to discuss the final ingredient for the proof of \cref{cwcomplete}, which is a generalization of block decompositions of matrices to block decompositions of tensors.

\begin{definition}[Set partition and tensor partition]
Let $A$ be a finite set. We define a \defin{partition} of $A$ as a collection $\mathcal{A}=\{A^j\}$ of disjoint subsets of~$A$ whose union is $A$.
Let $B_1, \ldots, B_k$ be finite sets, and let $\mathcal{B}_1,\ldots, \mathcal{B}_k$ be partitions of each set respectively. Then we say $\mathcal{B} = \mathcal{B}_1\times \cdots \times \mathcal{B}_k$ is a \defin{product partition} of $B_1\times \cdots \times B_k$.

Let $\phi \in V_1\otimes \cdots \otimes V_k$ be a tensor and $B_1, \ldots, B_k$ orthogonal bases for $V_1, \ldots, V_k$ respectively. Let $\mathcal{B}$ be a product partition of $B_1\times \cdots \times B_k$. Let $I \in \mathcal{B}$. Then we define $\phi_I$ as the orthogonal projection of $\phi$ onto the linear space spanned by~$I$. These smaller tensors $\phi_I$ with $I\in \mathcal{B}$ we think of as making up the \defin{inner structure of~$\phi$ with respect to $\mathcal{B}$}. We define the \defin{outer structure of~$\phi$ with respect to $\mathcal{B}$} to be the tensor~$\phi_{\mathcal{B}}$ with entries indexed by $I\in \mathcal{B}$ such that $\phi_\mathcal{B}$ has a 1 at position $I$ if $\phi_I$ is not the zero tensor, and a 0 otherwise. 
\end{definition}

\begin{definition}
Let $\mathcal{B}$ be a product partition of $B_1\times \cdots \times B_k$ and let $\mathcal{C}$ be a product partition of $C_1 \times \cdots \times C_k$.
Define the product partition $\mathcal{B}\otimes \mathcal{C}$ of $(B_1\times C_1)\times \cdots \times (B_k\times C_k)$ by 
\[
\mathcal{B} \otimes \mathcal{C} \coloneqq \{I \otimes J \mid I\in \mathcal{B}, J\in \mathcal{C}\},
\]
where $I\otimes J = \{((b_1, c_1), \ldots, (b_k, c_k))  \mid b\in I, c\in J\}$.
\end{definition}

The following proposition follows directly from the definition.

\begin{proposition}\label{partitionproduct}
Let $\phi\in U_1\otimes \cdots \otimes U_k$ and $\psi \in V_1\otimes \cdots \otimes V_k$. Let $B_i$ be a basis of $U_i$ and let $C_i$ be basis of $V_i$. Let $\mathcal{B}$ be a product partition of $B_1 \times \cdots \times B_k$ and let $\mathcal{C}$ be a product partition of $C_1\times \cdots \times C_k$. Then,
\[
(\phi \otimes \psi)_{\mathcal{B} \otimes \mathcal{C}} = \phi_{\mathcal{B}} \otimes \psi_{\mathcal{C}},
\]
and $(\phi\otimes \psi)_{I\otimes J} = \phi_I \otimes \psi_J$ for all $I\in \mathcal{B}$, $J\in \mathcal{C}$.
\end{proposition}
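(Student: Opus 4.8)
The plan is to prove the pointwise identity $(\phi\otimes\psi)_{I\otimes J} = \phi_I\otimes\psi_J$ first, by a direct computation in the product basis, and then to deduce the identity for outer structures $(\phi\otimes\psi)_{\mathcal{B}\otimes\mathcal{C}} = \phi_{\mathcal{B}}\otimes\psi_{\mathcal{C}}$ as a formal consequence, since the outer structure is just the ``support indicator'' of the inner structure.

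First I would fix product partitions $\mathcal{B}=\mathcal{B}_1\times\cdots\times\mathcal{B}_k$ of $B_1\times\cdots\times B_k$ and $\mathcal{C}=\mathcal{C}_1\times\cdots\times\mathcal{C}_k$ of $C_1\times\cdots\times C_k$, together with blocks $I=I_1\times\cdots\times I_k\in\mathcal{B}$ and $J=J_1\times\cdots\times J_k\in\mathcal{C}$. Expanding $\phi=\sum_{b}\phi(b)\,b$ and $\psi=\sum_{c}\psi(c)\,c$ in the bases $B_1\times\cdots\times B_k$ and $C_1\times\cdots\times C_k$ (writing $b$ for $b_1\otimes\cdots\otimes b_k$, etc.), the orthogonal projection onto $\Span(I)$ simply truncates the sum, so $\phi_I=\sum_{b\in I}\phi(b)\,b$ and likewise $\psi_J=\sum_{c\in J}\psi(c)\,c$. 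The key observation is that $\{\,b\otimes c \mid b\in B_1\times\cdots\times B_k,\ c\in C_1\times\cdots\times C_k\,\}$ is exactly the basis $(B_1\otimes C_1)\times\cdots\times(B_k\otimes C_k)$ of the space containing $\phi\otimes\psi$, that distinct pairs $(b,c)$ give distinct orthogonal basis vectors $b\otimes c$, and that $b\otimes c$ lies in $I\otimes J$ if and only if $b\in I$ and $c\in J$. Hence $\phi\otimes\psi=\sum_{b,c}\phi(b)\psi(c)\,(b\otimes c)$ is precisely the expansion of $\phi\otimes\psi$ in this product basis, projecting onto $\Span(I\otimes J)$ keeps exactly the terms with $b\in I$ and $c\in J$, and factoring the resulting sum yields $(\phi\otimes\psi)_{I\otimes J}=\bigl(\sum_{b\in I}\phi(b)\,b\bigr)\otimes\bigl(\sum_{c\in J}\psi(c)\,c\bigr)=\phi_I\otimes\psi_J$.

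For the identity on outer structures, I would argue as follows. By definition $\phi_{\mathcal{B}}$ has a $1$ in position $I$ iff $\phi_I\neq 0$, and similarly for $\psi_{\mathcal{C}}$, so $\phi_{\mathcal{B}}\otimes\psi_{\mathcal{C}}$ has a $1$ in position $(I,J)$ iff both $\phi_I\neq 0$ and $\psi_J\neq 0$. On the other hand $(\phi\otimes\psi)_{\mathcal{B}\otimes\mathcal{C}}$ has a $1$ in position $I\otimes J$ iff $(\phi\otimes\psi)_{I\otimes J}\neq 0$, and by the pointwise identity this tensor equals $\phi_I\otimes\psi_J$, which is nonzero iff both factors are nonzero. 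Identifying the index set $\mathcal{B}\otimes\mathcal{C}$ with $\mathcal{B}\times\mathcal{C}$ via the bijection $(I,J)\mapsto I\otimes J$ then shows that the two tensors have $1$'s in the same positions, hence are equal.

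The hard part here is essentially nil: there is no conceptual obstacle, only the bookkeeping of identifying $\{b\otimes c\}$ with the product basis $(B_i\otimes C_i)_i$ and of checking that $(I,J)\mapsto I\otimes J$ is well defined and injective, so that ``position $I\otimes J$'' in $(\phi\otimes\psi)_{\mathcal{B}\otimes\mathcal{C}}$ unambiguously matches ``position $(I,J)$'' in $\phi_{\mathcal{B}}\otimes\psi_{\mathcal{C}}$. The one degenerate case to keep in mind is a partition block that is empty, for which $I\otimes J$ no longer determines $(I,J)$; but such a block contributes a zero inner tensor, so it can be discarded (or such partitions excluded from the start) without affecting either side.
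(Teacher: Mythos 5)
Your proposal is correct and is exactly the routine basis-expansion verification that the paper has in mind when it says the proposition ``follows directly from the definition'' (the paper gives no written proof). Both the pointwise computation and the deduction of the outer-structure identity are sound, and your remark about empty blocks is a reasonable precaution.
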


\subsection{Proof of \cref{cwcomplete}}
We define a modification of the ``Coppersmith--Winograd tensor'', which is a higher-dimensional version of the weight-$(2,k-2)$ Dicke tensor. 

\begin{definition} Let $k,q\geq 2$ be integers. Let $b_0, b_1, \ldots, b_q$ be the standard basis of $\CC^{q+1}$.
Let $\CW_q^k$ be the $k$-tensor
\[
\CW_q^k \coloneqq \sum_{\mathclap{i \in \{0,1,\ldots,q\}^k}} b_{i_1} \otimes \cdots \otimes b_{i_k} \in (\CC^{q+1})^{\otimes k}
\]
where the sum goes over all $i$ that contain exactly two identical non-zero entries. One can also think of this tensor as a sum of tensors $\tens_q(e)$ with $e$ going over the edges of the complete graph $K_k$, that is, using subscripts to denote the position of tensor legs,
\begin{equation}\label{defcw}
\CW_q^k =\!\!\!\!\!\sum_{e \in E(K_k)} \sum_{i=1}^q  (b_i \otimes b_i)_e \otimes (b_0\otimes b_0\otimes \cdots \otimes b_0)_{[k]\setminus e} \in (\CC^{q+1})^{\otimes k}.
\end{equation}
Note that in \eqref{defcw} the sums runs over $i$ from 1 to $q$.
\end{definition}

The $\CW_q^k$-tensor should be thought of as a cheap tensor, in the following border rank sense.

\begin{lemma}\label{CWborder}
The border rank of $\CW_q^k$ is at most $q+2$.
\end{lemma}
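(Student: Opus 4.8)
The goal is to exhibit a border-rank decomposition of $\CW_q^k$ using $q+2$ simple tensors with coefficients in $\CC[\varepsilon]$. The model to imitate is the classical Coppersmith--Winograd construction for $k=3$, where the $q\times q\times q$ Coppersmith--Winograd tensor has border rank $q+2$. The key idea there is to introduce an auxiliary parameter $\varepsilon$ and pick $q+1$ ``generic-point'' simple tensors of the form $(b_0 + \varepsilon v)^{\otimes k}$ (one for each nonzero basis vector $v = b_i$, $i \in [q]$, suitably rescaled) together with one or two ``correction'' terms at $b_0^{\otimes k}$, and then observe that the $\varepsilon^2$-coefficient of the sum is exactly $\CW_q^k$ after the leading-order terms cancel.

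\textbf{Key steps.} First I would write down the candidate decomposition explicitly: for each $i \in \{1,\dots,q\}$ take the simple tensor $(b_0 + \varepsilon b_i)^{\otimes k}$, and add the correction term $-\,q\,b_0^{\otimes k}$ together with one more term of the form $\bigl(b_0 + \varepsilon^2 \textstyle\sum_{i=1}^q \lambda_i b_i\bigr)^{\otimes k}$ or simply $-\,b_0^{\otimes k} + (\text{something})$; the precise count must come out to $q+2$ simple tensors, matching the classical case $k=3$. Second, I would expand each $(b_0+\varepsilon b_i)^{\otimes k}$ by multilinearity into its $\varepsilon$-homogeneous pieces: the $\varepsilon^0$ piece is $b_0^{\otimes k}$; the $\varepsilon^1$ piece is $\sum_{j} b_0 \otimes \cdots \otimes b_i|_{\text{slot }j} \otimes \cdots \otimes b_0$; the $\varepsilon^2$ piece is $\sum_{j<j'} (b_0^{\otimes(k)}$ with $b_i$ in slots $j$ and $j')$, which is precisely $\sum_{i=1}^q\sum_{e\in E(K_k)} (b_i\otimes b_i)_e\otimes (b_0\otimes\cdots\otimes b_0)_{[k]\setminus e} = \CW_q^k$; higher-order pieces are $\Oh(\varepsilon^3)$. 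Third, I would check that the $\varepsilon^0$ and $\varepsilon^1$ contributions cancel: summing over $i$, the $\varepsilon^0$ part gives $q\,b_0^{\otimes k}$, cancelled by the $-q\,b_0^{\otimes k}$ correction; the $\varepsilon^1$ part gives $\sum_j b_0\otimes\cdots\otimes(\sum_i b_i)\otimes\cdots\otimes b_0$, which must be cancelled by the $\varepsilon^1$-contribution of the remaining correction term(s) — this is exactly what forces the correction term to involve $b_0 + \varepsilon(\text{something})$ rather than just $\pm b_0^{\otimes k}$, and dictates its shape. Concluding, one has $(A_1(\varepsilon)\otimes\cdots\otimes A_k(\varepsilon))\tens_{q+2} = \varepsilon^2 \CW_q^k + \Oh(\varepsilon^3)$, so $\rank_2(\CW_q^k)\le q+2$ and hence $\borderrank(\CW_q^k) \le q+2$.

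\textbf{Main obstacle.} The routine bookkeeping (multinomial expansion, matching coefficients) is straightforward; the genuinely delicate point is getting the correction terms right so that \emph{both} the $\varepsilon^0$ and $\varepsilon^1$ orders vanish while using exactly two extra simple tensors (total $q+2$), and verifying this works uniformly in $k$ rather than only for $k=3$. In the $k=3$ case Coppersmith--Winograd use a clever pairing of a $(b_0+\varepsilon b_i)$-type term against a term that carries the opposite first-order contribution; the analogue for general $k$ needs the first-order term $\sum_j b_0\otimes\cdots\otimes b_i\otimes\cdots\otimes b_0$ (which for $k$-tensors is a sum over $k$ slots, not $2$ or $3$) to be absorbed, so I expect the correct correction to be something like $b_0^{\otimes k}$ replaced by $\frac1{\varepsilon}\bigl((b_0 - \varepsilon\sum_i b_i)^{\otimes k}\bigr)$-style cancellation, or an explicit pair $\{-(b_0+\varepsilon\sum_i b_i)^{\otimes k},\ (\text{scalar})\,b_0^{\otimes k}\}$; pinning down this pair and checking no unwanted $\varepsilon^2$ cross-terms appear is where the real care is needed. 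I would double-check the final $\varepsilon^2$-coefficient equals $\CW_q^k$ on the nose (no extra diagonal $b_i\otimes\cdots\otimes b_i$ terms, no missing edges) before declaring victory.
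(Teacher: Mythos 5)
Your overall skeleton is right---$q$ ``generic point'' simple tensors plus two corrections, with $\CW_q^k$ read off as a fixed $\varepsilon$-homogeneous coefficient of the sum---but the one idea that makes the construction actually work is missing, and both candidate correction terms you write down fail. With the single-$\varepsilon$ ansatz, the only correction shape that can cancel the first-order part $\sum_j (\sum_i b_i)_{\{j\}}\otimes(b_0\otimes\cdots\otimes b_0)_{[k]\setminus\{j\}}$ is (up to scalars) $-(b_0+\varepsilon\sum_i b_i)^{\otimes k}$; but its \emph{second}-order coefficient is $-\sum_{j<j'}\sum_{i,i'}$ of ($b_i$ in slot $j$, $b_{i'}$ in slot $j'$, $b_0$ elsewhere), whose diagonal part $i=i'$ exactly cancels the copy of $\CW_q^k$ you just produced and whose off-diagonal part introduces cross terms $b_i\otimes b_{i'}$ with $i\neq i'$ that are not in the support of $\CW_q^k$. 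Your alternative shape $-(b_0+\varepsilon^2\sum_i\lambda_i b_i)^{\otimes k}$ has vanishing $\varepsilon^1$-coefficient, so it cannot cancel the linear part at all (and no multiple of $b_0^{\otimes k}$ can). So the $\varepsilon^2$-coefficient of your proposed sum is not $\CW_q^k$ for any choice of scalars: the cancellation of the linear term and the pollution of the quadratic term are controlled by the same data.

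The missing device is to \emph{separate the degrees}: perturb the correction term at a higher power of $\varepsilon$ than the generic points, so that its quadratic contribution lands strictly above the order at which $\CW_q^k$ is read off. The paper's decomposition is
\[
\sum_{i=1}^q \varepsilon\,(b_0 + \varepsilon^2 b_i)^{\otimes k} \;-\; \Bigl(b_0 + \varepsilon^3 \sum_{i=1}^q b_i\Bigr)^{\otimes k} \;+\; (1-q\varepsilon)\, b_0^{\otimes k}.
\]
Here the first sum contributes $q\varepsilon\, b_0^{\otimes k}$ at order $\varepsilon^1$, the unwanted linear part at order $\varepsilon^{3}$, and $\CW_q^k$ at order $\varepsilon^{5}$; the middle term cancels the $\varepsilon^0$ and $\varepsilon^3$ pieces while its own quadratic part only enters at order $\varepsilon^6$ and hence cannot contaminate the $\varepsilon^5$-coefficient; the last term cancels the remaining $\varepsilon^0$ and $\varepsilon^1$ constants. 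This yields $\varepsilon^5\CW_q^k+\Oh(\varepsilon^6)$ with $q+2$ simple tensors, i.e.\ $\rank_5(\CW_q^k)\le q+2$ and thus $\borderrank(\CW_q^k)\le q+2$. You correctly located the difficulty, but locating it is not the same as resolving it.
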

\begin{proof}
We have
\begin{align*}
& \sum_{i=1}^q \varepsilon\,(b_0 + \varepsilon^2\, b_i)^{\otimes k} - \Bigl(b_0 + \varepsilon^3 \sum_{i=1}^q b_i\Bigr)^{\otimes k} + (1 - q\varepsilon) b_0^{\otimes k}\\
&\quad= \varepsilon^5 \CW_q^k + \mathcal{O}(\varepsilon^6).
\end{align*}
Therefore, $\borderrank(\CW_q^k) \leq q + 2$.
\end{proof}

\begin{customtheorem}{\ref{cwcomplete}}[Repeated]
Let $k\geq 2$.  For any $q\geq 1$, 
\[
\tau(\tens(K_k)) \leq \log_q \Bigl(\displaystyle \frac{q + 2}{2^{\combsubomega(D_{(k-2,2)})}} \Bigr).
\]
\end{customtheorem}

\begin{proof}[\upshape\textbf{Proof of \cref{cwcomplete}}]
Define a product partition $\mathcal{B} = \mathcal{B}_1\times \cdots \times \mathcal{B}_k$ of the standard basis of $(\CC^{q+1})^{\otimes k}$ by $\mathcal{B}_j \coloneqq \{\{b_0\}, \{b_1,\ldots, b_q\}\}$ for all $j$. Then the outer structure $(\CW_q^k)_{\mathcal{B}}$ equals
\[
(\CW_q^k)_{\mathcal{B}} = \sum_{\mathclap{e\in E(K_k)}} (b_1\otimes b_1)_e \otimes (b_0 \otimes b_0 \otimes \cdots \otimes b_0)_{[k]\setminus e}
\]
which is the Dicke tensor $D_{(k-2, 2)}$. (We implicitly reorder $(k-2,2)$ to $(2,k-2)$ when $k\leq 3$.)
The inner structure of $\CW_q^k$ with respect to $\mathcal{B}$ consists of tensors
\[
\sum_{i=1}^q (b_i\otimes b_i)_e \otimes (b_0 \otimes b_0 \otimes \cdots \otimes b_0)_{[k]\setminus e} \quad\textnormal{with $e\in E(K_k)$}.
\]
By \cref{charac2}, we have the following monomial degeneration on the outer structure level: %
\begin{equation}\label{combeq1}
\tens^{\otimes(\combsubomega(D_{(k-2,2)})s - o(s))} \combdegenleq ((\CW_q^k)^{\otimes s})_{\mathcal{B}^{\otimes s}}.
\end{equation}
Since the degeneration in \eqref{combeq1} is monomial, each nonzero entry in the lowest-degree part of the tensor on the left-hand side corresponds to an inner structure tensor of $(\CW_q^k)^{\otimes s}$ with respect to $\mathcal{B}^{\otimes s}$.  Each such inner structure tensor is a product of $s$ inner structure tensors of $\CW_q^k$ with respect to $\mathcal{B}$ (\cref{partitionproduct}). Therefore,
\[
\phi_1 \oplus \cdots \oplus \phi_p \degenleq (\CW_q^k)^{\otimes s}
\]
with $p=2^{\combsubomega(D_{(k-2,2)})s - o(s)}$ and $\phi_i \in \{\tens_f(K_k)\mid \prod_{e\in E} f(e) = q^s\}$.
Then, by \cref{CWborder} we have
\[
\borderrank\bigl( \phi_1 \oplus \cdots \oplus \phi_p \bigr) \leq \borderrank((\CW_q^k)^{\otimes s}) \leq (q+2)^s 
\]
We apply the ``in particular'' case of the generalized asymptotic sum inequality \cref{schonhage} with $r$ defined as $(q+2)^s$ %
to the graph $G = K_k$
to obtain
\begin{align*}
\tau(\tens(K_k)) &\leq \log_{q^s} \frac{(q+2)^s}{2^{ \combsubomega(D_{(k-2,2)})s - o(s)}}\\
&\leq \log_{q} \frac{q+2}{2^{ \combsubomega(D_{(k-2,2)}) - o(1)}}.
\end{align*}
Letting $s$ go to infinity yields the required inequality.
\end{proof}

\paragraph{Summary.} We wish to give a short summary of the generalized Coppersmith--Winograd method as employed above, for $k=4$. Let $K_4=(E,V)$ be the tetrahedron. We aim to get a good upper bound on the tensor rank of a tensor power of the goal tensor:
\newcommand{\mahis}{\hspace{1.1em}=\hspace{1.1em}}
\newcommand{\mahisplus}{\hspace{1.1em}+\hspace{1.1em}}
\newcommand{\mahplus}{\hspace{-0.1em}+}
\newcommand{\mahcomma}{\hspace{-0.1ex}\raisebox{-0.2em}{,}}
\def\grsize{1.2em}
\begin{alignat*}{3}
\textsf{goal tensor} &\mahis  \gtens{\pbox{2em}{
\begin{tikzpicture}[line width=0.2mm, vertex/.style={
    circle,
    fill=white,
    draw,
    outer sep=0pt,
    inner sep=0.1em}, x=\grsize, y=\grsize]
    \path[coordinate] (0,0)  coordinate(A)
                ( 0,1) coordinate(B)
                ( 1,1) coordinate(C)
                ( 1,0) coordinate(D);
	\draw [line width=0.2mm] (A) -- (C);
	\draw [line width=0.2mm] (B) -- (D);
	\draw [line width=0.2mm]  (A) node [vertex] {} -- (B) node [vertex] {} -- (C) node [vertex] {} -- (D) node [vertex] {} -- (A);
\end{tikzpicture}
}}\\[0.5em]
\qquad
&\mahis \bigotimes_{e\in E} \sum_{i\in [2]} (b_i \otimes b_i)_e \otimes (b_0 \otimes b_0)_{V\setminus\{e\}}.\\
\intertext{To this end, we pick the following starting tensor, for which we have a good border rank upper bound (\cref{CWborder}):}
\textsf{starting tensor} &\mahis
 \gtensq{\pbox{1.9em}{
\begin{tikzpicture}[line width=0.2mm, vertex/.style={
    circle,
    fill=white,
    draw,
    outer sep=0pt,
    inner sep=0.1em}, x=\grsize, y=\grsize]
    \path[coordinate] (0,0)  coordinate(A)
                ( 0,1) coordinate(B)
                ( 1,1) coordinate(C)
                ( 1,0) coordinate(D);
	\draw (A) node [vertex] {};
	\draw (B) node [vertex] {};
	\draw (C) node [vertex] {};
	\draw (D) node [vertex] {};
	\draw [line width=0.2mm]  (A) node [vertex] {} -- (B) node [vertex] {};
\end{tikzpicture}
}}
\mahplus
\gtensq{\pbox{1.9em}{
\begin{tikzpicture}[line width=0.2mm, vertex/.style={
    circle,
    fill=white,
    draw,
    outer sep=0pt,
    inner sep=0.1em}, x=\grsize, y=\grsize]
    \path[coordinate] (0,0)  coordinate(A)
                ( 0,1) coordinate(B)
                ( 1,1) coordinate(C)
                ( 1,0) coordinate(D);
	\draw (A) node [vertex] {};
	\draw (B) node [vertex] {};
	\draw (C) node [vertex] {};
	\draw (D) node [vertex] {};
	\draw [line width=0.2mm]  (A) node [vertex] {} -- (C) node [vertex] {};
\end{tikzpicture}
}}
\mahplus
\gtensq{\pbox{1.9em}{
\begin{tikzpicture}[line width=0.2mm, vertex/.style={
    circle,
    fill=white,
    draw,
    outer sep=0pt,
    inner sep=0.1em}, x=\grsize, y=\grsize]
    \path[coordinate] (0,0)  coordinate(A)
                ( 0,1) coordinate(B)
                ( 1,1) coordinate(C)
                ( 1,0) coordinate(D);
	\draw (A) node [vertex] {};
	\draw (B) node [vertex] {};
	\draw (C) node [vertex] {};
	\draw (D) node [vertex] {};
	\draw [line width=0.2mm]  (A) node [vertex] {} -- (D) node [vertex] {};
\end{tikzpicture}
}}\\[0.5em] 
&{}\hspace{1.1em}{}+{}\hspace{1.1em}
\gtensq{\pbox{1.9em}{
\begin{tikzpicture}[line width=0.2mm, vertex/.style={
    circle,
    fill=white,
    draw,
    outer sep=0pt,
    inner sep=0.1em}, x=\grsize, y=\grsize]
    \path[coordinate] (0,0)  coordinate(A)
                ( 0,1) coordinate(B)
                ( 1,1) coordinate(C)
                ( 1,0) coordinate(D);
	\draw (A) node [vertex] {};
	\draw (B) node [vertex] {};
	\draw (C) node [vertex] {};
	\draw (D) node [vertex] {};
	\draw [line width=0.2mm]  (B) node [vertex] {} -- (C) node [vertex] {};
\end{tikzpicture}
}}
\mahplus
\gtensq{\pbox{1.9em}{
\begin{tikzpicture}[line width=0.2mm, vertex/.style={
    circle,
    fill=white,
    draw,
    outer sep=0pt,
    inner sep=0.1em}, x=\grsize, y=\grsize]
    \path[coordinate] (0,0)  coordinate(A)
                ( 0,1) coordinate(B)
                ( 1,1) coordinate(C)
                ( 1,0) coordinate(D);
	\draw (A) node [vertex] {};
	\draw (B) node [vertex] {};
	\draw (C) node [vertex] {};
	\draw (D) node [vertex] {};
	\draw [line width=0.2mm]  (B) node [vertex] {} -- (D) node [vertex] {};
\end{tikzpicture}
}}
\mahplus
\gtensq{\pbox{1.9em}{
\begin{tikzpicture}[line width=0.2mm, vertex/.style={
    circle,
    fill=white,
    draw,
    outer sep=0pt,
    inner sep=0.1em}, x=\grsize, y=\grsize]
    \path[coordinate] (0,0)  coordinate(A)
                ( 0,1) coordinate(B)
                ( 1,1) coordinate(C)
                ( 1,0) coordinate(D);
	\draw (A) node [vertex] {};
	\draw (B) node [vertex] {};
	\draw (C) node [vertex] {};
	\draw (D) node [vertex] {};
	\draw [line width=0.2mm]  (C) node [vertex] {} -- (D) node [vertex] {};
\end{tikzpicture}
}}\\[0.5em]
&\mahis\sum_{e\in E} \sum_{i\in [q]} (b_i \otimes b_i)_e \otimes (b_0 \otimes b_0)_{V\setminus\{e\}}.\\
\intertext{We now choose a product partition of the standard basis for the space that the starting tensor lives in. This partitions the starting tensors into blocks (like we can partition matrices into blocks). The partition we use is $\{\{b_0\}, \{b_1, \ldots, b_q\}\}^{\times 4}$. The partitioned tensor has, besides the zero block, the following blocks:}
\textsf{inner structure} &\mahis \Bigl\{{}\gtensq{\pbox{1.9em}{
\begin{tikzpicture}[line width=0.2mm, vertex/.style={
    circle,
    fill=white,
    draw,
    outer sep=0pt,
    inner sep=0.1em}, x=\grsize, y=\grsize]
    \path[coordinate] (0,0)  coordinate(A)
                ( 0,1) coordinate(B)
                ( 1,1) coordinate(C)
                ( 1,0) coordinate(D);
	\draw (A) node [vertex] {};
	\draw (B) node [vertex] {};
	\draw (C) node [vertex] {};
	\draw (D) node [vertex] {};
	\draw [line width=0.2mm]  (A) node [vertex] {} -- (B) node [vertex] {};
\end{tikzpicture}
}}
\mahcomma
\gtensq{\pbox{1.9em}{
\begin{tikzpicture}[line width=0.2mm, vertex/.style={
    circle,
    fill=white,
    draw,
    outer sep=0pt,
    inner sep=0.1em}, x=\grsize, y=\grsize]
    \path[coordinate] (0,0)  coordinate(A)
                ( 0,1) coordinate(B)
                ( 1,1) coordinate(C)
                ( 1,0) coordinate(D);
	\draw (A) node [vertex] {};
	\draw (B) node [vertex] {};
	\draw (C) node [vertex] {};
	\draw (D) node [vertex] {};
	\draw [line width=0.2mm]  (A) node [vertex] {} -- (C) node [vertex] {};
\end{tikzpicture}
}}
\mahcomma
\gtensq{\pbox{1.9em}{
\begin{tikzpicture}[line width=0.2mm, vertex/.style={
    circle,
    fill=white,
    draw,
    outer sep=0pt,
    inner sep=0.1em}, x=\grsize, y=\grsize]
    \path[coordinate] (0,0)  coordinate(A)
                ( 0,1) coordinate(B)
                ( 1,1) coordinate(C)
                ( 1,0) coordinate(D);
	\draw (A) node [vertex] {};
	\draw (B) node [vertex] {};
	\draw (C) node [vertex] {};
	\draw (D) node [vertex] {};
	\draw [line width=0.2mm]  (A) node [vertex] {} -- (D) node [vertex] {};
\end{tikzpicture}
}}
\mahcomma\\[0.5em]
&\hspace{1.1em}\phantom{{}={}}\hspace{1.1em}\phantom{\Bigl\{}\gtensq{\pbox{1.9em}{
\begin{tikzpicture}[line width=0.2mm, vertex/.style={
    circle,
    fill=white,
    draw,
    outer sep=0pt,
    inner sep=0.1em}, x=\grsize, y=\grsize]
    \path[coordinate] (0,0)  coordinate(A)
                ( 0,1) coordinate(B)
                ( 1,1) coordinate(C)
                ( 1,0) coordinate(D);
	\draw (A) node [vertex] {};
	\draw (B) node [vertex] {};
	\draw (C) node [vertex] {};
	\draw (D) node [vertex] {};
	\draw [line width=0.2mm]  (B) node [vertex] {} -- (C) node [vertex] {};
\end{tikzpicture}
}}
\mahcomma
\gtensq{\pbox{1.9em}{
\begin{tikzpicture}[line width=0.2mm, vertex/.style={
    circle,
    fill=white,
    draw,
    outer sep=0pt,
    inner sep=0.1em}, x=\grsize, y=\grsize]
    \path[coordinate] (0,0)  coordinate(A)
                ( 0,1) coordinate(B)
                ( 1,1) coordinate(C)
                ( 1,0) coordinate(D);
	\draw (A) node [vertex] {};
	\draw (B) node [vertex] {};
	\draw (C) node [vertex] {};
	\draw (D) node [vertex] {};
	\draw [line width=0.2mm]  (B) node [vertex] {} -- (D) node [vertex] {};
\end{tikzpicture}
}}
\mahcomma
\gtensq{\pbox{1.9em}{
\begin{tikzpicture}[line width=0.2mm, vertex/.style={
    circle,
    fill=white,
    draw,
    outer sep=0pt,
    inner sep=0.1em}, x=\grsize, y=\grsize]
    \path[coordinate] (0,0)  coordinate(A)
                ( 0,1) coordinate(B)
                ( 1,1) coordinate(C)
                ( 1,0) coordinate(D);
	\draw (A) node [vertex] {};
	\draw (B) node [vertex] {};
	\draw (C) node [vertex] {};
	\draw (D) node [vertex] {};
	\draw [line width=0.2mm]  (C) node [vertex] {} -- (D) node [vertex] {};
\end{tikzpicture}
}}\Bigr\}\\[0.5em]
&\mahis \Bigl\{ \sum_{i\in [q]} (b_i \otimes b_i)_e \otimes (b_0 \otimes b_0)_{V\setminus\{e\}} \,\,\Big|\,\, e \in E \Bigr\}.\\
\intertext{The following outer structure tensor tells us how the nonzero blocks are positioned, in block coordinates:}
\textsf{outer structure}  &\mahis D_{(2,2)}\\
&\mahis \sum_{e\in E} (b_1 \otimes b_1)_e \otimes (b_0 \otimes b_0)_{V\setminus\{e\}}.
\end{alignat*}
We can degenerate $D_{(2,2)}^{\otimes s}$ to $\sum_{i\in [p]} b_i \otimes b_i \otimes b_i \otimes b_i$ for  $p=2^{\combsubomega(D_{(2,2)})s - o(s)}$ by a monomial degeneration (\cref{defcombsubexp}). This implies that we can degenerate the $s$th power of the starting tensor to a direct sum of nonuniform goal tensors, but such that for each nonuniform goal tensor the product of the edge weights equals $q^s$ (by \cref{partitionproduct}):
\begin{alignat*}{3}
&\gtensf{\pbox{2em}{
\begin{tikzpicture}[line width=0.2mm, vertex/.style={
    circle,
    fill=white,
    draw,
    outer sep=0pt,
    inner sep=0.1em}, x=\grsize, y=\grsize]
    \path[coordinate] (0,0)  coordinate(A)
                ( 0,1) coordinate(B)
                ( 1,1) coordinate(C)
                ( 1,0) coordinate(D);
	\draw [line width=0.2mm] (A) -- (C);
	\draw [line width=0.2mm] (B) -- (D);
	\draw [line width=0.2mm]  (A) node [vertex] {} -- (B) node [vertex] {} -- (C) node [vertex] {} -- (D) node [vertex] {} -- (A);
\end{tikzpicture}
}}{f_1}
\oplus
\gtensf{\pbox{2em}{
\begin{tikzpicture}[line width=0.2mm, vertex/.style={
    circle,
    fill=white,
    draw,
    outer sep=0pt,
    inner sep=0.1em}, x=\grsize, y=\grsize]
    \path[coordinate] (0,0)  coordinate(A)
                ( 0,1) coordinate(B)
                ( 1,1) coordinate(C)
                ( 1,0) coordinate(D);
	\draw [line width=0.2mm] (A) -- (C);
	\draw [line width=0.2mm] (B) -- (D);
	\draw [line width=0.2mm]  (A) node [vertex] {} -- (B) node [vertex] {} -- (C) node [vertex] {} -- (D) node [vertex] {} -- (A);
\end{tikzpicture}
}}{f_2} \oplus \cdots.
\qquad 
\end{alignat*}
We then apply the generalized asymptotic sum inequality \cref{schonhage}, whose construction consists of the following two steps. We first restrict a power of the above tensor to a direct power of a single nonuniform goal tensor
\begin{alignat*}{3}
&\gtensf{\pbox{2em}{
\begin{tikzpicture}[line width=0.2mm, vertex/.style={
    circle,
    fill=white,
    draw,
    outer sep=0pt,
    inner sep=0.1em}, x=\grsize, y=\grsize]
    \path[coordinate] (0,0)  coordinate(A)
                ( 0,1) coordinate(B)
                ( 1,1) coordinate(C)
                ( 1,0) coordinate(D);
	\draw [line width=0.2mm] (A) -- (C);
	\draw [line width=0.2mm] (B) -- (D);
	\draw [line width=0.2mm]  (A) node [vertex] {} -- (B) node [vertex] {} -- (C) node [vertex] {} -- (D) node [vertex] {} -- (A);
\end{tikzpicture}
}}{f}^{\smash{\!\oplus a}}.
\qquad 
\end{alignat*}
Second, another tensor power and symmetrization procedure gives a power of the goal tensor:
\begin{alignat*}{3}
&\gtens{\pbox{1.9em}{
\begin{tikzpicture}[line width=0.2mm, vertex/.style={
    circle,
    fill=white,
    draw,
    outer sep=0pt,
    inner sep=0.1em}, x=\grsize, y=\grsize]
    \path[coordinate] (0,0)  coordinate(A)
                ( 0,1) coordinate(B)
                ( 1,1) coordinate(C)
                ( 1,0) coordinate(D);
	\draw [line width=0.2mm] (A) -- (C);
	\draw [line width=0.2mm] (B) -- (D);
	\draw [line width=0.2mm]  (A) node [vertex] {} -- (B) node [vertex] {} -- (C) node [vertex] {} -- (D) node [vertex] {} -- (A);
\end{tikzpicture}
}}^{\smash{\!\otimes N}}.
\qquad 
\end{alignat*}

\section{Lower bound on the monomial subexponent of tight tensors}\label{secmain}

We first discuss some results on $k$-average-free sets, linear combinations of independent uniformly random variables and types of sequences, that we will use in the proof of \cref{main}. Then we describe a basic procedure to restrict any tensor to a tensor of the form~$\tens_r$ with a monomial restriction. Then, we give the proof for \cref{main}. Next we discuss some computational aspects of \cref{main}. Finally, we give some applications.

\subsection{Preliminaries}

We begin with a result on $k$-average-free sets which is essentially due to Salem and Spencer \cite{salem1942sets}. For a self-contained proof, see \cite[Lemma~10]{vrana2015asymptotic}.

\begin{definition}
A subset $A\subseteq \NN$ is \defin{$k$-average-free} if $a_1,\ldots, a_k,y\in A$ and $a_1 + \cdots + a_k = ky$ implies $a_1 = \cdots = a_k = y$. %
For $N\in \NN$, let $\nu_k(N)$ be the maximum size of a $k$-average-free set in $[N] = \{1,2, \ldots, N\}$.
\end{definition}

\begin{proposition}\label{avgfree}
For any fixed $k$, we have $\nu_k(N) = N^{1-o(1)}$ as $N \to \infty$.
\end{proposition}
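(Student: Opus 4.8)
The matching upper bound $\nu_k(N) \le N$ being trivial, the plan is to prove $\nu_k(N) \ge N^{1-o(1)}$ by the Behrend-type construction (this is essentially \cite[Lemma~10]{vrana2015asymptotic}; I sketch it for completeness). First I would fix parameters $m, d \in \NN$ and set $B \coloneqq km$. To a vector $x = (x_0, \ldots, x_{d-1}) \in \{0, 1, \ldots, m-1\}^d$ associate the integer $\iota(x) \coloneqq \sum_{i=0}^{d-1} x_i B^i \in [0, B^d)$. The crucial observation is that addition is carry-free in the relevant range: if $x^{(1)}, \ldots, x^{(k)}, y \in \{0,\ldots,m-1\}^d$, then every coordinate of $x^{(1)} + \cdots + x^{(k)}$ and of $ky$ is at most $k(m-1) < B$, so $\sum_j \iota(x^{(j)}) = \sum_i \bigl(\sum_j x^{(j)}_i\bigr) B^i$ and $k\,\iota(y) = \sum_i (ky_i) B^i$ are genuine base-$B$ expansions. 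Consequently the integer identity $\iota(x^{(1)}) + \cdots + \iota(x^{(k)}) = k\,\iota(y)$ holds if and only if the vector identity $x^{(1)} + \cdots + x^{(k)} = ky$ holds in $\RR^d$.

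Next I would restrict to a Euclidean sphere. For $R \in \{0, 1, \ldots, d(m-1)^2\}$ put $S_R \coloneqq \{x \in \{0,\ldots,m-1\}^d : \sum_i x_i^2 = R\}$ and $A_R \coloneqq \{\iota(x) : x \in S_R\} \subseteq [0,B^d)$. Suppose $\iota(x^{(1)}), \ldots, \iota(x^{(k)}), \iota(y) \in A_R$ satisfy $\iota(x^{(1)}) + \cdots + \iota(x^{(k)}) = k\,\iota(y)$. By the previous paragraph $y = \tfrac1k\bigl(x^{(1)} + \cdots + x^{(k)}\bigr)$ in $\RR^d$, hence $\sqrt{R} = \lVert y\rVert_2 \le \tfrac1k \sum_{j=1}^k \lVert x^{(j)}\rVert_2 = \sqrt{R}$; equality in the triangle inequality for the Euclidean norm forces the vectors $x^{(1)}, \ldots, x^{(k)}$, which all have norm $\sqrt R$, to be equal, and then all equal to $y$. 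Thus $A_R + 1$ is a $k$-average-free subset of $[B^d]$ for every choice of $R$ (translation preserves $k$-average-freeness).

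Finally I would optimize the parameters. Since $\{0,\ldots,m-1\}^d = \bigsqcup_R S_R$ has $m^d$ elements partitioned into at most $d(m-1)^2 + 1 \le dm^2$ classes, some $R$ satisfies $|S_R| \ge m^d / (d m^2)$. Writing $N = B^d = (km)^d$, this produces a $k$-average-free subset of $[N]$ of size at least $m^d/(dm^2) = N^{\log m/\log(km)}/(dm^2)$. Given $\varepsilon > 0$, choose $m$ so large (depending only on $k$ and $\varepsilon$) that $\log m / \log(km) > 1 - \varepsilon$; then, as $d \to \infty$ along $N = (km)^d$ and using $d = \log N/\log(km) = O(\log N)$, we get $\nu_k(N) \ge N^{1-\varepsilon}/(dm^2) \ge N^{1-2\varepsilon}$ for $N$ large. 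Monotonicity of $\nu_k$ together with the fact that every $N$ lies within a constant factor $km$ of some power $(km)^d$ extends the bound to all $N$. Since $\varepsilon$ is arbitrary, this gives $\nu_k(N) = N^{1-o(1)}$.

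The conceptual heart of the argument is the sphere / strict-convexity trick — that a Euclidean sphere contains no nontrivial solution of $x^{(1)} + \cdots + x^{(k)} = ky$ — generalizing Behrend's construction from the case $k = 2$; this is the only genuinely non-routine step. Everything else is the carry-free bookkeeping that transports the statement from $\RR^d$ to the integers, together with the elementary parameter chase needed to land on the exponent $1 - o(1)$ uniformly in $N$ rather than merely along the subsequence $N = (km)^d$.
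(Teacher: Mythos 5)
Your construction is correct, and it is self-contained where the paper is not: the paper offers no proof of \cref{avgfree}, deferring entirely to Salem--Spencer and to \cite[Lemma~10]{vrana2015asymptotic}. All the steps check out: the choice $B=km$ makes the digit sums $\sum_j x^{(j)}_i \le k(m-1) < B$ and $ky_i < B$, so the passage between the integer equation $\sum_j \iota(x^{(j)}) = k\,\iota(y)$ and the vector equation in $\RR^d$ is legitimate (and $\iota$ is injective on $\{0,\dots,m-1\}^d$, so equality of images gives equality of preimages); the equality case of the triangle inequality on the sphere $\sum_i x_i^2 = R$ does force $x^{(1)}=\cdots=x^{(k)}=y$, including the degenerate case $R=0$; translation by $1$ preserves $k$-average-freeness and lands the set inside $[B^d]$; and the pigeonhole over the at most $dm^2$ values of $R$ together with the monotonicity of $\nu_k$ and the density of the powers $(km)^d$ gives the bound for all $N$. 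One remark on provenance: what you use is Behrend's sphere trick (strict convexity of the Euclidean ball), whereas the paper attributes the result to Salem and Spencer, whose original construction is digit-combinatorial rather than geometric; both routes give $N^{1-o(1)}$, which is all the paper needs, but your argument can be pushed (by optimizing $m$ as a function of $N$ rather than holding it fixed) to the quantitatively stronger $\nu_k(N) \ge N\exp(-O(\sqrt{\log N}))$.
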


The following statement about linear combinations of independent uniformly random variables is standard.

\begin{proposition}\label{uniflem} Let $M$ be a prime.
Let $u_1, \ldots, u_n$ be independent uniformly distributed random variables in $\ZZ/M\ZZ$. Let $b_1, \ldots, b_m$ be $(\ZZ/M\ZZ)$-linear combinations of $u_1,\ldots,u_n$.
Then the vector $b=(b_1, \ldots, b_m)$ is uniformly randomly distributed on the range of $b$ in $(\ZZ/M\ZZ)^m$.
\end{proposition}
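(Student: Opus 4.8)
The plan is to reduce the statement to the elementary fact that an $\mathbb{F}$-linear map between finite-dimensional vector spaces over a finite field $\mathbb{F}$ has fibres of constant cardinality over every point of its image. First I would note that since $M$ is prime, $R \coloneqq \ZZ/M\ZZ$ is a field, so $R^n$ and $R^m$ are $R$-vector spaces. Writing each $b_j$ as a linear combination $b_j = \sum_{i=1}^n c_{ij}\, u_i$ with $c_{ij}\in R$, the assignment $u=(u_1,\ldots,u_n)\mapsto (b_1,\ldots,b_m)$ is an $R$-linear map $L\colon R^n\to R^m$ (explicitly $L(u) = uC$ for the matrix $C=(c_{ij})$). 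By definition, the range of $b$ is exactly the image $W \coloneqq L(R^n)$, which is an $R$-linear subspace of $R^m$.

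Next I would use that for any $w\in W$ the fibre $L^{-1}(w)$ is a coset of $\ker L$ in $R^n$, hence
\[
\abs{L^{-1}(w)} = \abs{\ker L} = M^{\,n-\dim_R W},
\]
a quantity that does not depend on $w$. Since $u$ is uniformly distributed on $R^n$, it follows that for every $w\in W$
\[
\Pr[b=w] \;=\; \Pr\bigl[u\in L^{-1}(w)\bigr] \;=\; \frac{\abs{L^{-1}(w)}}{M^n} \;=\; \frac{1}{\abs{W}},
\]
while $\Pr[b=w]=0$ for $w\notin W$. This is precisely the claim that $b$ is uniformly distributed on its range $W$. (If one wishes to allow the $b_j$ to be affine-linear combinations, the same argument applies verbatim with $L$ replaced by an affine map, and $W$ becomes an affine coset of a subspace.)

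I do not expect any genuine obstacle here; the statement is standard linear algebra over a finite field. The only point requiring care is the hypothesis that $M$ is prime, which is what makes $R$ a field and legitimises the coset/rank--nullity counting of $\abs{L^{-1}(w)}$; over a non-field such as $\ZZ/4\ZZ$ the conclusion genuinely fails (e.g.\ $b=2u$ has range $\{0,2\}$ but is twice as likely to be $0$ as to be $2$), but no such situation occurs in our application.
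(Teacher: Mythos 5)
Your proof is correct and follows essentially the same route as the paper's: write $b$ as the image of the uniform vector $u$ under a linear map over the field $\ZZ/M\ZZ$, observe that every fibre over the image is a coset of the kernel and hence has constant cardinality, and conclude uniformity on the range. The extra remarks on the prime hypothesis and the affine case are fine but not needed beyond what the paper already does.
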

\begin{proof}
Suppose $b_i = \sum_j c_{ij} u_j$. So $b = C u$ where $u = (u_1,\ldots, u_n)$ and $C$ is the matrix with entries $C_{ij} = c_{ij}$. For any $y$ in the image of $C$, the cardinality of the preimage $C^{-1}(y)$ is exactly the cardinality of the kernel of $C$. Indeed, if $Cx = y$, then $C^{-1}(y) = x + \ker(C)$. Since $u$ is uniform, we conclude that $b$ is uniform on the image of $C$.
\end{proof}

The method of types classifies sequences of symbols according to the relative proportion of occurrences of each symbol. We have used this method before in the proof of the generalized asymptotic sum inequality. In the proof of \cref{main} it will play a more important role. %

\begin{definition}
Let $N\in \NN$ and let $X$ be a finite ``alphabet'' set. The \defin{type} $P_x$ of a sequence $x=(x_1,x_2,\ldots, x_N)\in X^N$ is the relative proportion of occurrences of each symbol of $X$.  Type is sometimes called empirical distribution. The possible types for sequences $x\in X^N$ are called the \defin{$N$-types on $X$}.  If $P$ is an $N$-type on $X$, then the set of sequences $x\in X^N$ of type $P$ is called the \defin{type class} of $P$ and is denoted by $T_P^N$. 
\end{definition}
\begin{example}
For $X = \{0,1\}$ the type of the sequence $x=(1,0,0)$ is given by $P_x(0) = 2/3$ and $P_x(1) = 1/3$. The possible 3-types are given by $P(0) = 0$, $P(0) = 1/3$, $P(0) = 2/3$, $P(0) = 1$ (with $P(1) = 1-P(0)$). The type class of the 3-type given by $P(0) = 2/3$ is $T_P^3=\{(1,0,0), (0,1,0), (0,0,1)\}$.
\end{example}

\begin{proposition}\label{types}
Let $N\in \NN$ and let $X$ be a finite set.
The number of $N$-type classes on $X$ is the binomial coefficient $\binom{N+|X|-1}{|X|-1} = 2^{o(N)}$.
Let $P$ be an $N$-type on $X$. The number of sequences in the type class of $P = (p_1, p_2, \ldots)$ is the multinomial coefficient \[\binom{N}{Np_1,Np_2, \ldots},\] which is lower bounded by $2^{NH(P)-o(N)}$ and upper bounded by $2^{NH(P)}$.
\end{proposition}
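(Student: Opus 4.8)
Write $m=\abs{X}$ and identify $X$ with $\{1,\ldots,m\}$. The plan is to treat the two assertions separately — first counting the $N$-types, then bounding the size of a single type class — using only elementary combinatorics together with one entropy computation.

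First I would count the $N$-types. An $N$-type $P$ is exactly the data of its occurrence-count tuple $(Np_1,\ldots,Np_m)$, and as $P$ varies this tuple ranges over all tuples of nonnegative integers summing to $N$; by stars and bars there are $\binom{N+m-1}{m-1}$ of them, which is the claimed formula. Since $m$ is fixed this is a polynomial in $N$ of degree $m-1$, so $\log_2\binom{N+m-1}{m-1}=O(\log N)=o(N)$, i.e.\ $\binom{N+m-1}{m-1}=2^{o(N)}$.

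Next, for a fixed $N$-type $P$, a sequence $x\in X^N$ lies in $T_P^N$ precisely when symbol $i$ occurs $Np_i$ times for each $i$, and the number of ways to place the symbols is the multinomial coefficient $\binom{N}{Np_1,\ldots,Np_m}=N!/\prod_i(Np_i)!$; this gives the stated formula for $\abs{T_P^N}$. For the upper bound I would use the $N$-fold product distribution $P^{\otimes N}$ on $X^N$: each $x\in T_P^N$ has $P^{\otimes N}(x)=\prod_i p_i^{Np_i}=2^{N\sum_i p_i\log_2 p_i}=2^{-NH(P)}$ (indices with $p_i=0$ contribute nothing), hence $\abs{T_P^N}\,2^{-NH(P)}=P^{\otimes N}(T_P^N)\le 1$ and therefore $\abs{T_P^N}\le 2^{NH(P)}$.

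For the lower bound I would first prove that $P^{\otimes N}$ assigns at least as much mass to $T_P^N$ as to any other $N$-type class $T_Q^N$. For $Q$ with $\supp Q\subseteq\supp P$ (otherwise $P^{\otimes N}(T_Q^N)=0$ and there is nothing to show) the ratio $P^{\otimes N}(T_P^N)/P^{\otimes N}(T_Q^N)$ equals $\prod_i\frac{(Nq_i)!}{(Np_i)!}\,p_i^{N(p_i-q_i)}$; the elementary inequality $a!/b!\ge b^{a-b}$, valid for all nonnegative integers $a,b$ whether or not $a\ge b$, applied with $a=Nq_i$ and $b=Np_i$ bounds this below by $\prod_i N^{N(q_i-p_i)}=N^{N\sum_i(q_i-p_i)}=1$. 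Since the type classes partition $X^N$ and there are at most $\binom{N+m-1}{m-1}\le(N+1)^m$ of them, $1=\sum_Q P^{\otimes N}(T_Q^N)\le (N+1)^m\,P^{\otimes N}(T_P^N)=(N+1)^m\,\abs{T_P^N}\,2^{-NH(P)}$, whence $\abs{T_P^N}\ge 2^{NH(P)}/(N+1)^m=2^{NH(P)-o(N)}$. The only step with genuine content is this last one — the bound $a!/b!\ge b^{a-b}$ together with the ``$P^{\otimes N}$ favours its own type class'' lemma; everything else is bookkeeping, and indeed the proposition as a whole is a standard fact of the method of types (it could alternatively be cited, or derived directly from Stirling's formula, though the argument above has the advantage of not carrying Stirling error terms).
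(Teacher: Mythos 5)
Your proof is correct and follows exactly the route the paper relies on: stars and bars for counting the $N$-types, and the standard method-of-types argument (the product measure $P^{\otimes N}$ assigns mass $2^{-NH(P)}$ to each sequence of type $P$ and favours its own type class) for the bounds on $\abs{T_P^N}$ — the paper simply cites Cover--Thomas for this second part, whereas you write it out in full. Both the elementary inequality $a!/b!\ge b^{a-b}$ and the count $\binom{N+m-1}{m-1}\le(N+1)^m$ check out, so the argument is complete.
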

\begin{proof}
The first statement can be proved with the famous stars and bars argument: count the number of ways to arrange $N$ stars and $|X|-1$ bars in a row. For a proof of the second statement see \cite[Theorem 12.1.3]{cover2012elements}.
\end{proof}

\subsection{Restriction procedure}

We describe a procedure for finding a monomial restriction (see \cref{restrict} for the definition of monomial restriction) from any tensor $\psi$ to~$\tens_r$ for some~$r$. Despite its simplicity, this algorithm lies at the heart of the proof of \cref{main}. Typically, we will apply the algorithm to a tensor~$\psi$ of the form $\phi^{\otimes N}$ since we care about the monomial subexponent.

In a graph, a connected component is a maximal connected subgraph. Each vertex of a graph belongs to exactly one connected component.

\begin{lemma}\label{concomp}
Let $G$ be a graph with $n$ vertices and $m$ edges. Then $G$ has at least $n-m$ connected components.
\end{lemma}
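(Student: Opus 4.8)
The statement is a standard fact about graphs, and I would prove it by induction on the number of edges $m$, keeping the vertex set fixed. Write $c(G)$ for the number of connected components of $G$.

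\textbf{Base case.} If $m=0$, then $G$ has no edges, so every vertex forms its own connected component and $c(G)=n=n-m$.

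\textbf{Inductive step.} Suppose the claim holds for all graphs on $n$ vertices with $m-1$ edges, and let $G$ have $n$ vertices and $m\geq 1$ edges. Pick any edge $e=\{u,v\}$ of $G$ and let $G' = G - e$ be the graph obtained by deleting $e$ (but keeping all vertices). Then $G'$ has $n$ vertices and $m-1$ edges, so by the induction hypothesis $c(G')\geq n-(m-1)$. The only thing to check is that adding the single edge $e$ back decreases the number of components by at most one, i.e. $c(G)\geq c(G')-1$. This holds because the two endpoints $u$ and $v$ of $e$ lie in at most two components of $G'$: if they lie in the same component of $G'$, then $c(G)=c(G')$, and if they lie in two distinct components, then adding $e$ merges exactly those two into one, so $c(G)=c(G')-1$; in either case all other components are unchanged. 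Hence
\[
c(G)\;\geq\; c(G')-1 \;\geq\; n-(m-1)-1 \;=\; n-m,
\]
completing the induction.

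\textbf{Main obstacle.} There is essentially no serious obstacle; the only point requiring a word of justification is the inequality $c(G)\geq c(G')-1$, which follows from the observation that adding one edge can merge at most two components. (An alternative one-line argument avoiding induction: each connected component on $c_i$ vertices contains a spanning tree and hence at least $c_i-1$ edges, so $m\geq\sum_i(c_i-1)=n-c(G)$, giving $c(G)\geq n-m$; I would include the inductive version as the primary proof and could mention this as a remark.)
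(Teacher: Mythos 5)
Your proof is correct and is essentially the same argument as the paper's, which also starts from the edgeless graph with $n$ components and observes that each added edge destroys at most one component; you have merely written out the induction formally.
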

\begin{proof}
A graph without edges has $n$ connected components. For every edge that we add to the graph, we lose at most one connected component.
\end{proof}

\begin{proposition}\label{elimprop}
Let $\psi$ be a tensor in $V_1 \otimes \cdots \otimes V_k$ and let $B_1, \ldots, B_k$ be bases for $V_1, \ldots, V_k$. Let $\Psi$ be the corresponding support of $\psi$. Let $\badpairs$ be the set $\{\{a,a'\} \subseteq \Psi \mid a\neq a';\,  \exists i\, a_i = a'_i \}$. Then $\psi \combgeq \tens_{|\Psi|-|\badpairs|}$.
\end{proposition}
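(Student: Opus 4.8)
The plan is to realise $\tens_r$ with $r = |\Psi| - |\badpairs|$ as a monomial restriction of $\psi$ by singling out a sub-support on which $\psi$ is, after rescaling, a diagonal (unit) tensor. First I would form the \emph{collision graph} $H$ on vertex set $\Psi$ whose edge set is exactly $\badpairs$, so $a$ and $a'$ are adjacent iff $a \neq a'$ and $a_i = a'_i$ for some $i$. By the lemma just stated, $H$ has at least $|\Psi| - |\badpairs|$ connected components, so I would pick a set $S = \{a^{(1)}, \ldots, a^{(r)}\} \subseteq \Psi$ consisting of exactly one vertex from each of $r = |\Psi|-|\badpairs|$ distinct components (if $H$ has more components we simply discard the extra ones; alternatively use that $\tens_m \combgeq \tens_{m'}$ for $m \geq m'$ via a coordinate projection).

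The technical heart is the following claim: for every $a \in \Psi \setminus S$ there is a coordinate $i$ with $a_i \neq a^{(j)}_i$ for all $j \in \{1,\dots,r\}$. I would prove this by contradiction. If instead for each $i$ some $a^{(j_i)} \in S$ satisfied $a_i = a^{(j_i)}_i$, then — since $a \notin S$ forces $a \neq a^{(j_i)}$ — each pair $\{a, a^{(j_i)}\}$ lies in $\badpairs$, so $a$ is adjacent in $H$ to $a^{(j_i)}$; hence $a^{(j_1)}, \ldots, a^{(j_k)}$ all lie in the connected component of $a$. As $S$ meets every component at most once, these elements coincide, and that single element agrees with $a$ in every coordinate, so $a \in S$, a contradiction. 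The same observation (distinct components contain no $\badpairs$-edge between their representatives) shows that for each fixed $i$ the values $a^{(1)}_i, \ldots, a^{(r)}_i$ are pairwise distinct.

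With this in hand I would define, for each $i$, a linear map $A_i : V_i \to \CC^r$ by $A_i(a^{(j)}_i) = c_{i,j}\, b_j$ (where $b_1, \ldots, b_r$ is the standard basis of $\CC^r$) and $A_i(b) = 0$ for every basis vector $b \in B_i$ not of the form $a^{(j)}_i$. Distinctness of the $a^{(j)}_i$ makes each $A_i$ have at most one nonzero entry in every row and every column, i.e.\ $A_i$ is monomial, and I would choose the scalars $c_{i,j} \in \CC^{\times}$ so that $\psi(a^{(j)})\prod_{i=1}^k c_{i,j} = 1$ for every $j$ (possible since $\psi(a^{(j)}) \neq 0$ because $a^{(j)}\in\Psi$; e.g.\ $c_{1,j} = \psi(a^{(j)})^{-1}$ and $c_{i,j}=1$ for $i\geq 2$). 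Expanding $(A_1 \otimes \cdots \otimes A_k)\psi = \sum_{a\in\Psi}\psi(a)\, A_1(a_1)\otimes\cdots\otimes A_k(a_k)$, every $a\in\Psi\setminus S$ contributes $0$ by the claim (some factor $A_i(a_i)$ is killed), while each $a^{(j)}\in S$ contributes $\psi(a^{(j)})\bigl(\prod_i c_{i,j}\bigr) b_j\otimes\cdots\otimes b_j = b_j\otimes\cdots\otimes b_j$; hence the image is $\sum_{j=1}^r b_j\otimes\cdots\otimes b_j = \tens_r$, so $\psi \combgeq \tens_{|\Psi|-|\badpairs|}$.

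I expect the main obstacle to be precisely the claim in the second paragraph. A merely maximal independent set of $H$ of the right size would already make the surviving diagonal terms of $(A_1\otimes\cdots\otimes A_k)\psi$ correct, but it could let spurious off-diagonal terms survive the restriction; it is the stronger choice of $S$ as a transversal of the connected components of $H$ that rules these out, and getting that argument right is the crux of the proof.
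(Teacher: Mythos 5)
Your proof is correct and follows essentially the same route as the paper: build the collision graph on $\Psi$ with edge set $\badpairs$, pick one vertex per connected component, and read off a monomial restriction to $\tens_{|\Psi|-|\badpairs|}$. You additionally spell out the step the paper leaves implicit — that choosing a transversal of the \emph{components} (not merely an independent set) is what kills every surviving off-diagonal term — and that argument is exactly right.
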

\begin{proof}
Let $G$ be the graph with vertex set $\Psi$ and edge set $\badpairs$. Let $\mathcal{E}(\Psi)\subseteq \Psi$ be a subset of the vertex set that contains exactly one vertex per connected component of $G$. Then $\abs[0]{\mathcal{E}(\Psi)} \geq |\Psi|-|\badpairs|$ by \cref{concomp}. Moreover, because the vertices in $\mathcal{E}(\Psi)$ are non-adjacent, there is a monomial restriction $\psi \combgeq \tens_{|\Psi|-|\badpairs|}$.
\end{proof}

We illustrate the procedure in \cref{elimprop} with a concrete example where $\psi$ is of the form $\phi^{\otimes 2}$.

\begin{example}\label{algill}
Let $D_k = D_{(1^k)}$ be the $k$-tensor $\sum_{\sigma \in S_k} \sigma \cdot b_1\otimes b_2\otimes \cdots\otimes b_k$ living in $(\CC^k)^{\otimes k}$, where the symmetric group $S_k$ acts by permuting tensor legs. In the standard basis, and identifying $b_i$ with $i$, the tensor $D_{k}$ has support
\[
\Phi = \{(\sigma(1),\sigma(2),\ldots, \sigma(k)) \mid \sigma \in S_k\} \subseteq [k]^k.
\]
Consider the $N$th power $D_k^{\otimes N} \in (\CC^{k^N})^{\otimes k}$. Let $\Phi^{\otimes N} \subseteq ([k]^N)^k$ be its support in the tensor product basis. We write every element in $\Phi^{\otimes N}$ as a $k$-tuple $(I_1, \ldots, I_k)$ of column $N$-vectors~$I_i$, so that for $k=3$ and $N=2$ we get
\newcommand{\lowcomma}{\raisebox{-0.3em}{\hspace{-0.05em},\hspace{0.1em}}}
\newcommand{\elowcomma}{\!\lowcomma}
\begin{align*}
\Phi^{\otimes 2} = \bigl\{&\bigl(\begin{bsmallmatrix}0\\ 0\end{bsmallmatrix}\lowcomma \begin{bsmallmatrix}1\\ 1\end{bsmallmatrix}\lowcomma \begin{bsmallmatrix}2\\ 2\end{bsmallmatrix}\bigl)\elowcomma\,
 \bigl(\begin{bsmallmatrix}0\\ 0\end{bsmallmatrix}\lowcomma \begin{bsmallmatrix}1\\ 2\end{bsmallmatrix}\lowcomma \begin{bsmallmatrix}2\\ 1\end{bsmallmatrix}\bigl)\elowcomma\,
 \bigl(\begin{bsmallmatrix}0\\ 1\end{bsmallmatrix}\lowcomma \begin{bsmallmatrix}1\\ 0\end{bsmallmatrix}\lowcomma \begin{bsmallmatrix}2\\ 2\end{bsmallmatrix}\bigl)\elowcomma\,
 \bigl(\begin{bsmallmatrix}0\\ 1\end{bsmallmatrix}\lowcomma \begin{bsmallmatrix}1\\ 2\end{bsmallmatrix}\lowcomma \begin{bsmallmatrix}2\\ 0\end{bsmallmatrix}\bigl)\elowcomma\,\\
 &\bigl(\begin{bsmallmatrix}0\\ 2\end{bsmallmatrix}\lowcomma \begin{bsmallmatrix}1\\ 0\end{bsmallmatrix}\lowcomma \begin{bsmallmatrix}2\\ 1\end{bsmallmatrix}\bigl)\elowcomma\,
 \bigl(\begin{bsmallmatrix}0\\ 2\end{bsmallmatrix}\lowcomma \begin{bsmallmatrix}1\\ 1\end{bsmallmatrix}\lowcomma \begin{bsmallmatrix}2\\ 0\end{bsmallmatrix}\bigl)\elowcomma\,
 \bigl(\begin{bsmallmatrix}0\\ 0\end{bsmallmatrix}\lowcomma \begin{bsmallmatrix}2\\ 1\end{bsmallmatrix}\lowcomma \begin{bsmallmatrix}1\\ 2\end{bsmallmatrix}\bigl)\elowcomma\,
 \bigl(\begin{bsmallmatrix}0\\ 0\end{bsmallmatrix}\lowcomma \begin{bsmallmatrix}2\\ 2\end{bsmallmatrix}\lowcomma \begin{bsmallmatrix}1\\ 1\end{bsmallmatrix}\bigl)\elowcomma\,\\
 &\bigl(\begin{bsmallmatrix}0\\ 1\end{bsmallmatrix}\lowcomma \begin{bsmallmatrix}2\\ 0\end{bsmallmatrix}\lowcomma \begin{bsmallmatrix}1\\ 2\end{bsmallmatrix}\bigl)\elowcomma\,
 \bigl(\begin{bsmallmatrix}0\\ 1\end{bsmallmatrix}\lowcomma \begin{bsmallmatrix}2\\ 2\end{bsmallmatrix}\lowcomma \begin{bsmallmatrix}1\\ 0\end{bsmallmatrix}\bigl)\elowcomma\,
 \bigl(\begin{bsmallmatrix}0\\ 2\end{bsmallmatrix}\lowcomma \begin{bsmallmatrix}2\\ 0\end{bsmallmatrix}\lowcomma \begin{bsmallmatrix}1\\ 1\end{bsmallmatrix}\bigl)\elowcomma\,
 \bigl(\begin{bsmallmatrix}0\\ 2\end{bsmallmatrix}\lowcomma \begin{bsmallmatrix}2\\ 1\end{bsmallmatrix}\lowcomma \begin{bsmallmatrix}1\\ 0\end{bsmallmatrix}\bigl)\elowcomma\,\\
 &\bigl(\begin{bsmallmatrix}1\\ 0\end{bsmallmatrix}\lowcomma \begin{bsmallmatrix}0\\ 1\end{bsmallmatrix}\lowcomma \begin{bsmallmatrix}2\\ 2\end{bsmallmatrix}\bigl)\elowcomma\,
 \bigl(\begin{bsmallmatrix}1\\ 0\end{bsmallmatrix}\lowcomma \begin{bsmallmatrix}0\\ 2\end{bsmallmatrix}\lowcomma \begin{bsmallmatrix}2\\ 1\end{bsmallmatrix}\bigl)\elowcomma\,
 \bigl(\begin{bsmallmatrix}1\\ 1\end{bsmallmatrix}\lowcomma \begin{bsmallmatrix}0\\ 0\end{bsmallmatrix}\lowcomma \begin{bsmallmatrix}2\\ 2\end{bsmallmatrix}\bigl)\elowcomma\,
 \bigl(\begin{bsmallmatrix}1\\ 1\end{bsmallmatrix}\lowcomma \begin{bsmallmatrix}0\\ 2\end{bsmallmatrix}\lowcomma \begin{bsmallmatrix}2\\ 0\end{bsmallmatrix}\bigl)\elowcomma\,\\
 &\bigl(\begin{bsmallmatrix}1\\ 2\end{bsmallmatrix}\lowcomma \begin{bsmallmatrix}0\\ 0\end{bsmallmatrix}\lowcomma \begin{bsmallmatrix}2\\ 1\end{bsmallmatrix}\bigl)\elowcomma\,
 \bigl(\begin{bsmallmatrix}1\\ 2\end{bsmallmatrix}\lowcomma \begin{bsmallmatrix}0\\ 1\end{bsmallmatrix}\lowcomma \begin{bsmallmatrix}2\\ 0\end{bsmallmatrix}\bigl)\elowcomma\,
 \bigl(\begin{bsmallmatrix}1\\ 0\end{bsmallmatrix}\lowcomma \begin{bsmallmatrix}2\\ 1\end{bsmallmatrix}\lowcomma \begin{bsmallmatrix}0\\ 2\end{bsmallmatrix}\bigl)\elowcomma\,
 \bigl(\begin{bsmallmatrix}1\\ 0\end{bsmallmatrix}\lowcomma \begin{bsmallmatrix}2\\ 2\end{bsmallmatrix}\lowcomma \begin{bsmallmatrix}0\\ 1\end{bsmallmatrix}\bigl)\elowcomma\,\\
 &\bigl(\begin{bsmallmatrix}1\\ 1\end{bsmallmatrix}\lowcomma \begin{bsmallmatrix}2\\ 0\end{bsmallmatrix}\lowcomma \begin{bsmallmatrix}0\\ 2\end{bsmallmatrix}\bigl)\elowcomma\,
 \bigl(\begin{bsmallmatrix}1\\ 1\end{bsmallmatrix}\lowcomma \begin{bsmallmatrix}2\\ 2\end{bsmallmatrix}\lowcomma \begin{bsmallmatrix}0\\ 0\end{bsmallmatrix}\bigl)\elowcomma\,
 \bigl(\begin{bsmallmatrix}1\\ 2\end{bsmallmatrix}\lowcomma \begin{bsmallmatrix}2\\ 0\end{bsmallmatrix}\lowcomma \begin{bsmallmatrix}0\\ 1\end{bsmallmatrix}\bigl)\elowcomma\,
 \bigl(\begin{bsmallmatrix}1\\ 2\end{bsmallmatrix}\lowcomma \begin{bsmallmatrix}2\\ 1\end{bsmallmatrix}\lowcomma \begin{bsmallmatrix}0\\ 0\end{bsmallmatrix}\bigl)\elowcomma\,\\
 &\bigl(\begin{bsmallmatrix}2\\ 0\end{bsmallmatrix}\lowcomma \begin{bsmallmatrix}0\\ 1\end{bsmallmatrix}\lowcomma \begin{bsmallmatrix}1\\ 2\end{bsmallmatrix}\bigl)\elowcomma\,
 \bigl(\begin{bsmallmatrix}2\\ 0\end{bsmallmatrix}\lowcomma \begin{bsmallmatrix}0\\ 2\end{bsmallmatrix}\lowcomma \begin{bsmallmatrix}1\\ 1\end{bsmallmatrix}\bigl)\elowcomma\,
 \bigl(\begin{bsmallmatrix}2\\ 1\end{bsmallmatrix}\lowcomma \begin{bsmallmatrix}0\\ 0\end{bsmallmatrix}\lowcomma \begin{bsmallmatrix}1\\ 2\end{bsmallmatrix}\bigl)\elowcomma\,
 \bigl(\begin{bsmallmatrix}2\\ 1\end{bsmallmatrix}\lowcomma \begin{bsmallmatrix}0\\ 2\end{bsmallmatrix}\lowcomma \begin{bsmallmatrix}1\\ 0\end{bsmallmatrix}\bigl)\elowcomma\,\\
 &\bigl(\begin{bsmallmatrix}2\\ 2\end{bsmallmatrix}\lowcomma \begin{bsmallmatrix}0\\ 0\end{bsmallmatrix}\lowcomma \begin{bsmallmatrix}1\\ 1\end{bsmallmatrix}\bigl)\elowcomma\,
 \bigl(\begin{bsmallmatrix}2\\ 2\end{bsmallmatrix}\lowcomma \begin{bsmallmatrix}0\\ 1\end{bsmallmatrix}\lowcomma \begin{bsmallmatrix}1\\ 0\end{bsmallmatrix}\bigl)\elowcomma\,
 \bigl(\begin{bsmallmatrix}2\\ 0\end{bsmallmatrix}\lowcomma \begin{bsmallmatrix}1\\ 1\end{bsmallmatrix}\lowcomma \begin{bsmallmatrix}0\\ 2\end{bsmallmatrix}\bigl)\elowcomma\,
 \bigl(\begin{bsmallmatrix}2\\ 0\end{bsmallmatrix}\lowcomma \begin{bsmallmatrix}1\\ 2\end{bsmallmatrix}\lowcomma \begin{bsmallmatrix}0\\ 1\end{bsmallmatrix}\bigl)\elowcomma\,\\
 &\bigl(\begin{bsmallmatrix}2\\ 1\end{bsmallmatrix}\lowcomma \begin{bsmallmatrix}1\\ 0\end{bsmallmatrix}\lowcomma \begin{bsmallmatrix}0\\ 2\end{bsmallmatrix}\bigl)\elowcomma\,
 \bigl(\begin{bsmallmatrix}2\\ 1\end{bsmallmatrix}\lowcomma \begin{bsmallmatrix}1\\ 2\end{bsmallmatrix}\lowcomma \begin{bsmallmatrix}0\\ 0\end{bsmallmatrix}\bigl)\elowcomma\,
 \bigl(\begin{bsmallmatrix}2\\ 2\end{bsmallmatrix}\lowcomma \begin{bsmallmatrix}1\\ 0\end{bsmallmatrix}\lowcomma \begin{bsmallmatrix}0\\ 1\end{bsmallmatrix}\bigl)\elowcomma\,
 \bigl(\begin{bsmallmatrix}2\\ 2\end{bsmallmatrix}\lowcomma \begin{bsmallmatrix}1\\ 1\end{bsmallmatrix}\lowcomma \begin{bsmallmatrix}0\\ 0\end{bsmallmatrix}\bigl)\bigr\}.
\end{align*}
Order~$\Phi^{\otimes 2}$ as it is written above, and do the  procedure as described in \cref{elimprop}. %
This yields for example a tensor with support
\[
\bigl\{\bigl(\begin{bsmallmatrix}0\\ 0\end{bsmallmatrix}\lowcomma \begin{bsmallmatrix}1\\ 1\end{bsmallmatrix}\lowcomma \begin{bsmallmatrix}2\\ 2\end{bsmallmatrix}\bigr)\elowcomma\,
 \bigl(\begin{bsmallmatrix}1\\ 2\end{bsmallmatrix}\lowcomma \begin{bsmallmatrix}0\\ 0\end{bsmallmatrix}\lowcomma \begin{bsmallmatrix}2\\ 1\end{bsmallmatrix}\bigr)\bigr\}.%
\]
This tensor is isomorphic to $\tens_2$. We thus have $D_{(1,1,1)}^{\otimes 2} \combgeq \tens_2$. Therefore, $\combsubomega(D_{(1,1,1)}) \combgeq \log_2(2)/2 = 0.5$. 
In fact, it is not hard to see that there is a restriction $D_{(1,1,1)} \combgeq \tens_2$, so $\combsubomega(D_{(1,1,1)}) \geq \log_2(2) = 1$.
The final answer is given by Strassen's \cref{strassentripartite}: $\combsubomega(D_{(1,1,1)}) = \subomega(D_{(1,1,1)}) = H(\tfrac13,\tfrac13,\tfrac13) =\log_2(3)$ which is approximately $1.58496$. %
\end{example}

\subsection{Proof of \cref{main}}
We are now ready for the proof of \cref{main}.
In the rest of this section we will use the following notation.

\begin{notation}\label{stdnotation}
Let $\phi$ be a $k$-tensor, $B$ a basis, $\Phi$ the corresponding support. Suppose $\phi$ is tight with respect to $\alpha$.
\begin{itemize}
\item $\optset{P}_{\Phi}$ consists of all probability distributions $P$ on $\Phi$;  and $\marg{P}=(P_1, \ldots, P_k)$ are the marginal distributions of $P$ on the $k$ components respectively;
\item $\optset{R}_{\Phi}$ consists of all subsets $R \subseteq \Phi \times \Phi$ that are not contained in the diagonal set $\Delta_{\Phi}\coloneqq \{(x,x) \mid x\in \Phi\}$, and such that $R\subseteq \mathcal{R}_i$ for some $i\in \{1,\ldots,k\}$, where $\mathcal{R}_i \coloneqq \{(x,y) \in \Phi\times \Phi \mid x_i = y_i\}$.
\item $\optset{Q}_{\Phi, \marg{P},R}$ consists of all probability distributions $Q$ on $R$ whose marginals on the $2k$ components of~$R$ satisfy
$Q_i = Q_{k+i} = P_i$ for $i \in \{1,\ldots, k\}$. %
\end{itemize}
Recall that for $R\subseteq \Phi\times \Phi$, we defined $r(R) = r_\alpha(R)$ to be the rank of the matrix with rows $\{\alpha(x) - \alpha(y) \mid (x,y)\in R\}$ over $\QQ$.
\end{notation}

\begin{customtheorem}{\ref{main}}[Repeated]
The monomial subexponent of $\phi$ is at least,
\[
\combsubomega(\phi) \geq \max_{P\in\optset{P}_{\Phi}}\! \biggl(\!H(P) - (k-2) \max_{\mathclap{R\in \optset{R}_{\Phi}}} \frac{\max_{Q\in \optset{Q}_{\Phi,\marg{P},R}} H(Q) - H(P)}{r_\alpha(R)} \biggr).
\]
\end{customtheorem}

\begin{proof}[\bf\upshape Proof of \cref{main}]
Identify $\Phi$ with $\alpha(\Phi) \subseteq \ZZ^k$.
Let $P = (p_1,p_2,\ldots)$ be a probability distribution on~$\Phi$ with rational probabilities $p_i$. Fix a small number $\varepsilon > 0$ and let $N$ be an integer such that every $Np_i$ is an integer (so $P$ is an $N$-type). Let $\marg{P}=(P_1, \ldots, P_k)$ be the marginal distributions of $P$ on the $k$ components of $\Phi\subseteq \ZZ^k$. 
Let $\Phi^{\otimes N}$ be the support of $\phi^{\otimes N}$ in the tensor product basis. %
 Each element in $\Phi$ corresponds to $k$ integers, so each element in $\Phi^{\otimes N}$ corresponds to $k$ vectors $I_1, I_2, \ldots, I_k \in \ZZ^N$. (See \cref{algill}.)

Let $\psi$ be the restriction of $\phi^{\otimes N}$ obtained by keeping only those elements $(I_1, \ldots, I_k)$ in the support for which the type of $I_i$ is $P_i$ for each $i\in [k]$. These include the elements of type~$P$. By \cref{types}, the number of elements remaining can be bounded as
\begin{equation*}\label{Phiineq}
|\Phi^{\otimes N} \cap (T^N_{P_1} \times \cdots \times T^N_{P_k})| \geq |T_P^N| = \binom{N}{Np_1,Np_2,\ldots} \geq  2^{N H(P) - o(N)},
\end{equation*}
where $T_P^N$ denotes the type class of length $N$ strings of $k$-tuples of integers with type $P$, and $T_{P_i}^N$ denotes the type class of length $N$ strings of integers with type $P_i$.  After the restrictions, the new support is 
\begin{equation*}\label{Psient}
\Psi \coloneqq \Phi^{\otimes N} \cap(T_{P_1}^N \times \cdots \times T_{P_k}^N), \quad |\Psi| \geq 2^{NH(P)-o(N)}.
\end{equation*}

Let $M$ be a prime between $\floor{2^{\mu N}}$ and $2\floor{2^{\mu N}}$ for some $\mu>0$ chosen later (this $M$ exists for any $\mu$ and $N$ by Bertrand's Postulate \cite{aigner2010proofs}). Let $B\subseteq\NN$ be a $(k-1)$-average-free set with
\begin{equation}\label{eqB}
\max(B) < \frac{M}{k-1} \quad\textnormal{and}\quad |B|\geq M^{1-\varepsilon}
\end{equation}
with~$\varepsilon$ as chosen above (such a $B$ exists when $N$ is large enough by \cref{avgfree}). Let $v_1,\ldots, v_N, u_1, \ldots, u_{k-1}$ be independent uniformly random variables in $\ZZ/M\ZZ$, and compute a hash as follows:
\begin{align*}
b_i(I_i) &\coloneqq u_i + \sum_{j=1}^{\smash{N}} (I_i)_j v_j \qquad \textnormal{for $1\leq i\leq k-1$},\\
b_k(I_k) &\coloneqq \frac{1}{k-1}\Bigl( u_1 + \cdots + u_{k-1} - \sum_{j=1}^N (I_k)_j v_j\Bigr),
\end{align*}
with operations understood in $\ZZ/M\ZZ$ (in particular $k-1$ is invertible if $N$ and hence $M$ is large enough).
By construction of $b_1, \ldots, b_k$ and since $\phi$ is tight, 
\begin{equation}\label{eq1}
b_1(I_1) + b_2(I_2) + \cdots + b_{k-1}(I_{k-1}) = (k-1)b_k(I_k)
\end{equation}
holds for every element $(I_1, \ldots, I_k)$ in the support $\Psi$. Using restrictions again, we keep only those elements such that $b_i(I_i) \in B$ modulo $M$ for every $i\in [k]$. Call the remaining tensor $\psi'$ and its support $\Psi'$. Since $\max(B) < \frac{M}{k-1}$, the equality in \eqref{eq1} then holds in $\ZZ$ for the representatives $\{0,1,\ldots,M-1\}$ of $\ZZ/M\ZZ$. By the $(k-1)$-average-free property of $B$ this implies $b_1(I_1) = b_2(I_2) = \cdots = b_k(I_k)$. Summarizing, we restricted $\psi$ to $\psi'$ with a monomial restriction in such a way that for every $(I_1, \ldots, I_k) \in \Psi'=\supp \psi'$ we have $b_1(I_1) = b_2(I_2) = \cdots = b_k(I_k)$.

We apply \cref{elimprop} to $\Psi'$.
This yields a monomial restriction
\begin{equation}\label{combrestr}
\psi' \combgeq \tens_{X-Y}
\end{equation}
where $X = |\Psi'|$ and $Y = |\mathcal{C}'|$ for $\mathcal{C'}\coloneqq\{(I,I') \in \Psi'^2 \mid I\neq I';\, \exists i\, I_i = I'_i\}$. (The elements of $\mathcal{C'}$ are ordered pairs. Equation \eqref{combrestr} also holds if we take unordered pairs, but ordered pairs will be more convenient later.)
We will now find a lower bound on $\E[X-Y]$ (which will also be a lower bound on the largest possible value of $X-Y$) where the expectation is taken over the independent uniform choice of the $u_i$'s and $v_i$'s in $\ZZ/M\ZZ$. We claim to have the following bounds on $\E[X]$ and $\E[Y]$:
\begin{itemize}
\item $\E[X] = |B| |\Psi|  M^{-(k-1)}$ \hfill (Claim 1)
\item $\displaystyle\E[Y] \leq |B|  \max_{R\in \optset{R}_\Phi} \max_{Q\in \optset{Q}_{\Phi,\marg{P},R}} 2^{N H(Q) + o(N)} M^{-(k-1 + r(R))}$ \hfill (Claim 2)
\end{itemize}
where in the second bound $\optset{R}_\Phi$ and $\optset{Q}_{\Phi,\marg{P},R}$ are as in the statement of the theorem. Intuitively, we want to choose $\mu$ small such that $\E[X]$ has a large exponent, but we want to choose $\mu$ large such that $\E[Y]$ has a smaller exponent than the exponent of $\E[X]$.
To make this precise, we put the claimed bounds together with
\begin{itemize}
\item $|B|\geq M^{1-\varepsilon}$ \hfill %
\item $|\Psi| \geq 2^{NH(P) - o(N)}$ \hfill %
\item $M \geq 2^{\mu N - o(N)}$
\end{itemize}
and use linearity of expectation, to obtain
\begin{align*}
\E[X-Y] &= \E[X] - \E[Y]\\
&\geq |B| \Bigl( |\Psi| M^{-(k-1)} - \max_{R\in \optset{R}_\Phi} \max_{Q\in \optset{Q}_{\Phi,\marg{P},R}} 2^{N H(Q) + o(N)} M^{-(k-1+r(R))}  \Bigr)\\
&\geq M^{1-\varepsilon} 2^{N H(P) - o(N)} M^{-(k-1)} \\
&\qquad\qquad \cdot \Bigl( 1 - \max_{R\in \optset{R}_\Phi} \max_{Q\in \optset{Q}_{\Phi,\marg{P},R}} 2^{N(H(Q) - H(P)) + o(N)} M^{-r(R)} \Bigr)\\
&\geq 2^{N(H(P) - (k-2+\varepsilon)\mu) - o(N)}\\
&\qquad\qquad \cdot \Bigl( 1 - \max_{R\in \optset{R}_\Phi} \max_{Q\in \optset{Q}_{\Phi,\marg{P},R}} 2^{N(H(Q) - H(P) - r(R)\mu) + o(N)} \Bigr).
\end{align*}
We want to choose $\mu$ small such that the first factor has a large exponent, but we want to choose $\mu$ large such that the second factor is bounded away from 0. We thus set
\[
\mu = \max_{R\in \optset{R}_\Phi} \frac{\max_{Q\in \optset{Q}_{\Phi,\marg{P},R}} H(Q) - H(P) + o(1)}{r(R)}.
\]
After substituting $\mu$, we get that
\begin{align*}
\combsubomega(\phi) &= \frac{1}{\combomega(\phi, \tens)} = \frac{1}{N \combomega(\phi^{\otimes N}, \tens)} \\
&\geq \frac{1}{N} \log_2 \E[X-Y]\\
&\geq H(P) - (k-2+\varepsilon)\max_{R\in \optset{R}_\Phi} \frac{\max_{Q\in \optset{Q}_{\Phi,\marg{P},R}} H(Q) - H(P)  + o(1)}{r(R)}  - o(1).
\end{align*}
We note again that here we used the probabilistic method: the largest value of $X-Y$ is at least $\E[X-Y]$.
Finally, we let $N \to \infty$ and $\varepsilon \to 0$, and take the supremum over $P$ with rational entries. By continuity, the latter can be replaced by a maximum over arbitrary real-valued probability distributions $P$ on $\Phi$, finishing the main argument of the proof.
It remains to prove the two claims. %

\textbf{Claim 1.} We will prove the claim
\[
\E[X] = |\Psi| |B| M^{-(k-1)}.
\]
Recall that in the main argument we defined $\Psi = \Phi^{\otimes N} \cap (T^N_{P_1} \times \cdots \times T^N_{P_k})$ and $\Psi' = \{(I_1, \ldots, I_k) \in \Psi \mid b_i(I_i) \in B \textnormal{ for all $i\in [k]$}\}$. Recall that for every $(I_1, \ldots, I_k) \in \Psi'$ we have $b_1(I_1) = b_2(I_2) = \cdots = b_k(I_k)$. The random variable~$X$ is the size of $\Psi'$ and can be computed as follows:
\begin{align*}
\E[X] &= \sum_{I\in \Psi} \Pr[ b_1(I_1)\in B \wedge \cdots \wedge b_k(I_k) \in B ] \cdot 1\\
&= \sum_{I\in \Psi}\, \sum_{b\in B}  \Pr[ b_1(I_1) = b_2(I_2) = \cdots = b_k(I_k) = b ]\\
&=  \sum_{I\in \Psi}\, \sum_{b\in B} \Pr[ b_1(I_1) = b_2(I_2) = \cdots = b_{k-1}(I_{k-1}) = b ]\\
&=  \sum_{I\in \Psi} \sum_{b\in B} M^{-(k-1)}\\
&=  |\Psi| |B| M^{-(k-1)},
\end{align*}
using that the random variables $b_1(I_1)$, $b_2(I_2)$, \ldots, $b_{k-1}(I_{k-1})$ are independent uniform in $\ZZ/M\ZZ$ because of the presence of $u_i$ in the definition of $b_i$.

\textbf{Claim 2.} We will prove the claim
\[
\E[Y] \leq |B|  \max_{R\in \optset{R}_\Phi} \max_{Q\in \optset{Q}_{\Phi,\marg{P},R}} 2^{N H(Q) + o(N)} M^{-(k-1 + r(R))}.
\]
The number $Y$ is the size of $\mathcal{C}'=\{(I,I') \in \Psi'^2 \mid I\neq I';\, \exists i\, I_i = I'_i\}$, whose expectation can be written in terms of $\mathcal{C}\coloneqq\{(I,I') \in \Psi^2 \mid I\neq I';\, \exists i\, I_i = I'_i\}$ as %
\begin{align}%
\E[Y] &= \sum_{(I,I')\in \badpairs} \Pr[\forall i\,\, b_i(I_i)\in B \wedge b_i(I'_i) \in B]\nonumber\\
&=  \sum_{\substack{(I,I')\in\badpairs}}\sum_{b\in B} \Pr[b_1(I_1) = \cdots = b_k(I_k)\nonumber\\[-1em] &\hspace{7em}= b_1(I'_1) = \cdots = b_k(I'_k) = b].\label{eqY1}
\end{align}

Fix a pair $((I_1,\ldots,I_k)$, $(I'_1, \ldots, I'_k))\in \badpairs$. %
The random variables $b_i(I_i)$ and $b_i(I'_i)$ are linear combinations of independent uniform random variables, and therefore $(b_1(I_1), \ldots, b_k(I_k), b_1(I'_1), \ldots, b_k(I'_k))$ is uniform on the image subspace in $(\ZZ/M\ZZ)^{2k}$ (see \cref{uniflem}).
This subspace contains $(b, b, \ldots, b)$ for any $b\in \ZZ/M\ZZ$, since $u_1=u_2=\cdots=u_k = b$, $v_1 = \cdots = v_N = 0$ is a possible assignment. The probability of the event $(b,b,\ldots, b)$ is thus equal to the reciprocal of the cardinality of the image subspace. This cardinality equals $M$ to the power the rank of the $(N+k-1)\times 2k$ coefficient matrix
\begin{equation}\label{mat}
\begin{pmatrix}
1 & 0 & \cdots & 0 & \phantom{-}\tfrac{1}{k-1} & 1 & 0 & \cdots & 0 & \phantom{-}\frac{1}{k-1}\\
0 & 1 &  & 0 & \phantom{-}\tfrac{1}{k-1} & 0 & 1 &  & 0 & \phantom{-}\frac{1}{k-1}\\
\vdots &  & \ddots &  & \phantom{-}\vdots  &  & &\ddots  &  & \phantom{-}\vdots\\
0 & 0 & \cdots & 1 & \phantom{-}\tfrac{1}{k-1} & 0 & 0 & \cdots & 1 & \phantom{-}\frac{1}{k-1}\\
I_1 & I_2 & \cdots & I_{k-1} & -\tfrac{I_k}{k-1} & I'_1 & I'_2 & \cdots & I'_{k-1} & -\frac{I'_k}{k-1}
\end{pmatrix}
\end{equation}
over $\ZZ/M\ZZ$, with $I_1, \ldots, I_k, I'_1, \ldots, I'_k$ thought of as column vectors. With column and row operations this matrix can be transformed into
\[
\begin{pmatrix}
0 & 0 & \cdots & 0 & 0 & 1 & 0 & \cdots & 0 & 0\\
0 & 0 & \cdots & 0 & 0 & 0 & 1 &        & 0 & 0\\
\vdots &  \vdots & \ddots & \vdots & \vdots & \vdots  &  & \ddots &  & \vdots\\
0 & 0 & \cdots & 0 & 0 & 0 & 0 &        & 1 & 0\\
I_1 - I'_1 & I_2 - I'_2 & \cdots & I_{k-1} - I'_{k-1} & I_k - I'_k & 0 & 0 & \cdots & 0 & 0
\end{pmatrix}
\]
Here we used that $\sum_{i=1}^k I_i'$ is the zero vector.
It follows that the rank of the matrix in \eqref{mat} is $k-1$ plus the rank of the $N\times k$ matrix
\begin{equation*}\label{eq:mat}
A(I,I')\coloneqq\begin{pmatrix}
I_1 - I'_1 & I_2 - I'_2 & \cdots & I_k - I'_k
\end{pmatrix}.
\end{equation*}
Letting $r_M(I,I')$ denote the rank of $A(I,I')$, equation \eqref{eqY1} thus becomes
\begin{align}\label{eqpr2}
\E[Y] &= \sum_{(I,I')\in \mathcal{C}} \sum_{b\in B} M^{-(k-1+r_M(I,I'))}\nonumber\\
&= |B| \sum_{(I,I')\in \mathcal{C}} M^{-(k-1+r_M(I,I'))}.
\end{align}

Again fix a pair $((I_1,\ldots,I_k)$, $(I'_1, \ldots, I'_k))\in \badpairs$. Recall that $\badpairs \subseteq \Psi \times \Psi \subseteq \Phi^{\otimes N} \times \Phi^{\otimes N}$. Therefore, each row in $A(I,I')$ is of the form $x-y$ for some $(x,y) \in \Phi\times \Phi$. Namely, the $i$th row of $A(I,I')$ equals $x-y$ for $x = ((I_1)_i,\ldots, (I_k)_i)$ and $y = ((I_1')_i,\ldots, (I_k')_i)$.
We map every $(I,I')\in \badpairs$ to the sequence of pairs corresponding to the rows in $A(I,I')$,
\begin{align*}
\mathcal{C} &\to (\Phi\times \Phi)^N\\
(I,I') &\mapsto \bigl(((I_1)_i,\ldots, (I_k)_i),((I_1')_i,\ldots, (I_k')_i)\bigr)_{i\in[N]}.
\end{align*}
The image of this map consists of sequences $S\in (\Phi\times\Phi)^{N}$ such that
\begin{itemize}
\item there is an $R \in \optset{R}_\Phi$
\item there is an $N$-type $Q \in \optset{Q}_{\Phi, \marg{P}, R}$ with $\supp(Q) = R$
\item $S\in T^N_Q$.
\end{itemize}
This map is injective.
In words, $S$ is the sequence of pairs $(x,y)$ corresponding to the rows in the matrix $A(I,I')$, and $R$ is the (unique) set of pairs occurring in $S$, and $Q$ is the (unique) empirical distribution of the elements of $R$ in $S$. Note that the rank of $A(I,I')$ only depends on the set $R$. For any $R\subseteq \Phi\times \Phi$ we thus define $r_M(R)$ to be the rank of the matrix with rows $\{x-y \mid (x,y) \in R\}$ over the field $\ZZ/M\ZZ$ and we write \eqref{eqpr2} as
\[
\E[Y] \leq |B| \sum_{R\in \optset{R}_\Phi}\hspace{0.5em} \sum_{\substack{Q\in \optset{Q}_{\Phi,\marg{P},R}:\\\supp(Q) = R\\ \textnormal{$Q$ is $N$-type}}}\hspace{0.5em} \sum_{S\in T^N_Q} M^{-(k-1 + r_M(R))}.
\]
The number of $N$-types $Q$ on $R$ is at most $\binom{N + |R| - 1}{|R| - 1}$, while the size of the type class $T^N_Q$ is at most $2^{NH(Q)}$ for any $Q\in \optset{Q}_{\Phi, \marg{P}, R}$ (\cref{types}). Therefore, %
\begin{equation}\label{eq2}
\E[Y] %
\leq |B| \sum_{R\in \optset{R}_\Phi} \binom{N + |R| - 1}{|R| - 1} %
\mathop{\vphantom{\sum}\max}_{Q\in \optset{Q}_{\Phi,\marg{P},R}}\hspace{-0.5em} 2^{N H(Q)}\hspace{0.5em} M^{-(k-1 + r_M(R))}.
\end{equation}

Recall that we defined $r(R)$ as the rank of the matrix with rows $x-y$ for $(x,y)\in R$, over~$\QQ$. The value of $r_M(R)$ may be less than~$r(R)$, but only for finitely many primes~$M$. Since there are only finitely many possibilities for choosing $R$, there is an $N_0$ such that for $N > N_0$ and for all $R$ we have $r_M(R) = r(R)$. In the following we assume that $N$ (and thus~$M$) is already large enough for this to hold.
In \eqref{eq2}, we replace $r_M(R)$ by $r(R)$ and upper bound the sum over $R$ by $2^{|\Phi|^2}$ (a constant) times the largest summand, to obtain
\[
\E[Y] \leq |B| 2^{|\Phi|^2} \max_{R\in \optset{R}_\Phi} \binom{N + |R| - 1}{|R| - 1} %
\max_{Q\in \optset{Q}_{\Phi,\marg{P},R}}\hspace{-0.8em} 2^{N H(Q)}\hspace{0.5em} M^{-(k-1 + r(R))}.
\]
Now use that the binomial coefficient is polynomial in $N$.
This completes the proof.
\end{proof}

\subsection{Computational aspects}
We use \cref{stdnotation}.
Before discussing applications of \cref{main}, let us focus on how to compute the bound
\[
\combsubomega(\phi) \geq \max_{P\in\optset{P}_{\Phi}}\! \biggl(\!H(P) - (k-2) \max_{\mathclap{R\in \optset{R}_{\Phi}}} \frac{\max_{Q\in \optset{Q}_{\Phi,\marg{P},R}} H(Q) - H(P)}{r_\alpha(R)} \biggr),
\]
of \cref{main}.

\textbf{Choice of $Q$.}
Given $P$ and~$R$, computing $\max_{Q\in \optset{Q}_{\Phi,\marg{P},R}} H(Q)$ is a convex optimization  problem (and thus easy) since it amounts to maximizing a concave function over a convex set for each $i\in [k]$.

\textbf{Choice of $R$.}
The set $\optset{R}_\Phi$ may be large,
but we can greatly reduce its size as explained in the following two lemmas.

\begin{lemma}\label{subset}
If $R, R' \in \optset{R}_\Phi$ and $R\subseteq R'$ and $r(R) = r(R')$, then 
\[
\frac{\max_{Q\in \optset{Q}_{\Phi, \marg{P},R}} H(Q)}{r(R)} \leq \frac{\max_{Q\in \optset{Q}_{\Phi, \marg{P},R^{\smash{\prime}}}} H(Q)}{r(R')}.
\]
\end{lemma}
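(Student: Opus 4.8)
The plan is to exploit the hypothesis $r(R)=r(R')$ to reduce the inequality to a comparison of numerators only: since both sides have the same denominator, it suffices to show
\[
\max_{Q\in \optset{Q}_{\Phi,\marg{P},R}} H(Q)\ \leq\ \max_{Q\in \optset{Q}_{\Phi,\marg{P},R'}} H(Q).
\]
The key observation for this is the monotonicity of the feasible set $\optset{Q}_{\Phi,\marg{P},\,\cdot\,}$ under inclusion of its last argument. Concretely, because $R\subseteq R'$, any probability distribution $Q$ on $R$ may be regarded as a probability distribution on $R'$ by assigning zero mass to $R'\setminus R$. This zero-extension changes neither $H(Q)$ (the Shannon entropy depends only on the positive part of the distribution) nor any of the $2k$ marginals $Q_i$ (no mass is added, so each marginal on a component of $\Phi\times\Phi$ is unchanged). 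Hence $Q\in\optset{Q}_{\Phi,\marg{P},R}$ implies $Q\in\optset{Q}_{\Phi,\marg{P},R'}$, so $\optset{Q}_{\Phi,\marg{P},R}\subseteq\optset{Q}_{\Phi,\marg{P},R'}$ and the maximum over the larger set dominates.

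So the steps, in order, would be: first record that $r(R)=r(R')\geq 1$ and in particular that both denominators are positive. This uses that $R\in\optset{R}_\Phi$ is not contained in $\Delta_\Phi$, so $R$ contains some pair $(x,y)$ with $x\neq y$; since each $\alpha_i$ is injective on $B_i$, the corresponding row $\alpha(x)-\alpha(y)$ of the defining matrix is nonzero, whence $r_\alpha(R)\geq 1$. Second, carry out the zero-extension argument of the previous paragraph to get the inclusion $\optset{Q}_{\Phi,\marg{P},R}\subseteq\optset{Q}_{\Phi,\marg{P},R'}$ and hence the inequality of maxima of $H$. Third, divide that inequality through by $r(R)=r(R')$ to obtain the claimed bound.

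I do not expect a genuine obstacle here; the only point requiring care is to check that ``marginals on the $2k$ components of $R$'' for a distribution supported on $R\subseteq\Phi\times\Phi$ literally coincide with those of its zero-extension to $R'$, which is immediate from the definition. It is worth remarking that the hypothesis $r(R)=r(R')$ is used only to pass from the comparison of numerators to the comparison of the two ratios; the inclusion $\optset{Q}_{\Phi,\marg{P},R}\subseteq\optset{Q}_{\Phi,\marg{P},R'}$, and hence the numerator inequality, holds for any $R\subseteq R'$ in $\optset{R}_\Phi$.
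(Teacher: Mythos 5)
Your proof is correct: the zero-extension of a distribution on $R$ to $R'$ preserves both the entropy and all $2k$ marginals, giving $\optset{Q}_{\Phi,\marg{P},R}\subseteq\optset{Q}_{\Phi,\marg{P},R'}$, and dividing by the common positive value $r(R)=r(R')\geq 1$ yields the claim. The paper states this lemma without proof, and your argument is exactly the evident one being left implicit, with the minor details (injectivity of the $\alpha_i$ forcing $r(R)\geq 1$) correctly filled in.
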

\begin{proof}
We leave the proof to the reader.
\end{proof}

Recall that a subset $R\subseteq \Phi \times \Phi$ is an \emph{equivalence relation} if for all $x,y,z\in \Phi$ we have $(x,x) \in R$; $(x,y) \in R \Rightarrow (y,x) \in R$; and $(x,y)\in R \wedge (y,z)\in R \Rightarrow (x,z) \in R$. If $\Phi = \sqcup_i \Phi_i$ is a set partition of $\Phi$, then $R = \{(x,y)\in \Phi\times\Phi \mid \exists i: x\in \Phi_i \wedge y\in \Phi_i\}$ is an equivalence relation. The set partition into singletons yields the \emph{equality} equivalence relation $\Delta_\Phi = \{(x,x) \mid x\in \Phi\}$.

\begin{lemma}\label{equivrel}
Let $R\in \optset{R}_\Phi$. Then there is an $R'\in \optset{R}_\Phi$ which is an equivalence relation and such that $R\subseteq R'$ and $r(R) = r(R')$.
Therefore,  \cref{main} still holds if $\optset{R}_\Phi$ is replaced by the set
\[
\optset{R}_\Phi \cap \{\textnormal{equivalence relations}\}.
\]
\end{lemma}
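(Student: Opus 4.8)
The plan is to close $R$ up to an equivalence relation in the most economical linear-algebraic way. Given $R\in\optset{R}_\Phi$, let $W\coloneqq\Span_\QQ\{\alpha(x)-\alpha(y)\mid (x,y)\in R\}\subseteq\QQ^k$, so that $\dim W=r_\alpha(R)$ by definition, and set
\[
R' \coloneqq \{(x,y)\in\Phi\times\Phi \mid \alpha(x)-\alpha(y)\in W\}.
\]
First I would check that $R'$ is an equivalence relation: $\alpha(x)-\alpha(x)=0\in W$ gives reflexivity, $W$ being closed under negation gives symmetry, and $\alpha(x)-\alpha(z)=(\alpha(x)-\alpha(y))+(\alpha(y)-\alpha(z))$ together with $W$ being closed under addition gives transitivity. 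Clearly $R\subseteq R'$. Since every row $\alpha(x)-\alpha(y)$ with $(x,y)\in R'$ lies in $W$, the row space of the corresponding matrix is contained in $W$, so $r_\alpha(R')\le\dim W=r_\alpha(R)$; and $R\subseteq R'$ forces $r_\alpha(R)\le r_\alpha(R')$. Hence $r_\alpha(R)=r_\alpha(R')$.

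Next I would show $R'\in\optset{R}_\Phi$. From $R\not\subseteq\Delta_\Phi$ and $R\subseteq R'$ we get $R'\not\subseteq\Delta_\Phi$. For the second defining property of $\optset{R}_\Phi$, fix an index $i$ with $R\subseteq\mathcal{R}_i$. For $(x,y)\in R$ we then have $x_i=y_i$, hence $\alpha_i(x_i)=\alpha_i(y_i)$, i.e.\ the $i$-th coordinate of the generator $\alpha(x)-\alpha(y)$ of $W$ is zero; therefore $W\subseteq\{v\in\QQ^k\mid v_i=0\}$. Now take any $(x,y)\in R'$: the $i$-th coordinate $\alpha_i(x_i)-\alpha_i(y_i)$ of $\alpha(x)-\alpha(y)\in W$ vanishes, and injectivity of $\alpha_i$ gives $x_i=y_i$, i.e.\ $(x,y)\in\mathcal{R}_i$. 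Thus $R'\subseteq\mathcal{R}_i$ and $R'\in\optset{R}_\Phi$, an equivalence relation with $R\subseteq R'$ and $r_\alpha(R)=r_\alpha(R')$, proving the first assertion. The only point here needing a moment's care is exactly this inclusion $R'\subseteq\mathcal{R}_i$; everything else is routine, and the crux is that ``$(x,y)\in\mathcal{R}_i$'' means precisely the vanishing of the $i$-th coordinate of $\alpha(x)-\alpha(y)$, so confining $W$ to a coordinate hyperplane is automatically inherited by the whole closure $R'$.

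For the ``Therefore'' clause, let $R\in\optset{R}_\Phi$ be arbitrary and let $R'$ be its closure as above, so $R\subseteq R'$ and $r(R)=r(R')>0$ (positive since $R\not\subseteq\Delta_\Phi$ and the $\alpha_j$ are injective). Applying \cref{subset} to the pair $R\subseteq R'$ gives $\max_{Q\in\optset{Q}_{\Phi,\marg{P},R}}H(Q)\le\max_{Q\in\optset{Q}_{\Phi,\marg{P},R'}}H(Q)$; subtracting $H(P)$ from both sides and dividing by the common positive denominator $r(R)=r(R')$ shows that the expression maximised over $R$ in \cref{main} never decreases when $R$ is replaced by the equivalence relation $R'$. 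Hence
\[
\max_{R\in\optset{R}_\Phi}\frac{\max_{Q\in\optset{Q}_{\Phi,\marg{P},R}}H(Q)-H(P)}{r_\alpha(R)}
=\max_{\substack{R\in\optset{R}_\Phi\\ R\text{ an equivalence relation}}}\frac{\max_{Q\in\optset{Q}_{\Phi,\marg{P},R}}H(Q)-H(P)}{r_\alpha(R)},
\]
the inequality ``$\ge$'' being trivial since the right-hand maximum is over a subset. Therefore the lower bound of \cref{main} is unchanged, hence still valid, after restricting $\optset{R}_\Phi$ to equivalence relations. I expect no real obstacle beyond the bookkeeping already flagged: once one observes that $R$ lands in $\mathcal{R}_i$ iff its differences avoid the $i$-th coordinate axis, the linear span $W$ does all the work.
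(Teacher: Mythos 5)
Your proof is correct and rests on the same idea as the paper's: every difference vector $\alpha(x)-\alpha(y)$ of the enlarged relation lies in the span of those of $R$, so the rank is unchanged, and \cref{subset} then finishes the reduction. The only difference is cosmetic — you take the one-shot span-closure $R'=\{(x,y)\mid \alpha(x)-\alpha(y)\in W\}$ where the paper iterates the reflexive/symmetric/transitive closure operations — and you additionally spell out the check, left implicit in the paper, that $R'$ still lies in $\optset{R}_\Phi$ (in particular $R'\subseteq\mathcal{R}_i$ via injectivity of $\alpha_i$).
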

\begin{proof}
We can extend $R$ to an equivalence relation without increasing its rank. Namely, let $\Delta_\Phi = \{(x,x) \mid x\in \Phi\}$,  $R^T = \{(y,x) \mid (x,y) \in R\}$ and $R\circ R = \{(x,z) \mid (x,y), (y,z) \in R\}$. Then $r(R\cup \Delta_\Phi)=r(R)$, $r(R\cup R^T)=r(R)$ and  $r(R \cup (R\circ R))=r(R)$.  %
As mentioned above, if $R\subseteq R'$ with $r(R) = r(R')$, then only $R'$ needs to be considered. Therefore, replacing $\optset{R}_\Phi$ by $\optset{R}_\Phi\cap \{\textnormal{equivalence relations}\}$ will not change the optimal value of the maximization over $\optset{R}_\Phi$.
\end{proof}

We state one more simple fact.

\begin{lemma}\label{kmintwo}
$r(\mathcal{R}_i) \leq k-2$.
\end{lemma}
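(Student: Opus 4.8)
The plan is to show that \emph{every} row of the matrix whose rank defines $r(\mathcal{R}_i)$ lies in one fixed $(k-2)$-dimensional subspace of $\QQ^k$, obtained as the intersection of two hyperplanes: one coming from the definition of $\mathcal{R}_i$, the other from tightness.

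Concretely, I would first identify $\Phi$ with $\alpha(\Phi)\subseteq\ZZ^k\subseteq\QQ^k$, exactly as in the proof of \cref{main}, so that a typical row of the matrix in question is a vector $x-y$ with $x,y\in\Phi$ and $x_i=y_i$. From $x_i=y_i$ we get $(x-y)_i=0$, so every such row lies in the coordinate hyperplane $H_1:=\{z\in\QQ^k\mid z_i=0\}$. Next I would invoke tightness (\cref{deftight}): for every $x\in\Phi$ we have $\sum_{j=1}^k x_j=\sum_{j=1}^k\alpha_j(b_j)=0$, and likewise for $y$; subtracting gives $\sum_{j=1}^k (x-y)_j=0$, so every row also lies in $H_2:=\{z\in\QQ^k\mid \sum_{j=1}^k z_j=0\}$.

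Finally, for $k\geq 2$ the two linear functionals $z\mapsto z_i$ and $z\mapsto\sum_{j=1}^k z_j$ are linearly independent (the all-ones covector is not a scalar multiple of the $i$-th coordinate covector), so $\dim(H_1\cap H_2)=k-2$. Since the row space of the matrix is contained in $H_1\cap H_2$, this yields $r(\mathcal{R}_i)\leq k-2$. There is essentially no obstacle here; the only point worth a second glance is the degenerate case $k=2$, where in fact $\mathcal{R}_i\subseteq\Delta_\Phi$ (by injectivity of $\alpha_i$ combined with tightness) so the matrix has no nonzero rows, consistent with the bound $k-2=0$.
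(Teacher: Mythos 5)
Your proof is correct and is essentially identical to the paper's: both observe that every row $\alpha(x)-\alpha(y)$ with $(x,y)\in\mathcal{R}_i$ satisfies $w_i=0$ (from $x_i=y_i$) and $\sum_{j=1}^k w_j=0$ (from tightness), hence lies in a $(k-2)$-dimensional subspace. The remark about $k=2$ is a fine sanity check but not needed.
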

\begin{proof}
The support $\Phi$ is tight with respect to $\alpha$, so for every $x\in \Phi$ we have $\sum_{j=1}^{k} \alpha_j(x_j) = 0$. So any $w\in \Span_\QQ\{\alpha(x) - \alpha(y) \mid (x,y) \in \mathcal{R}_i\}$ satisfies the equations $w_i = 0$ and $\sum_{j=1}^{k} w_j = 0$ and hence the span has rank at most~$k-2$.
\end{proof}

\textbf{Choice of $\alpha$.} Finally, we discuss the choice of the map $\alpha = (\alpha_1, \ldots, \alpha_k)$. If $\alpha$ and $\beta$ are such maps then $\phi$ is also tight for $\gamma = (\gamma_1, \ldots, \gamma_k)$ with $\gamma_i\coloneqq\alpha_i+C\beta_i$ for any integer $C$, except that $\gamma_i$ may fail to be injective for finitely many $C$. For a given $R\in \optset{R}_\Phi$, $r_\alpha(R)$ is the rank of some matrix $X$ and $r_\beta(R)$ is the rank of some other matrix $Y$, while $r_\gamma(R)$ is the rank of $X+CY$. But the latter is at least $\max\{r_\alpha(R),r_\beta(R)\}$ except for at most finitely many values of $C$. Taking into account every $R$ still excludes only finitely many values of $C$, hence there is at least one good $C$. So in theory one can proceed by finding a new $k$-tuple of injective maps with at least one $r(R)$ higher than what one already has and then improve by finding suitable linear combination. There are finitely many relations and the ranks are never larger than $k-2$, therefore only finitely many improvements are possible. %

\subsection{Applications of \cref{main}}\label{appl}
We finish this section by giving some example applications of \cref{main}, one of which is the result on the weight-$(2,2)$ Dicke tensor that we use in combination with \cref{cwcomplete} to upper bound the exponent of the complete graph tensor. We use \cref{stdnotation}.

First, as a sanity check, we derive from \cref{main} the lower-bound part of Strassen's \cref{strassentripartite} on the monomial subexponent of tight 3-tensors.

\begin{lemma}\label{chain}
Fix marginal distributions $\marg{P} = (P_1, \ldots, P_k)$ on the $k$ components of $\Phi$ and let $P\in \optset{P}_\Phi$ be an element that has maximal entropy among all elements of $\Phi$ with marginals $P_1, \ldots, P_k$.
Let $R\in \optset{R}_\Phi$ such that $R\subseteq \mathcal{R}_i$ and let $Q \in \optset{Q}_{\Phi,\marg{P},R}$. Then,
\[
H(Q) \leq 2H(P) - H(P_i).
\]
\end{lemma}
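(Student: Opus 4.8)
The claim is that for a tight tensor $\phi$ with support $\Phi$, if $P$ maximizes $H$ among all distributions on $\Phi$ with given marginals $\marg P = (P_1,\dots,P_k)$, and $Q\in\optset{Q}_{\Phi,\marg P, R}$ with $R\subseteq\mathcal R_i$, then $H(Q) \le 2H(P) - H(P_i)$. Here $Q$ is a distribution on $R\subseteq\Phi\times\Phi$ whose $2k$ marginals satisfy $Q_j = Q_{k+j} = P_j$ for all $j$, and moreover $R\subseteq\mathcal R_i$ means every $(x,y)\in\supp Q$ has $x_i = y_i$. The natural tool is the entropy chain rule together with the maximal-entropy property of $P$.

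**Main steps.** First I would think of $Q$ as the joint distribution of a pair of random variables $(X,Y)$ each taking values in $\Phi$, where $X$ records the "first copy" and $Y$ the "second copy." Write $X = (X_1,\dots,X_k)$ and $Y = (Y_1,\dots,Y_k)$ for the component random variables. The marginal conditions say $X$ has law $P$ and $Y$ has law $P$ (since the marginals $Q_j$ for $j\le k$ reconstruct the law of $X$ — actually one must be slightly careful: $Q_j = P_j$ only fixes the one-dimensional marginals of $X$, not its full law; but in fact $\optset{Q}_{\Phi,\marg P,R}$ is defined by $Q_j = Q_{k+j} = P_j$, so $X$'s marginals are the $P_j$'s, and $X$ takes values in $\Phi$). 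The key point: since $P$ has maximal entropy among distributions on $\Phi$ with marginals $P_1,\dots,P_k$, we get $H(X) \le H(P)$ and $H(Y)\le H(P)$ — wait, that's backwards. We need $H(X)\le H(P)$, but $P$ is the maximizer so actually $H(X)\le H(P)$ holds automatically because $X$ is supported on $\Phi$ with those marginals. Good. So $H(X) \le H(P)$ and $H(Y)\le H(P)$.

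Now apply the chain rule: $H(Q) = H(X,Y) = H(X) + H(Y\mid X) \le H(P) + H(Y\mid X)$. The task reduces to showing $H(Y\mid X) \le H(P) - H(P_i)$, i.e. $H(Y\mid X)\le H(Y) - H(Y_i)$ (using $H(Y)\le H(P)$ — hmm, need $H(Y\mid X)\le H(P)-H(P_i)$ and we'd like to bound via $H(Y)$; but $H(Y)$ could be strictly less than $H(P)$, which only helps). The cleanest route: $H(Y\mid X) \le H(Y\mid X_i)$ since conditioning on more reduces entropy. Now on $\supp Q\subseteq R\subseteq\mathcal R_i$ we have $X_i = Y_i$ deterministically, so $Y_i$ is a function of $X_i$. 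Hence $H(Y\mid X_i) = H(Y, Y_i\mid X_i) = H(Y_i\mid X_i) + H(Y\mid Y_i, X_i) = 0 + H(Y\mid Y_i, X_i) \le H(Y\mid Y_i) = H(Y) - H(Y_i)$. Since $Y_i$ has law $P_i$, $H(Y_i) = H(P_i)$, and $H(Y)\le H(P)$, we conclude $H(Y\mid X)\le H(P) - H(P_i)$, and therefore $H(Q) \le H(P) + H(P) - H(P_i) = 2H(P) - H(P_i)$, as desired. I should double-check the direction of $H(Y)\le H(P)$: we need an \emph{upper} bound on $H(Y) - H(Y_i)$; replacing $H(Y)$ by the larger quantity $H(P)$ is valid, so the inequality $H(Y\mid X_i)\le H(Y)-H(Y_i)\le H(P)-H(P_i)$ is fine.

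**Expected obstacle.** The argument above is essentially a two-step chain-rule manipulation plus the maximality of $P$, so there is no deep obstacle — the subtlety is purely bookkeeping: making sure the marginal conditions on $Q$ genuinely give that $X$ (and $Y$) are $\Phi$-valued with marginals $P_j$, so that $H(X)\le H(P)$ applies, and making sure that the constraint $R\subseteq\mathcal R_i$ is used exactly where $Y_i$ becomes a function of $X_i$. One should also note that the bound does not use tightness of $\phi$ at all (tightness enters elsewhere in the proof of \cref{main}, e.g. via $r_\alpha$), only the maximal-entropy hypothesis on $P$ and the structural constraints defining $\optset{Q}_{\Phi,\marg P,R}$ and $\optset{R}_\Phi$.
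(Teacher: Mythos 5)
Your proof is correct and is essentially the paper's argument in slightly different bookkeeping: both reduce to the inequality $H(Q)\leq H(Q_{[k]})+H(Q_{k+[k]})-H(P_i)$ by exploiting that the shared coordinate $X_i=Y_i$ contributes its entropy only once (the paper conditions on $Q_i$ first and invokes strong subadditivity, you condition on all of $X$ and then drop to $X_i$), and both finish with the maximal-entropy property of $P$ applied to the two $\Phi$-marginals of $Q$. Your closing remark that tightness is not used here is also accurate.
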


\begin{proof}
Let $\overline{i}=\{1, \ldots, \widehat{i}, \ldots, k\}$ and let $k+\overline{i}=\{k+1, \ldots, \widehat{k+i}, \ldots, 2k\}$, where a hat denotes an omitted index. In this proof, if $K\subseteq[2k]$ is a set of indices, then for any $Q\in \optset{Q}_{\Phi,\marg{P},R}$ we will write $Q_K$ for the marginal distribution of $Q$ on these indices.
Let ($\ast$) denote the assumption that among all distributions in $\optset{P}_\Phi$ with marginals $P_1, \ldots, P_k$ the distribution $P$ is the one with maximal entropy. %
Let $Q\in \optset{Q}_{\Phi,\marg{P},R}$.
Then, 
\begin{align*}
H(Q) %
&= H(Q_i) + H(Q_{\overline{i} \cup (k+[k])} \mid Q_i) \tag{entropy chain rule}\\
     &\leq H(Q_i) + H(Q_{\overline{i}} \mid Q_i) + H(Q_{k + [k]} \mid Q_i) \tag{strong sub-additivity}\\
     &\leq H(Q_i) + H(Q_{\overline{i}\cup\{i\}}) - H(Q_i)\\
     &\phantom{{}\leq H(Q_i){}} {}+ H(Q_{(k+[k]) \cup \{i\}}) - H(Q_i) \tag{entropy chain rule}\\
     &\leq H(Q_i) + H(Q_{[k]}) - H(Q_i)\\
     &\phantom{{}\leq H(Q_i){}} {}+ H(Q_{k+[k]}) - H(Q_i) \tag{since $R\subseteq \mathcal{R}_i$}\\
     &\leq 2H(P) - H(P_i), \tag{by ($\ast$)}
\end{align*}
which proves the lemma.
\end{proof}

\begin{corollary}[Strassen \cite{strassen1991degeneration}]
Let $\phi$ be a tight 3-tensor. Then
\[
\combsubomega(\phi) \geq \max_{P\in \optset{P}_\Phi} \min \{ H(P_1), H(P_2), H(P_3) \}.
\]
\end{corollary}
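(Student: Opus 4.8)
The plan is to specialize \cref{main} to $k=3$ and combine it with \cref{chain} and a one-line rank computation. For $k=3$ the factor $k-2$ equals $1$, so \cref{main} reads
\[
\combsubomega(\phi) \geq \max_{P\in\optset{P}_{\Phi}}\Bigl(H(P) - \max_{R\in \optset{R}_{\Phi}} \frac{\max_{Q\in \optset{Q}_{\Phi,\marg{P},R}} H(Q) - H(P)}{r_\alpha(R)}\Bigr).
\]
The first step is to observe that every $R\in\optset{R}_\Phi$ satisfies $r_\alpha(R)=1$. Indeed, $R\subseteq\mathcal{R}_i$ for some $i$, so $r_\alpha(R)\leq r_\alpha(\mathcal{R}_i)\leq k-2=1$ by \cref{kmintwo}; and since $R$ is not contained in $\Delta_\Phi$ there is a pair $(x,y)\in R$ with $x\neq y$, hence $x_j\neq y_j$ for some $j$, hence $\alpha_j(x_j)\neq\alpha_j(y_j)$ by injectivity of $\alpha_j$, so the row $\alpha(x)-\alpha(y)$ is nonzero and $r_\alpha(R)\geq 1$. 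Thus all the denominators are $1$ and disappear.

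Next I would fix a $P\in\optset{P}_\Phi$ and, without loss of generality, replace it by the distribution $P^\ast$ of maximal entropy among all distributions on $\Phi$ having the same marginals $P_1,P_2,P_3$; this only increases $H(P)$ and leaves the marginals — hence $\min_i H(P_i)$ — unchanged, and it is exactly the hypothesis $(\ast)$ of \cref{chain}. Applying \cref{chain} to $P^\ast$: for any $R\in\optset{R}_\Phi$ with $R\subseteq\mathcal{R}_i$ and any $Q\in\optset{Q}_{\Phi,\marg{P},R}$ one has $H(Q)\leq 2H(P^\ast)-H(P_i)$, so
\[
\frac{\max_{Q\in\optset{Q}_{\Phi,\marg{P},R}} H(Q)-H(P^\ast)}{r_\alpha(R)} = \max_{Q\in\optset{Q}_{\Phi,\marg{P},R}} H(Q)-H(P^\ast) \leq H(P^\ast)-H(P_i) \leq H(P^\ast)-\min_i H(P_i).
\]
Taking the maximum over $R$ and substituting into the specialized form of \cref{main} gives $\combsubomega(\phi)\geq H(P^\ast)-\bigl(H(P^\ast)-\min_i H(P_i)\bigr)=\min\{H(P_1),H(P_2),H(P_3)\}$. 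Finally, taking the supremum over all $P\in\optset{P}_\Phi$ (a maximum, by compactness) yields the claimed bound; in the degenerate case $\optset{R}_\Phi=\emptyset$, \cref{main} already gives the stronger $\combsubomega(\phi)\geq\max_P H(P)$.

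I do not expect any real obstacle: the mathematical substance is all contained in \cref{main} and \cref{chain}, and the only genuinely new point is the collapse $1\leq r_\alpha(R)\leq k-2$ forcing $r_\alpha(R)=1$ when $k=3$, which is what makes the single error term manageable. The one modeling decision to state carefully is the reduction to the maximal-entropy representative $P^\ast$ of a fixed marginal profile, since that is what licenses the use of \cref{chain}'s bound $2H(P^\ast)-H(P_i)$.
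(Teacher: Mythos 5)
Your proposal is correct and follows essentially the same route as the paper's own proof: specialize \cref{main} to $k=3$, observe $1\leq r_\alpha(R)\leq k-2=1$ via \cref{kmintwo}, pass to the maximal-entropy representative with the given marginals, and apply \cref{chain}. The only differences are cosmetic (you spell out the injectivity argument for $r_\alpha(R)\geq 1$ and the degenerate case $\optset{R}_\Phi=\emptyset$, which the paper leaves implicit).
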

\begin{proof}
Let $P\in\optset{P}_\Phi$. Then by \cref{main},
\begin{equation}\label{streq}
\combsubomega(\phi) \geq H(P) - \max_{R\in \optset{R}_\Phi}  \frac{\max_{Q\in \optset{Q}_{\Phi,\marg{P},R}} H(Q) - H(P)}{r_\alpha(R)}.
\end{equation}
Let $P_1, \ldots, P_k$ be the marginals of $P$. We may assume that $P$ has maximal entropy among all elements of $\optset{P}_\Phi$ with marginals $P_1, \ldots, P_k$, since $\optset{Q}_{\Phi,\marg{P},R}$ depends on $P$ only through its marginals.
Let $R\in \optset{R}_\Phi$. Then $r(R) \geq 1$ (since $R\not\subseteq \Delta_{\Phi}$) and $r(R) \leq k-2 = 1$ (\cref{kmintwo}). Therefore, $r(R) = 1$. %
Combine this fact with \eqref{streq} and \cref{chain} to obtain
\[
\combsubomega(\phi) \geq H(P) - \max_{i} (2H(P) - H(P_i)  - H(P)) = \min_{i} H(P_i),
\]
which proves the corollary.
\end{proof}

Second, we derive from \cref{main} the lower-bound part of \cref{wstate} on the subexponent of the W-state tensors $W_k = D_{(1, k-1)}$. %

\begin{corollary}[Vrana--Christandl \cite{vrana2015asymptotic}]\label{wstate2} Let $k\geq 3$. %
Then
\[
\combsubomega(W_k) = \subomega(W_k) = h\bigl(k^{-1} \bigr),
\]
where $h(p)$ denotes the binary entropy function.
\end{corollary}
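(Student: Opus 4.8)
The plan is to sandwich $\combsubomega(W_k)$ between two copies of $h(1/k)$. For the upper bound, note $W_k=D_{(1,k-1)}$ and $H\bigl((1,k-1)/k\bigr)=h(1/k)$, so the bound $\subomega(D_\lambda)\le H(\lambda/k)$ of \cite{vrana} gives $\subomega(W_k)\le h(1/k)$, while $\combsubomega(W_k)\le\subomega(W_k)$ since a monomial degeneration is in particular a degeneration. It therefore remains to derive the matching lower bound $\combsubomega(W_k)\ge h(1/k)$ from \cref{main}.

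First I would work out the combinatorics of $\Phi:=\supp W_k$. Writing $e_a$ for the support element carrying $b_1$ in the $a$th leg and $b_2$ in every other leg, we have $\Phi=\{e_1,\dots,e_k\}$, which I identify with $[k]$ via $e_a\mapsto a$. Using the tightness witness $\alpha$ of \cref{Dtight} one checks that $\alpha(e_a)-\alpha(e_b)$ is the vector in $\ZZ^k$ with a $+1$ in coordinate $b$, a $-1$ in coordinate $a$, and zeros elsewhere (the $k$th-coordinate shift cancels in the difference), so $r_\alpha(R)$ equals the $\QQ$-rank of the incidence structure of the graph $H_R$ on $[k]$ with an edge $\{a,b\}$ whenever $(e_a,e_b)\in R$, $a\ne b$. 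By \cref{equivrel} it suffices to let $R$ range over equivalence relations; since $(e_a)_j=(e_b)_j$ with $a\ne b$ forces $a,b\ne j$, any such $R\subseteq\mathcal R_j$ corresponds to a partition of $[k]$ in which $\{j\}$ is a singleton block and the remaining $k-1$ points are grouped into blocks $B_1,\dots,B_m$ of sizes $n_1,\dots,n_m$ with at least one $n_l\ge 2$, and then $r_\alpha(R)=\sum_l(n_l-1)$.

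Next I would bound $\max_{Q\in\optset Q_{\Phi,\marg P,R}}H(Q)$ for $P$ the uniform distribution on $\Phi$. For $W_k$ a distribution on $\Phi$ is determined by its $k$ leg-marginals (since $P(e_i)=P_i(b_1)$), so $P$ is the unique distribution with marginals $P_1=\dots=P_k=(1/k,(k-1)/k)$ and $\optset Q_{\Phi,\marg P,R}$ is exactly the set of distributions on $R$ whose two $\Phi$-marginals are both uniform. Decomposing $Q$ according to which block $B_l$ the pair lies in, the mass in a block of size $n$ is forced to be $n/k$, so the block-index entropy is $\log_2 k-\tfrac1k\sum_l n_l\log_2 n_l$, while the conditional entropy inside $B_l$ is at most $2\log_2 n_l$ because $B_l\times B_l$ has $n_l^2$ elements; adding up,
\[
\max_{Q\in\optset Q_{\Phi,\marg P,R}}H(Q)\ \le\ \log_2 k+\tfrac1k\sum_l n_l\log_2 n_l .
\]
Hence $\frac{\max_Q H(Q)-H(P)}{r_\alpha(R)}\le \frac1k\cdot\frac{\sum_l n_l\log_2 n_l}{\sum_l(n_l-1)}$, which is a weighted average (with weights $n_l-1$) of the numbers $g(n_l)$, $g(n):=\tfrac{n\log_2 n}{n-1}$, over the blocks with $n_l\ge 2$; since $\tfrac{d}{dn}\tfrac{n\ln n}{n-1}=\tfrac{n-1-\ln n}{(n-1)^2}\ge 0$, $g$ is increasing, so this average is at most $g(k-1)$, attained at $R=\mathcal R_j$ (one block of size $k-1$). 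Substituting $\max_{R}\frac{\max_Q H(Q)-H(P)}{r_\alpha(R)}=\tfrac1k g(k-1)=\tfrac{(k-1)\log_2(k-1)}{k(k-2)}$ into \cref{main} with $P$ uniform gives
\[
\combsubomega(W_k)\ \ge\ \log_2 k-(k-2)\,\frac{(k-1)\log_2(k-1)}{k(k-2)}\ =\ \log_2 k-\tfrac{k-1}{k}\log_2(k-1),
\]
and the right-hand side equals $h(1/k)$, which closes the sandwich.

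The main obstacle is that the quick route through \cref{chain} together with the crude estimate $r_\alpha(R)\ge 1$ is too lossy: it only yields $\log_2 k-(k-2)\bigl(\log_2 k-h(1/k)\bigr)$, which is strictly smaller than $h(1/k)$ for $k\ge 4$. The argument genuinely needs the matched bound $\max_Q H(Q)-H(P)\le\tfrac1k\sum_l n_l\log_2 n_l$ paired with the rank $\sum_l(n_l-1)$ and the monotonicity of $g$, so that relations $R$ of small rank are penalised by correspondingly small entropy gain. The remaining steps — computing $\alpha$ and $r_\alpha$, the entropy bookkeeping, and the final identity with $h(1/k)$ — are routine.
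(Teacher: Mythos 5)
Your proof is correct and follows essentially the same route as the paper's: the uniform choice of $P$, the reduction to equivalence relations contained in some $\mathcal{R}_j$ via \cref{equivrel}, the rank formula $r_\alpha(R)=\sum_l(n_l-1)$, and the block-entropy bound $H(Q)\le\log_2 k+\tfrac1k\sum_l n_l\log_2 n_l$ all match the paper's computation, and the upper bound is handled the same way (by citation). The only cosmetic difference is the final optimization over partition types: you use monotonicity of $g(n)=n\log_2 n/(n-1)$ and a weighted average, whereas the paper first minimizes $H(\lambda/k)$ for fixed length via majorization and then differentiates in $\ell$; both identify the single block of size $k-1$ as optimal.
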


\begin{proof}%
For the proof of the upper bound $\subomega(W_k) \leq h(k^{-1})$ we refer to the proof of Theorem~11 in \cite{vrana2015asymptotic}.
We will here give a proof for the lower bound $\combsubomega(W_k) \geq h(k^{-1})$, fixing a small gap in the proof of \cite{vrana2015asymptotic}. Let $\Phi$ be $\supp W_k$ which we identify with the set $\{(1,0,\ldots,0), (0,1,\ldots, 0), \ldots, (0,0,\ldots, 1)\}$.
Let~$P$ be the uniform probability distribution on $\Phi$. Then the marginals of~$P$ are $P_i(1) = 1 - P_i(0) = \tfrac1k$.  By \cref{main},
\begin{equation}\label{vcmain}
\combsubomega(W_k) \geq \log_2 k - (k-2) \max_{R\in \optset{R}_\Phi}\frac{\max_{Q\in \optset{Q}_{\Phi,\marg{P},R}} H(Q) - \log_2 k}{r(R)}.
\end{equation}

We may assume that $\optset{R}_\Phi$ consists of equivalence relations $R\subseteq \mathcal{R}_1$ by \cref{equivrel} and the symmetry of $\Phi$. An equivalence relation on $\Phi$ is just a set partition of $\Phi$ and we say that the \emph{type} of such a set partition is the integer partition consisting of the sizes of the parts occurring in the set partition.  Up to permuting the elements of $\Phi$, an equivalence relation on $\Phi$ is characterized by its type. %
Let $\lambda = (\lambda_1, \lambda_2, \ldots, \lambda_{\ell(\lambda)}) \vdash k$ be an integer partition with $\ell(\lambda)$ parts. Let $R_\lambda \in \optset{R}_\Phi$ be any equivalence relation of type $\lambda$. Since $R_{\lambda} \subseteq \mathcal{R}_1$, we have $\lambda_{\ell(\lambda)} = 1$.
It is not difficult to see that $r(R_\lambda) = k - \ell(\lambda)$ (with $\alpha$ chosen as in \cref{Dtight}); indeed if $C_1, \ldots, C_{\ell(\lambda)}$ are the equivalence classes  of $R_\lambda$ in nonincreasing order, then $r(C_i) = \lambda_i - 1$ and $r(R_\lambda) = \sum_{i=1}^{\ell(\lambda)} r(C_i)$ since $\phi = W_k$. Therefore,
\begin{equation}\label{vcrank}
r(R_\lambda) = \sum_{i=1}^{\smash{\ell(\lambda)}} (\lambda_i - 1) = k - \ell(\lambda).
\end{equation} 
A probability distribution $Q$ on $R_\lambda$ can be identified with a block-diagonal matrix with $\lambda_i\times \lambda_i$ blocks, and the condition on the marginals means that the row and column sums are $\tfrac{1}{k}$. Using permutation symmetry and concavity of the entropy and permutation symmetry of $\optset{Q}_{\Phi, \marg{P},R}$, one can see that $\max_{Q\in \optset{Q}_{\Phi,\marg{P},R}} H(Q)$ is attained when~$Q$ is constant in each block.  In this case
\begin{equation}\label{vcQ}
H(Q) = \sum_{i} \lambda_i^2\, \Bigl( \frac{1}{\lambda_i k} \log_2 (\lambda_i k) \Bigr) = 2\log_2 k - H\Bigl(\frac{\lambda}{k}\Bigr).
\end{equation}
Combine \eqref{vcmain}, \eqref{vcrank} and \eqref{vcQ} to get
\[
\combsubomega(W_k) \geq  \log_2 k - (k-2) \max_{\lambda} \frac{\log_2 k - H(\tfrac{\lambda}{k})}{k - \ell(\lambda)}.
\]

If $k$ and $\ell = \ell(\lambda)$ are fixed, then the minimum of $H(\tfrac{\lambda}{k})$ is attained at the partition $\lambda = (k-\ell + 1, 1^{\ell - 1})$, because the other distributions can be expressed as a convex combinations of its permutations. Therefore we need to look for the maximum of
\[
\frac{\log_2 k - H\bigl(\tfrac{(k - \ell + 1,\, 1^{\ell - 1})}{k}\bigr)}{k-\ell} = \frac{(k-\ell + 1) \log_2(k - \ell + 1)}{k(k - \ell)}.
\]
Taking the derivative with respect to $\ell$, we get
\[
\frac{d}{d\ell} \frac{(k-\ell + 1) \log_2(k - \ell + 1)}{k(k - \ell)} = \frac{\ln(1 + k - \ell) - (k - \ell)}{k(k-\ell)^2\ln2} \leq 0,
\]
using $\ln(1+x) \leq x$. Therefore, the optimum is at $\ell = 2$ and this gives
\[
\combsubomega(W_k) \geq \log_2 k - \bigl( \log_2 k - h\bigl( \tfrac{1}{k}\bigr) \bigr) = h(k^{-1}),
\]
which proves the corollary.
\end{proof}

Finally, we compute the (monomial) subexponent of the weight-$(2,2)$ Dicke tensor.

\begin{corollary}[Weight-$(2,2)$ Dicke tensor] Let $D_{(2,2)}$ be the weight-$(2,2)$ Dicke tensor, 
\begin{alignat*}{4}
D_{(2,2)} ={}&\,  b_0\otimes b_0\otimes b_1\otimes b_1
 {}+{}&\, b_0\otimes b_1\otimes b_0\otimes b_1
 {}+{}&\, b_0\otimes b_1\otimes b_1\otimes b_0\\
 {}+{}&\, b_1\otimes b_0\otimes b_0\otimes b_1
 {}+{}&\, b_1\otimes b_0\otimes b_1\otimes b_0
 {}+{}&\, b_1\otimes b_1\otimes b_0\otimes b_0.
\end{alignat*}
Then $\combsubomega(D_{(2,2)}) = \subomega(D_{(2,2)}) = 1$.
\end{corollary}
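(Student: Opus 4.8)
The plan is to prove the two matching bounds. The upper bound is a known one: $\combsubomega(D_{(2,2)}) \le \subomega(D_{(2,2)}) \le H\bigl((2,2)/4\bigr)=H\bigl(\tfrac12,\tfrac12\bigr)=1$, where the first inequality holds because monomial degeneration is in particular a degeneration, and the second is the bound $\subomega(D_\lambda)\le H(\lambda/k)$ of \cite{vrana}. So it remains to prove $\combsubomega(D_{(2,2)})\ge 1$, and for this I apply \cref{main}. I identify $\Phi=\supp D_{(2,2)}$ with the six weight-two vectors in $\{0,1\}^4$ and take the tight structure $\alpha$ of \cref{Dtight}; for this $\alpha$ one has $\alpha(x)-\alpha(y)=x-y$ for all $x,y\in\Phi$, so $r_\alpha(R)$ is the rank over $\QQ$ of $\{x-y\mid(x,y)\in R\}$. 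I take $P$ to be the uniform distribution on $\Phi$: then $H(P)=\log_2 6$, every marginal is $P_i=(\tfrac12,\tfrac12)$ with $H(P_i)=1$, and since the uniform distribution has maximal entropy among all distributions on $\Phi$, it in particular has maximal entropy among those with marginals $P_1,\dots,P_k$, so \cref{chain} is applicable with this $P$. By \cref{equivrel} and the $S_4$-symmetry of $\Phi$ permuting the tensor legs, it suffices to take the maximum over $R$ in \cref{main} over equivalence relations $R$ contained in $\mathcal{R}_1$, i.e.\ over set partitions of $\Phi$ refining the partition into the two size-three classes $A_0,A_1$ fixed by the first coordinate; by \cref{subset} only the inclusion-maximal such $R$ of each value of $r_\alpha(R)$ matter, and $1\le r_\alpha(R)\le k-2=2$ by \cref{kmintwo}. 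Hence there are two cases.

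In the case $r_\alpha(R)=2$, by \cref{subset} it suffices to treat $R=\mathcal{R}_1$ itself; \cref{chain} gives $\max_{Q\in\optset{Q}_{\Phi,\marg{P},\mathcal{R}_1}}H(Q)\le 2H(P)-H(P_1)=\log_2 18$, and this is attained by the uniform distribution on the $18$ pairs of $\mathcal{R}_1$ (which lies in $\optset{Q}_{\Phi,\marg{P},\mathcal{R}_1}$), so the inner expression of \cref{main} for this $R$ equals $(\log_2 18-\log_2 6)/2=\tfrac12\log_2 3$. In the case $r_\alpha(R)=1$, a short inspection of the difference vectors $x-y$ within $A_0$ and within $A_1$ shows that the inclusion-maximal admissible $R$ is, up to the $S_4$-symmetry, the equivalence relation with exactly two non-singleton classes, each a pair of support elements, chosen so that the two classes are disjoint and have coinciding difference vectors (so that the combined rank is $1$). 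For this $R$ I bound $H(Q)\le H(Q_{[k]})+H(Q_{k+[k]}\mid Q_{[k]})$: here $Q_{[k]}$ is a distribution $m$ on $\Phi$ all of whose coordinate marginals equal $(\tfrac12,\tfrac12)$, and $H(Q_{k+[k]}\mid Q_{[k]})$ is at most the $m$-probability that $x$ falls into one of the two non-singleton classes; a short concave optimization over the admissible $m$ then gives $\max_Q H(Q)<\log_2 6+\tfrac12\log_2 3$, so the inner expression for this $R$ is strictly below $\tfrac12\log_2 3$. Therefore the maximum over $R$ equals $\tfrac12\log_2 3$, and \cref{main} yields $\combsubomega(D_{(2,2)})\ge H(P)-(k-2)\cdot\tfrac12\log_2 3=\log_2 6-\log_2 3=1$, as required.

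I expect the rank-$1$ case to be the main obstacle. One must (i) correctly enumerate the inclusion-maximal rank-$1$ equivalence relations inside $\mathcal{R}_i$, the subtle point being that two disjoint transposed pairs of support elements can have parallel (in fact equal) difference vectors and hence combined rank $1$, so a single transposed pair is never inclusion-maximal; and (ii) establish the strict estimate $\max_Q H(Q)<\log_2 6+\tfrac12\log_2 3$ for that $R$. \cref{chain} alone only gives $\max_Q H(Q)\le\log_2 18$, which would yield the inner value $\log_2 3>\tfrac12\log_2 3$ and a useless bound, so the sharper conditioning argument via $H(Q)\le H(Q_{[k]})+H(Q_{k+[k]}\mid Q_{[k]})$ together with the optimization over $m$ is genuinely needed. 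Everything else reduces to finite checks.
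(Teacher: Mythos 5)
Your proposal is correct and follows essentially the same route as the paper: uniform $P$ on the six support elements, reduction via \cref{equivrel} and the $S_4$-symmetry to equivalence relations inside $\mathcal{R}_1$, and the same two surviving cases ($R=\mathcal{R}_1$ with $r=2$ and $\max H(Q)=\log_2 18$, versus the type-$(2,1)$ relation with $r=1$ and $\max H(Q)=\log_2 10$), yielding the bound $\min\{1,\log_2\tfrac{54}{25}\}=1$. The only cosmetic differences are that the paper observes directly that the uniform distribution on each $R$ has the required marginals and is therefore optimal (so your conditional-entropy optimization in the rank-$1$ case, which indeed evaluates to $\log_2 10 < \log_2 6 + \tfrac12\log_2 3$, is not needed), and it obtains the matching upper bound from a rank-$2$ flattening rather than from the general $\subomega(D_\lambda)\le H(\lambda/k)$ bound.
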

\begin{proof}
Let $P$ be the uniform probability distribution on the support $\Phi$ of~$D_{(2,2)}$ which we identify with $\{(1,1,0,0), (1,0,1,0), \ldots\}$. Then $H(P) = \log_2 6$.
Note that the marginals $P_i$ are uniform distributions on $\{0,1\}$.
Let $R\subseteq \optset{R}_\Phi$. By the permutation symmetry of $D_{(2,2)}$, we may assume that 
\begin{align*}
R\subseteq \mathcal{R}_1 = \phantom{\cup\,}&\{(1,1,0,0), (1,0,1,0), (1,0,0,1)\}^2 \\
\cup\, &\{(0,0,1,1), (0,1,0,1), (0,1,1,0)\}^2.
\end{align*}
By \cref{equivrel}, we may assume that $R$ is an equivalence relation. Define the set
\[
S = \{(1,1,0,0), (1,0,1,0), (1,0,0,1)\}.
\]
If $(x,y) \in R$, then $R \subseteq R'\coloneqq R\cup\{((1,1,1,1)-x, (1,1,1,1)-y)\}\in \optset{R}_\Phi$ and $r(R) = r(R')$, and hence we may assume that if $(x,y)\in R$ then also $((1,1,1,1)-x, (1,1,1,1)-y) \in R$ (\cref{subset}).  We thus restrict ourselves to the equivalence relations $R\subseteq \mathcal{R}_1$ of the following three types. In type $(3)$ all three elements of $S$ are equivalent. There is only one such equivalence relation, namely the whole set $\mathcal{R}_1$, which has size 18. In type $(2,1)$, two elements of~$S$ are mutually equivalent and inequivalent to the third element (which is equivalent to itself). Then the size of $R$ is 10. In type $(1,1,1)$, all elements of $S$ are inequivalent. However, this means that $R$ is contained in the diagonal $\Delta_\Phi = \{(x,x) \mid x\in \Phi\}$. Such an $R$ is not feasible. So we are left with the two types $(3)$ and $(2,1)$. For type $(3)$ we have $r(R) = 2$ while for type $(2,1)$ we have $r(R) = 1$. In both cases, the uniform probability distribution~$Q$ on $R$ has marginals $P_1, P_2, P_3, P_4, P_1, P_2, P_3, P_4$, namely uniform distributions on $\{0,1\}$. Therefore, this $Q$ is the optimal~$Q$ and $H(Q) = \log_2 |R|$. Letting $R_{(3)}$ and $R_{(2,1)}$ be any equivalence relations of type $(3)$ and $(2,1)$ respectively, we obtain
\begin{align*}
\combsubomega(D_{(2,2)}) &\geq \min \bigl\{ H(P) - \tfrac{2}{r(R_{(3)})}\bigl(\log_2 |R_{(3)}| - H(P)\bigr),\\[-0.2em]
& \phantom{{}\geq \min \bigl\{{}}H(P) - \tfrac{2}{r(R_{(2,1)})}\bigl(\log_2 |R_{(2,1)}| - H(P)\bigr)\bigr\}\\
&= \min\{{} \log_2 6 - \tfrac{2}{2}(\log_2 18 - \log_2 6),\\
&\phantom{{}= \min \{{}}\log_2 6 - \tfrac{2}{1}(\log_2 10 - \log_2 6)\} \\
&= \min\{1, \log_2\tfrac{54}{25}\} = 1
\end{align*}
On the other hand, $D_{(2,2)}$ has a flattening of rank 2, so also the upper bound $\subomega(D_{(2,2)}) \leq 1$ holds.
\end{proof}

\paragraph{Acknowledgements.} The authors wish to thank Peter Bürgisser for helpful discussions. MC acknowledges financial support from the European Research Council (ERC Grant Agreement no.~337603), the Danish Council for Independent Research (Sapere Aude), and VILLUM FONDEN via the QMATH Centre of Excellence (Grant no.~10059). JZ is supported by NWO through the research programme 617.023.116.

\raggedright
\bibliographystyle{alphaurlpp}
\bibliography{completegraphanddistillation}

\end{document}